\documentclass[11pt,a4paper]{article}
\usepackage[utf8]{inputenc}
\usepackage[T1]{fontenc}
\usepackage{amsmath,amsfonts,amssymb,amsthm}
\usepackage{amsopn}
\DeclareMathOperator{\spec}{spec}
\usepackage{mathtools}
\usepackage{geometry}
\usepackage{algorithm,algpseudocode}
\usepackage{graphicx}
\usepackage{booktabs}
\usepackage{tabularx}
\usepackage{xcolor}
\usepackage{enumitem}
\usepackage{microtype}
\usepackage{subcaption}
\usepackage{multirow}
\usepackage{hyperref}
\usepackage{float}

\theoremstyle{plain}
\newtheorem{theorem}{Theorem}[section]
\newtheorem{lemma}[theorem]{Lemma}
\newtheorem{proposition}[theorem]{Proposition}
\newtheorem{corollary}[theorem]{Corollary}

\theoremstyle{definition}

\newtheorem{assumption}[theorem]{Assumption}
\newtheorem{example}[theorem]{Example}
\newtheorem{counterexample}[theorem]{Counterexample}
\newtheorem{remark}[theorem]{Remark}

\theoremstyle{remark}
\newtheorem{openproblem}{Open Problem}[section]

\DeclareMathOperator{\diag}{diag}
\DeclareMathOperator{\dist}{dist}

\newcommand{\R}{\mathbb{R}}
\newcommand{\N}{\mathbb{N}}

\newcommand{\norm}[1]{\|#1\|}
\newcommand{\inner}[2]{\langle #1, #2 \rangle}

\newcommand{\HH}{\mathcal H}
\DeclareMathOperator{\SOL}{SOL}
\theoremstyle{definition}
\newtheorem{ruleA}[theorem]{Step-size rule}

\begin{document}

\title{\textbf{A Parameter-Free Adaptive Reflected Gradient Method for Monotone Variational Inequalities}}
\author{
Yekini Shehu\thanks{(Corresponding Author) School of Mathematical Sciences, Zhejiang Normal University, Jinhua 321004, China; e-mail: yekini.shehu@zjnu.edu.cn} }
\date{}
\maketitle

\begin{abstract}
\noindent
We analyze a one-call method (one evaluation of $B$ per iteration) for the variational inequality
$\mathrm{VI}(C,B)$ with $B$ monotone and $L$-Lipschitz: the evaluation point
$u_k$ blends the reflected extrapolation with the previous evaluation point
through a two-parameter \emph{filter},
\[
x_{k+1}=P_C\bigl(x_k-\lambda B(u_k)\bigr),\qquad
u_k=x_k+\theta_k(x_k-x_{k-1})+\beta_k(x_k-u_{k-1}),\qquad
\theta_k+\beta_k=1,
\]
so each iteration costs one evaluation of $B$ and one projection.  For a
constant step and a summably decaying filter (i.e., $\sum_k\beta_k<\infty$) we prove weak convergence via a
Lyapunov function whose three dissipation budgets are \emph{exact rationals};
the filter recovers the reflected gradient method of Malitsky (2015) as a
special case.  Beyond this Lyapunov regime, we prove that the reflected method is robust to absolutely summable errors in the operator value, and deduce that the constant-step range extends to the classical constant $\lambda<(\sqrt2-1)/L$ whenever $C$ is bounded; for unbounded $C$ the extension is reduced exactly to an a priori boundedness statement, which we certify up to $\lambda L=0.387$ by dissipation trading --- beyond the Lyapunov--Young barrier of the paper --- and, for affine operators over polyhedral sets, $\dist(x_n,S)\to0$ with $\sum_n\dist^2(x_n,S)<\infty$ below the same threshold, strong convergence when $C$ is bounded (finite dimensions), and $R$-linear rates once the optimal face is identified.  Our main result is an adaptive safeguarded step-size rule that
requires no knowledge of $L$ and no additional evaluations, and is proved
weakly convergent \emph{unconditionally}; the key idea is a data-driven
Lyapunov weight that removes every global Lipschitz constant from the
dissipation budgets ($L$ survives only inside summable drift terms).  For affine operators with $C=\mathcal H$ we determine the sharp
step-size threshold $\lambda L=1/\sqrt3$ for the reflected scheme, matching a rotation lower bound, for arbitrary monotone nonlinear operators in the unconstrained case the same constant is shown to be sharp, and convergence is proved for all trajectories with square-summable operator values; the unconditional convergence at $\lambda<1/(\sqrt3\,L)$ is reduced to a marginal-pole absolute-stability statement, which we formulate explicitly and identify as the one open gap. The projected case remains open.  Under strong monotonicity we
prove $R$-linear convergence with an explicit contraction factor, again with
exact rational budgets, and we exhibit an instance family on which the
certified adaptive rule is provably $\Omega(L)$ times faster than every
certified constant-step method.  Numerical experiments against the
extragradient, forward-backward-forward, reflected-gradient, and Popov
methods confirm the gains of certified adaptivity.

\medskip\noindent\textbf{Key words.} variational inequality; monotone
operator; reflected gradient method; adaptive step size; parameter-free
method; weak and linear convergence.

\medskip\noindent\textbf{AMS subject classifications.} 65K15, 47H05, 49J40,
47J20, 90C33.
\end{abstract}

\section{Introduction}
\subsection{The problem and the oracle model}
We consider the variational inequality
\begin{equation}\label{eq:vi}
\text{find } z\in C \text{ such that } \inner{B(z)}{x-z}\ge0\quad\forall x\in C,
\end{equation}
where $C\subseteq\HH$ is a nonempty closed convex subset of a real Hilbert
space and $B\colon\HH\to\HH$ is monotone and $L$-Lipschitz continuous.  The
solution set is denoted $S=\SOL(C,B)$ and assumed nonempty.  Problem
\eqref{eq:vi} is a unified model for convex--concave saddle-point problems,
complementarity problems, and first-order optimality conditions
\cite{harker1990,facchinei2003}, and its numerical treatment is a classical
subject \cite{korpelevich1976,tseng2000,nemirovski2004,nesterov2007}.

The projected (forward) gradient method, $x_{k+1}=P_C(x_k-\lambda B(x_k))$,
converges only under cocoercivity (or strong monotonicity); under mere
monotonicity it can cycle \cite{malitsky2015}.  Methods that converge
under monotonicity alone therefore modify the operator query.  The
extragradient method (EG) of Korpelevich \cite{korpelevich1976} and the
forward--backward--forward (FBF) method of Tseng \cite{tseng2000} use
\emph{two} evaluations of $B$ per iteration; their $O(1/k)$ ergodic rates
are essentially optimal for the class \cite{nemirovski2004,nesterov2007,ouyang2021}.  A
distinct line, originating with Popov \cite{popov1980} and developed by
Malitsky \cite{malitsky2015} and Malitsky--Tam \cite{malitskytam2020}, achieves
convergence under monotonicity with a \emph{single} evaluation per iteration:
the reflected gradient method (PRG) evaluates $B$ at the reflection
$2x_k-x_{k-1}$, and its value-space sibling (PEG) evaluates at a reflected
\emph{value}.  Table~\ref{tab:oracle} summarizes the per-iteration oracle
cost.  Halving the oracle cost matters whenever $B$ is expensive --- e.g.\
a full gradient through a large-scale model, a resolvent-free splitting
component \cite{davis2017}, or a saddle-point coupling
\cite{chambolle2011,mokhtari2020}.
\begin{table}[H]
\centering 
\footnotesize
\setlength{\tabcolsep}{4pt}
\caption{Per-iteration oracle cost for $L$-Lipschitz monotone VIs
($\mathrm{eval}=$ evaluations of $B$).  The adaptive rule may be run with or
without the growth steps of Rule~\ref{rule:main}.}\label{tab:oracle}
\begin{tabular}{@{}p{3.3cm}p{4.8cm}cp{5.4cm}@{}}
\toprule
method & evaluation point(s) & evals & step restriction\\
\midrule
extragradient \cite{korpelevich1976} & $x_k$, then $y_k$ & $2$ & $\lambda<\tfrac1L$\\
FBF \cite{tseng2000} & $x_k$, then forward--backward correction & $2$ & $\lambda<\tfrac1L$\\
Popov/PEG \cite{popov1980,malitskytam2020} & value reflection & $1$ & $\lambda<\tfrac1{3L}$\\
reflected gradient \cite{malitsky2015} & $2x_k-x_{k-1}$ & $1$ & $\lambda<\tfrac{\sqrt2-1}{L}$\\
aGRAAL \cite{malitsky2020} & golden-ratio extrapolation & $1$ & adaptive; no knowledge of $L$ or diameter\\
\textbf{this paper} & filtered point $u_k$ (memory) & $\mathbf{1}$ & $\lambda<\tfrac1{5L}$ (Thm~\ref{thm:main}); extended to $(\sqrt2-1)/L$ for bounded $C$ and, for affine+polyhedral, up to $\lambda L\le0.387$ (Sec~\ref{sec:ext}); $\varkappa\le\tfrac16$ (adaptive, $L$-free)\\
\bottomrule
\end{tabular}
\end{table}

\subsection{Contributions}
\begin{enumerate}[label=(\roman*),leftmargin=2em,itemsep=1pt]
\item \textbf{A one-call scheme with memory.}  We replace the reflection
$2x_k-x_{k-1}$ by a two-parameter filter $u_k$ that blends the current
extrapolation with the previous evaluation point itself.  The memory smooths
the operator query and, as the numerics show (Section~\ref{sec:numerics}),
allows the \emph{local} Lipschitz geometry to be exploited by an adaptive
step size at no oracle cost.
\item \textbf{A fully transparent Lyapunov proof.}  The weak-convergence proof (Theorem~\ref{thm:main}) is
a self-contained bookkeeping exercise in which every constant is an exact rational: three dissipation channels with budgets $-\tfrac1{45},-\tfrac1{25},-\tfrac6{125}$ at $\lambda L=\tfrac15$,
one channel cancelling \emph{exactly}, and self-contained discrete Gronwall lemmas proved here.The proof specializes to Malitsky's when $\beta_k\equiv0$, and no step is left to continuity heuristics.
\item \textbf{A certified barrier for the proof technique.}  A numerical
search over the full Young-parameter family of this Lyapunov framework yields the best barrier we could exhibit over the family, $\lambda L\approx0.272$ (exactly $5/24$ for the clean splits used in this paper; global optimality over the family is not proved;
Appendix~\ref{app:barrier});
in particular the natural upgrade target $\lambda L=\tfrac{3}{10}$ lies beyond
the barrier.  This delimits exactly what the technique can and cannot prove;
whether a fundamentally different argument (e.g.\ a non-quadratic Lyapunov
function or a performance-estimation certificate) can certify more is left
open.
\item \textbf{Adaptive steps with bounded variation.}  Theorem~\ref{thm:adaptive}
covers \emph{any} step sequence of bounded variation remaining in
$[\lambda_{\min},\lambda_0]$ with $\lambda_0\le\tfrac1{5L}$ --- no
monotonicity is required --- with the variation channel
$\sum_k|\lambda_k-\lambda_{k-1}|$ absorbed by an extended Gronwall lemma;
the data-driven Theorem~\ref{thm:rule-free} removes every sign restriction
from the budget identities, so the growth variant of Rule~\ref{rule:main}
is certified on the same footing as nonincreasing steps
(Corollary~\ref{cor:linear-ad}).
\item \textbf{$L$-free convergence by a data-driven Lyapunov.}
Theorem~\ref{thm:rule-free} proves weak convergence for an
\emph{arbitrary} initial step $\lambda_0$, unconditionally and with no
knowledge of $L$; the key idea is a Lyapunov weight
$2\lambda_k\ell_{k-1}$ driven by the secant slopes, which removes every global
$L$ from the budget coefficients --- $L$ survives only in the summable drift
terms ($\epsilon_k,\kappa_k$) and the vanishing slack $\rho_k$ --- while the
filter's summability disposes of the solution-anchor drift; the statement covers Rule~\ref{rule:main} with or
without growth steps.
\item \textbf{$R$-linear rates.}  Under $\sigma$-strong monotonicity,
Theorem~\ref{thm:linear} proves $\|x_k-z\|^2\le C(1-\lambda\sigma)^k$ with
$\lambda\le\min\{\tfrac1{5L},\tfrac1{32\sigma}\}$ and exact budgets
$-\tfrac1{180},-\tfrac1{15},-\tfrac{13}{24000}$; Corollaries transfer the rate
to adaptive (eventual constant step) and error-bound settings.  On $B=(\sigma I+J)$ the bound is validated against the exactly computable spectral rate (Example~\ref{ex:linear-exact}).
\item \textbf{Sharpness by counterexample.}  Worked examples are
embedded where the corresponding hypotheses appear: no rate uniform over
bounded sets is possible under mere monotonicity, although for every fixed
initial point of that example convergence is strong (Example~\ref{ex:weak-only});
summability $\sum_k\beta_k<\infty$ is essential, not a proof convenience
(Counterexample~\ref{cx:summability}); the uniform constant $\tfrac15$ is
conservative on benign operators (Example~\ref{ex:rotation-threshold});
and the linear-rate certificate tracks the exactly computable spectral rate
(Example~\ref{ex:linear-exact}).
\item \textbf{Sharpness for the nonlinear unconstrained class.}
  Theorem~\ref{thm:nonlinear-sqrt3} shows, for \emph{arbitrary} monotone
  $L$-Lipschitz operators when $C=\mathcal H$, that the rotation witnesses
  divergence for every $\lambda>1/(\sqrt3\,L)$ --- so the affine threshold of
  Theorem~\ref{thm:affine-sqrt3} is best possible within the class --- and
  that every trajectory with $\sum_k\|B(u_k)\|^2<\infty$ converges weakly to
  a solution.  The iteration is the feedback interconnection with linear part
  $G(z)=-\Lambda(2z-1)/(z(z-1))$ and anchored nonlinearity $\varphi(v)=\lambda B(z+v/\Lambda)$; the
  Nyquist locus satisfies $\Re G(e^{i\theta})=\Lambda(1-2\cos\theta)/2\le
  3\Lambda/2$, and the interior-pole circle criterion
  (Lemma~\ref{lem:circle}) certifies square-summability of the operator values
  for every frozen linearization, uniformly over all pole perturbations of the loop to the
  interior of the disk, and for the genuinely nonlinear loop whenever the anchored sector
  inequality $\inner{B(u)}{u-z}\ge\tfrac1L\|B(u)\|^2$ holds (automatic for cocoercive $B$;
  the rotation worst case is covered by Theorem~\ref{thm:affine-sqrt3}).
  The unconditional marginal-pole passage and the validity of the
  anchored sector inequality for arbitrary monotone $B$ are identified explicitly as the open steps.  The projected case remains open.
\item \textbf{Extension of the constant-step range.}  The scheme is exactly Malitsky's reflected method whose operator value at the reflection carries a summable error (Section~\ref{sec:ext}); we prove robustness of the reflected method to such errors (Lemma~\ref{lem:xrobust}), deduce the range $(0,(\sqrt2-1)/L)$ for bounded $C$ (Corollary~\ref{cor:xbounded}), and show that the general unbounded case
is equivalent to a priori boundedness (Remark~\ref{rem:xgap}), which we certify up to $\lambda L=0.387$ by dissipation trading (Theorem~\ref{thm:trading}) --- beyond the Lyapunov--Young barrier of Appendix~\ref{app:barrier} --- and, for affine operators over polyhedral sets, $\dist(x_n,S)\to0$ with $\sum_n\dist^2(x_n,S)<\infty$ below the same threshold, strong convergence when $C$ is bounded (finite dimensions), and $R$-linear rates once the optimal face is identified (Theorem~\ref{thm:xaff}).  A performance-estimation formulation of the remaining boundedness question is recorded in Appendix~\ref{app:barrier}.
\end{enumerate}

\subsection{Related work}
The extragradient method \cite{korpelevich1976} and its descendants
\cite{tseng2000,solodov1999,iusem1997,censor2011} achieve $O(1/k)$ ergodic
rates that match the first-order lower complexity bounds for the class
\cite{nemirovski2004,nesterov2007,nemirovski1983,ouyang2021}; last-iterate
rates without strong monotonicity are subtler and are only understood in
structured settings \cite{daskalakis2019,golowich2020,cai2022}.  One-call methods
trade this rate information for oracle efficiency: Popov \cite{popov1980}
introduced value reflection; Malitsky's PRG \cite{malitsky2015} proved weak
convergence for $\lambda<(\sqrt2-1)/L$, with an $O(1/k)$ rate for a merit
function; Malitsky--Tam \cite{malitskytam2020} gave a one-call
forward--backward--forward, and the golden-ratio method \cite{malitsky2020}
optimized the extrapolation weight for saddle-point problems.  Halpern-type
acceleration improves the residual rate to $O(1/k^2)$ but requires an exact
anchor step \cite{yoon2021}.  Our scheme adds \emph{memory of the evaluation
point} rather than a longer extrapolation; the summability condition on the
filter is shown essential (Counterexample~\ref{cx:summability}), the bounded-variation
step theory of Section~\ref{sec:adaptive} requires no additional evaluations
(Proposition~\ref{prop:rule}), and we report a reproducibly computed ceiling
separating the best barrier we could find over the Lyapunov--Young family
($\lambda L\approx0.272$) from what is provable on the clean splits
($\lambda L=\tfrac15$).  Self-adaptive step sizes that avoid knowing $L$ are classical for the
\emph{two-evaluation} methods --- self-adaptive extragradient and
subgradient-extragradient schemes, and universal/adaptive VI algorithms,
update the step from observed local information but pay for it with an extra
evaluation, an extra projection, or a line search
\cite{malitsky2015,censor2011,ene2022} --- and Malitsky's adaptive variant of
the reflected method likewise needs a second projection per iteration
\cite{malitsky2015}. Two further developments come close to the one-call,
$L$-free regime and must be distinguished from the present work. First,
AdaPEG of Ene--Nguyen \cite{ene2022}, an adaptive version of Popov's past
extragradient, uses a single evaluation per iteration and no line search, but
its step normalization is Adagrad-style with $\eta=\Theta(R)$, so its
analysis requires a \emph{bounded} domain with known diameter $R$, and the
guarantee is an ergodic rate for a weighted average rather than a statement
about the iterates themselves. Second, and most closely related, the adaptive
golden ratio algorithm (aGRAAL) of Malitsky \cite{malitsky2020} uses one
evaluation and one projection per iteration with a fully explicit step driven
by local Lipschitz estimates --- no line search, no knowledge of $L$, and not
even global Lipschitz continuity --- and its proved theory is boundedness of
the iterates together with an ergodic $O(1/k)$ rate (and an $R$-linear rate
under an error-bound condition); no last-iterate convergence certificate is
known for it, and indeed its step sizes admit no uniform positive lower
bound, which is precisely what obstructs the limit passage in the projection
argument (compare Step~7 of Theorem~\ref{thm:adaptive}, where the automatic
floor $\lambda_k\ge\min\{\lambda_0,\varkappa/L\}>0$ does this work).
Theorem~\ref{thm:rule-free} therefore does not claim a new oracle model ---
aGRAAL already showed that one evaluation, one projection, and an $L$-free
explicit step are compatible --- but a different \emph{certificate}: weak
convergence of the iterates themselves for a scheme with evaluation-point
memory and an adaptive step of arbitrary bounded variation, with or without
growth, on a possibly unbounded domain, with exact dissipation budgets
(Table~\ref{tab:budgets}); to our knowledge it is the first certificate of
this kind.  The experiments of Section~\ref{sec:numerics} should be read in
this light: the certified rule is \emph{not} the fastest method on every
instance --- aGRAAL is faster on the stiff rotation problems --- but it is
the one whose iterates carry an unconditional convergence certificate. See
also the $\Omega(L)$ separation of
Theorem~\ref{thm:separation}, which has no analogue among the methods above.
Closely related in spirit, the adaptive extragradient methods of
Antonakopoulos--Belmega--Mertikopoulos \cite{antonakopoulos2021} for min-max
problems lift the bounded-domain restriction through a Bregman/Finsler
geometry change and use two evaluations per iteration.

\medskip\noindent\emph{Organization.}
Section~\ref{sec:prelim} collects preliminaries; Section~\ref{sec:gronwall}
proves the Gronwall lemmas; Section~\ref{sec:constant} states and proves the
main weak-convergence theorem, closing with the examples that calibrate its
hypotheses; Section~\ref{sec:adaptive} treats adaptive steps;
Section~\ref{sec:linear} proves the linear rate; Section~\ref{sec:rule}
analyzes the safeguarded rule; Section~\ref{sec:numerics} reports
experiments; Section~\ref{sec:discussion} discusses limitations and open
problems; Appendix~\ref{app:assembly} assembles the master estimate;
Appendices~\ref{app:tv} and~\ref{app:barrier} prove the slowly-varying
stability lemma and analyze the technique's barrier.
Examples and counterexamples are placed throughout at the points where the
corresponding assumptions are introduced.

\section{Preliminaries}\label{sec:prelim}
Throughout, $\HH$ is a real Hilbert space with inner product
$\inner{\cdot}{\cdot}$ and induced norm $\|\cdot\|$, $C\subseteq\HH$ is
nonempty closed convex, and $P_C$ denotes the metric projection onto $C$.
\begin{assumption}\label{ass:main}
$B\colon\HH\to\HH$ is monotone, $\inner{B(x)-B(y)}{x-y}\ge0$ for all
$x,y\in\HH$, and $L$-Lipschitz, $\|B(x)-B(y)\|\le L\|x-y\|$; the solution
set $S=\SOL(C,B)$ of \eqref{eq:vi} is nonempty.
\end{assumption}
Monotonicity is required on all of $\HH$ because the evaluation point $u_k$
of \eqref{eq:scheme} below need not lie in $C$.

\begin{lemma}[projection calculus]\label{lem:proj}
For $x\in\HH$, $\bar x=P_Cx$ if and only if $\bar x\in C$ and
$\inner{x-\bar x}{y-\bar x}\le0$ for all $y\in C$.  Moreover, for all $y\in C$,
\begin{equation}\label{eq:firm}
\|P_Cx-y\|^2\le\|x-y\|^2-\|x-P_Cx\|^2 .
\end{equation}
\end{lemma}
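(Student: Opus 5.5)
The plan is the classical Hilbert-space argument, in two parts: first the variational characterization of $P_Cx$, then the firm-type inequality \eqref{eq:firm} as a consequence of it.

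\emph{Characterization.} Existence and uniqueness of $P_Cx$ is standard: a minimizing sequence for $y\mapsto\|x-y\|$ over $C$ is Cauchy by the parallelogram law, and its limit lies in $C$ because $C$ is closed and convex. Uniqueness follows from strict convexity of $\|\cdot\|^2$.

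For the forward direction, let $\bar x=P_Cx$ and fix $y\in C$. For $t\in(0,1]$ the point $\bar x+t(y-\bar x)$ lies in $C$ by convexity. Minimality gives
\[
\|x-\bar x\|^2\le\|x-\bar x-t(y-\bar x)\|^2=\|x-\bar x\|^2-2t\inner{x-\bar x}{y-\bar x}+t^2\|y-\bar x\|^2 .
\]
Cancel $\|x-\bar x\|^2$, divide by $t$, and let $t\downarrow0$ to obtain $\inner{x-\bar x}{y-\bar x}\le0$.

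For the converse, suppose $\bar x\in C$ satisfies the inequality, and take any $y\in C$. Then
\[
\|x-y\|^2=\|x-\bar x\|^2+\|\bar x-y\|^2+2\inner{x-\bar x}{\bar x-y}\ge\|x-\bar x\|^2 ,
\]
since the cross term is nonnegative by hypothesis. Hence $\bar x$ is a minimizer, and by uniqueness $\bar x=P_Cx$.

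\emph{Inequality \eqref{eq:firm}.} Write $\bar x=P_Cx$ and take $y\in C$. Expand
\[
\|x-y\|^2=\|(x-\bar x)+(\bar x-y)\|^2=\|x-\bar x\|^2+\|\bar x-y\|^2+2\inner{x-\bar x}{\bar x-y}.
\]
By the characterization just proved, the last term is $\ge0$. Dropping it and rearranging gives $\|\bar x-y\|^2\le\|x-y\|^2-\|x-\bar x\|^2$, which is exactly \eqref{eq:firm}.

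No step is a real obstacle. The only point needing care is the limit $t\downarrow0$ in the forward direction, together with existence of the projection. Both are textbook facts and could simply be cited.
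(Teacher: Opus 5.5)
Your proposal is correct and follows essentially the same route as the paper. The paper cites the first-order optimality condition for the convex function $\|x-\cdot\|^2$ over $C$ and then expands $\|(x-P_Cx)+(P_Cx-y)\|^2$ to get \eqref{eq:firm}; you prove that optimality condition by hand (segment variation for necessity, direct expansion for sufficiency) and then expand the square in the same way.
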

\begin{proof}
$\bar x=P_Cx$ iff $\bar x$ minimizes $\phi(y)=\|x-y\|^2$ over $C$; by
convexity of $\phi$ and the standard first-order condition,
$0\le\inner{\nabla\phi(\bar x)}{y-\bar x}=2\inner{\bar x-x}{y-\bar x}$ for
all $y\in C$, which is the claimed inequality.  For \eqref{eq:firm}: apply the
characterization with $y$ and with $P_Cx$ and expand
$\|x-y\|^2=\|(x-P_Cx)+(P_Cx-y)\|^2$.
\end{proof}
\begin{lemma}[Minty]\label{lem:minty}
$z\in S$ if and only if $\inner{B(y)}{y-z}\ge0$ for all $y\in C$.
\end{lemma}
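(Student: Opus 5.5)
Both directions follow from the variational inequality \eqref{eq:vi}, monotonicity of $B$, and continuity of $B$ (which follows from the Lipschitz assumption in Assumption~\ref{ass:main}). The two implications are handled separately.

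\emph{($\Rightarrow$)} Let $z\in S$ and $y\in C$. Monotonicity gives $\inner{B(y)-B(z)}{y-z}\ge0$, so
\[
\inner{B(y)}{y-z}\ge\inner{B(z)}{y-z}.
\]
The right-hand side is nonnegative by \eqref{eq:vi}, applied with $x=y$. This direction is immediate.

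\emph{($\Leftarrow$)} Assume $\inner{B(y)}{y-z}\ge0$ for all $y\in C$. We use the standard Minty linearization argument.
\begin{itemize}
\item Fix $x\in C$ and $t\in(0,1]$, and set $y_t=z+t(x-z)$. Then $y_t\in C$ by convexity of $C$.
\item Since $y_t-z=t(x-z)$, the hypothesis gives $t\inner{B(y_t)}{x-z}\ge0$. Dividing by $t>0$ yields $\inner{B(y_t)}{x-z}\ge0$.
\item Let $t\downarrow0$. By Lipschitz continuity, $B(y_t)\to B(z)$ in norm, so $\inner{B(z)}{x-z}\ge0$.
\end{itemize}
Since $x\in C$ was arbitrary, $z$ satisfies \eqref{eq:vi}, i.e.\ $z\in S$.

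\emph{Membership of $z$ in $C$.} The construction of $y_t$ needs $z\in C$, so the statement must be read with $z\in C$ in both directions. In the ($\Rightarrow$) direction this is automatic from $z\in S$. In the ($\Leftarrow$) direction it has to be included in the hypothesis.

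There is no genuine obstacle here. The only care required is the limit passage, which uses just hemicontinuity of $B$, and this is weaker than the Lipschitz continuity assumed.
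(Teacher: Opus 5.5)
Your proof is correct and is essentially the paper's argument: monotonicity for the forward direction, and for the converse the segment $y_t=z+t(y-z)$, division by $t>0$, and a continuity limit as $t\downarrow0$. Your remark that $z\in C$ must be part of the hypothesis in the converse is right. The paper uses it silently when asserting $y_t\in C$, and the statement genuinely needs it: for $B\equiv0$ and $C\neq\HH$, every $z\notin C$ satisfies $\inner{B(y)}{y-z}\ge0$ for all $y\in C$ but lies outside $S=C$.
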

\begin{proof}
If $z\in S$, monotonicity gives $\inner{B(y)}{y-z}\ge\inner{B(z)}{y-z}\ge0$.
Conversely, fix $y\in C$ and set $y_t=z+t(y-z)\in C$, $t\in(0,1]$; then
$0\le\inner{B(y_t)}{y_t-z}=t\inner{B(y_t)}{y-z}$, so
$\inner{B(y_t)}{y-z}\ge0$, and $t\downarrow0$ gives $\inner{B(z)}{y-z}\ge0$
by continuity of $B$.
\end{proof}
\begin{lemma}[Opial]\label{lem:opial}
If $x_k\rightharpoonup x$, then $\liminf_k\|x_k-x\|<\liminf_k\|x_k-y\|$ for
every $y\neq x$.
\end{lemma}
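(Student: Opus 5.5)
The plan is to expand squared norms around the weak limit and compare them with the squared norms around $y$. Write $d_k=x_k-x$, so that $d_k\rightharpoonup0$. For any fixed $y\neq x$ we expand
\[
\|x_k-y\|^2=\|d_k+(x-y)\|^2=\|x_k-x\|^2+2\inner{d_k}{x-y}+\|x-y\|^2 .
\]
The cross term $\inner{d_k}{x-y}$ tends to $0$ by the definition of weak convergence, tested against the fixed vector $x-y$. The constant term $\|x-y\|^2$ is strictly positive because $y\neq x$.

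Next take the $\liminf$ of both sides. Since the cross term converges, it passes through the $\liminf$ as an additive constant, and we obtain
\[
\liminf_k\|x_k-y\|^2=\liminf_k\|x_k-x\|^2+\|x-y\|^2 .
\]
Both $\liminf$'s are finite, because a weakly convergent sequence is bounded (uniform boundedness principle). The right-hand side is therefore strictly larger than $\liminf_k\|x_k-x\|^2$.

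Finally, pass from squares to norms. The map $t\mapsto t^2$ is continuous and increasing on $[0,\infty)$, so $\liminf_k\|x_k-x\|^2=(\liminf_k\|x_k-x\|)^2$, and similarly for $y$. The strict inequality between the squares then gives the strict inequality between the norms, which is the claim.

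The only step that needs care is justifying that the convergent cross term can be pulled out of the $\liminf$, together with the finiteness of the $\liminf$'s; the latter is what the boundedness of the sequence supplies. Neither step is a genuine obstacle.
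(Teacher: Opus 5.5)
Your proof is correct and is the same argument as the paper's: expand $\|x_k-y\|^2$ around the weak limit $x$, let the cross term vanish, and use $\|x-y\|^2>0$ after taking $\liminf$. Your extra remarks — finiteness of the $\liminf$ via boundedness of weakly convergent sequences, and the passage from squares to norms — supply details the paper leaves implicit, and the strict inequality does need the finiteness.
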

\begin{proof}
$\|x_k-y\|^2=\|x_k-x\|^2+\|x-y\|^2+2\inner{x_k-x}{x-y}$, and
$\inner{x_k-x}{x-y}\to0$.  Taking $\liminf$ on both sides and using
$\|x-y\|^2>0$ gives $\liminf_k\|x_k-y\|^2=\|x-y\|^2+\liminf_k\|x_k-x\|^2
>\liminf_k\|x_k-x\|^2$.
\end{proof}

\section{Two Gronwall lemmas}\label{sec:gronwall}
The boundedness arguments below consume recursions of the form
$r_k^2\le A+\gamma r_{k-1}+\sum_{j<k}\epsilon_j r_{j-1}^2$.  The constant-step
analysis needs summable weights only; the adaptive extension adds a
summable constant channel.
\begin{lemma}[discrete Gronwall with a linear term]\label{lem:gronwall}
Let $(r_k)_{k\ge0}\subset[0,\infty)$, $(\epsilon_k)_{k\ge0}\subset[0,\infty)$
with $E:=\sum_{k\ge0}\epsilon_k<\infty$, and $\gamma,A\ge0$.  If
\begin{equation}\label{eq:gronrec}
r_k^2\ \le\ A+\gamma\, r_{k-1}+\sum_{j<k}\epsilon_j\, r_{j-1}^2
\qquad(k\ge1),
\end{equation}
(We adopt the convention $r_{-1}=0$; equivalently, the $j=0$
summand $\epsilon_0 r_{-1}^2$ may be absorbed into $A$.  In the
applications below $x_{-1}=x_0$, so $r_{-1}=r_0$ is in fact defined.)
then $\sup_k r_k<\infty$.
\end{lemma}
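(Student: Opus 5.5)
The idea is to control the running maximum $M_k:=\max_{0\le j\le k} r_j$ and to split the sequence at an index beyond which the tail of $\sum\epsilon_j$ is small. Summability gives an index $N$ with $\sum_{j\ge N}\epsilon_j\le\tfrac12$. The first $N$ terms $r_0,\dots,r_N$ are finitely many, so they are bounded by some $M_N<\infty$ (with the convention $r_{-1}=0$, or $r_{-1}=r_0$). This constant $M_N$ is the only non-explicit quantity, and that is harmless.

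For $k> N$, I would use $r_{j-1}\le M_{k-1}$ for $j<k$ in \eqref{eq:gronrec} and split the sum at $N$:
\[
r_k^2\le A+\gamma M_{k-1}+\sum_{j<N}\epsilon_j r_{j-1}^2+\sum_{N\le j<k}\epsilon_j r_{j-1}^2
\le A'+\gamma M_{k-1}+\tfrac12 M_{k-1}^2,
\]
where $A':=A+E\,M_N^2$. Since $M_k=\max\{M_{k-1},r_k\}$ and $A'+\gamma M_{k-1}+\tfrac12M_{k-1}^2\ge M_{k-1}^2$ fails in general, I would phrase the bound directly for $M_k^2$ instead. Taking the maximum over $N< i\le k$ and also over indices $\le N$ gives
\[
M_k^2\le \max\{M_N^2,\;A'+\gamma M_k+\tfrac12 M_k^2\}.
\]
Each $r_i$ with $N<i\le k$ satisfies $r_i^2\le A'+\gamma M_{i-1}+\tfrac12M_{i-1}^2$, and this right-hand side is monotone in $M$.

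In the second case I get $\tfrac12M_k^2\le A'+\gamma M_k$, so $M_k\le \gamma+\sqrt{\gamma^2+2A'}$. In either case
\[
M_k\le\max\{M_N,\ \gamma+\sqrt{\gamma^2+2A'}\},
\]
which is uniform in $k$, and therefore $\sup_k r_k<\infty$.

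The main obstacle is the quadratic self-reference: a naive induction $r_k^2\le A+\gamma r_{k-1}+E\sup r^2$ only closes if $E<1$. The tail splitting (choosing $N$ so the tail is at most $\tfrac12$) is exactly what absorbs the quadratic term. I also need to be careful that the absorption is done for the running maximum $M_k$, which appears on both sides, rather than for $r_k$; the other steps are routine.
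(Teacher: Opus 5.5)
Your proof is correct and follows essentially the paper's route. Both arguments choose a tail index where $\sum_j\epsilon_j$ is small, bound the finitely many head terms by a constant, and absorb the quadratic self-reference into a running maximum. The only real difference is the linear term: you keep it and solve $M_k^2-2\gamma M_k-2A'\le0$, as the paper itself does in Lemma~\ref{lem:gronU}, whereas the paper removes it with $\gamma r_{k-1}\le\tfrac14 r_{k-1}^2+\gamma^2$ and iterates a contraction in $S_k=\max_{j\le k}r_j^2$.

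Your formulation $M_k^2\le\max\{M_N^2,\,A'+\gamma M_k+\tfrac12M_k^2\}$ also handles the running maximum more carefully than the paper's displayed recursion $S_k\le A+C_J+\gamma^2+\tfrac12S_{k-1}$. That recursion strictly needs a $\max$ with $S_{k-1}$, although the paper's final bound is unaffected.
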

\begin{proof}
Set $S_n:=\max_{0\le j\le n}r_j^2$.  Choose $J$ such that
$\sum_{j\ge J}\epsilon_j\le\tfrac14$ (possible since $\sum_j\epsilon_j<\infty$)
and put $C_J:=\sum_{j<J}\epsilon_j\, r_{j-1}^2<\infty$.  For $k>J$,
\eqref{eq:gronrec} gives, using $\gamma r_{k-1}\le\tfrac14 r_{k-1}^2+\gamma^2$,
\[
r_k^2\le A+C_J+\gamma^2+\tfrac14 S_{k-1}+\bigl(\tfrac14\bigr)S_{k-1}
=A+C_J+\gamma^2+\tfrac12 S_{k-1}.
\]
Hence $S_k\le A+C_J+\gamma^2+\tfrac12 S_{k-1}$ for all $k>J$, and iterating,
$S_n\le 2\bigl(A+C_J+\gamma^2\bigr)+S_J$ for all $n>J$.
\end{proof}

\begin{lemma}[Gronwall with linear term, summable weights and constants]
\label{lem:gronwall2}
Let $(r_k)\subset[0,\infty)$, $(\epsilon_k),(\kappa_k)\subset[0,\infty)$ with
$E:=\sum_k\epsilon_k<\infty$ and $K:=\sum_k\kappa_k<\infty$, and $\gamma,A\ge0$.
If, for all $k\ge1$,
\begin{equation}\label{eq:gronrec2}
r_k^2\le A+\gamma\, r_{k-1}+\sum_{j<k}\bigl(\epsilon_j\, r_{j-1}^2+\kappa_j\bigr),
\end{equation}
(same convention $r_{-1}=0$)
then $\sup_k r_k<\infty$.
\end{lemma}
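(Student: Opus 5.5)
The simplest route is to reduce Lemma~\ref{lem:gronwall2} to Lemma~\ref{lem:gronwall}. Since all $\kappa_j\ge0$, the constant channel is bounded uniformly in $k$:
\[
\sum_{j<k}\kappa_j\le K \qquad\text{for every } k\ge1 .
\]
Substituting this into \eqref{eq:gronrec2} gives, for all $k\ge1$,
\[
r_k^2\le (A+K)+\gamma\,r_{k-1}+\sum_{j<k}\epsilon_j\,r_{j-1}^2 .
\]
This is exactly the recursion \eqref{eq:gronrec}, with $A$ replaced by the constant $A':=A+K\ge0$. The sequences $(r_k)$ and $(\epsilon_k)$, the summability $E<\infty$, the constant $\gamma\ge0$ and the convention $r_{-1}=0$ are all unchanged. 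Lemma~\ref{lem:gronwall} therefore applies verbatim and yields $\sup_k r_k<\infty$.

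To keep the proof self-contained, one can instead repeat the argument of Lemma~\ref{lem:gronwall} with $A+K$ in place of $A$. Fix $J$ with $\sum_{j\ge J}\epsilon_j\le\tfrac14$, and set $C_J=\sum_{j<J}\epsilon_j r_{j-1}^2$, which is a finite sum. Put $S_n=\max_{0\le j\le n} r_j^2$. For $k>J$, bound the linear term by $\gamma r_{k-1}\le\tfrac14 r_{k-1}^2+\gamma^2$ and the tail of the weighted sum by $\tfrac14 S_{k-1}$. This gives
\[
S_k\le A+K+C_J+\gamma^2+\tfrac12 S_{k-1} \qquad (k>J).
\]
Iterating this recursion shows that $S_n$ stays below $2(A+K+C_J+\gamma^2)+S_J$ for all $n>J$.

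There is no real obstacle here. The only point to check is that the $\kappa$-channel enters additively and carries no dependence on $r$, so it can be absorbed into the constant. This uses only $\kappa_j\ge0$ and $K<\infty$.
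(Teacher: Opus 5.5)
Your proposal is correct and is essentially the paper's proof: the paper also sets $A'=A+K$ (using $\kappa_j\ge0$, $K<\infty$) and reruns the argument of Lemma~\ref{lem:gronwall} with $A'$ in place of $A$, so your direct reduction to that lemma is a cleaner packaging of the same idea. One correction to your self-contained version: the displayed recursion should be stated for $r_k^2$ rather than $S_k$. As written it fails whenever $S_{k-1}>2(A+K+C_J+\gamma^2)$, since $S_k\ge S_{k-1}$. The correct form $S_k\le\max\{S_{k-1},\,A+K+C_J+\gamma^2+\tfrac12S_{k-1}\}$ gives the same bound $2(A+K+C_J+\gamma^2)+S_J$ by induction.
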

\begin{proof}
Set $S_n:=\max_{j\le n}r_j^2$ and $A':=A+K$.  Since $\epsilon_k\to0$, pick $J$
with $\sum_{j\ge J}\epsilon_j\le\tfrac14$ and put
$C_J:=\sum_{j<J}\bigl(\epsilon_jr_{j-1}^2+\kappa_j\bigr)<\infty$.  For $k>J$,
using $\gamma r_{k-1}\le\tfrac14 r_{k-1}^2+\gamma^2$,
\[
r_k^2\le A'+C_J+\gamma^2+\tfrac14S_{k-1}+\tfrac14S_{k-1}
=A'+C_J+\gamma^2+\tfrac12S_{k-1},
\]
so $S_n\le2\bigl(A'+C_J+\gamma^2\bigr)+S_J$ for all $n>J$.
\end{proof}

\section{Constant step size: main result and proof}\label{sec:constant}
Algorithm~\ref{alg:main} summarizes the method in implementable form.

\begin{algorithm}[H]
\caption{Decaying-filter reflected gradient method (one evaluation, one projection per iteration)}\label{alg:main}
\begin{algorithmic}[1]
\Require $x_0\in C$; $x_{-1}=x_0$; $u_0=x_0$; filter $(\beta_k)\subset[0,\tfrac14]$ with $\sum_k\beta_k<\infty$; \textbf{either} a constant step $\lambda\in(0,\tfrac1{5L})$ \textbf{or} Rule~\ref{rule:main} with $(\lambda_k)$, $\lambda_{-1}=\lambda_0>0$, growth factors $\omega_k\ge0$ with $\sum_k\omega_k<\infty$.
\For{$k=0,1,2,\dots$}
\State $\theta_k\leftarrow1-\beta_k$;\quad $d_k\leftarrow x_k-x_{k-1}$
\State $u_k\leftarrow x_k+\theta_k d_k+\beta_k(x_k-u_{k-1})$ \Comment{filtered evaluation point}
\State $g_k\leftarrow B(u_k)$ \Comment{the single evaluation of $B$}
\If{Rule~\ref{rule:main} mode}
\State $\lambda_k\leftarrow\min\{\lambda_0,\ \lambda_{k-1}(1+\omega_k)\}$ \Comment{growth step; inactive if $\omega\equiv0$}
\State $\lambda_k\leftarrow\min\{\lambda_k,\ \varkappa/\ell_{k-1}\}$ with $\ell_{k-1}=\|g_k-g_{k-1}\|/\|u_k-u_{k-1}\|$ ($\ell_{k-1}:=0$ if $u_k=u_{k-1}$) \Comment{safeguard cut; update the step \emph{before} using it}
\Else
\State $\lambda_k\equiv\lambda$ \Comment{constant-step mode}
\EndIf
\State $x_{k+1}\leftarrow P_C\bigl(x_k-\lambda_k g_k\bigr)$ \Comment{one projection}
\EndFor
\end{algorithmic}
\end{algorithm}

\begin{theorem}\label{thm:main}
Under Assumption~\ref{ass:main}, let $(\beta_k)$ satisfy
$0\le\beta_k\le\tfrac14$ and $\sum_k\beta_k<\infty$, set
$\theta_k=1-\beta_k$, fix $x_{-1}=x_0=u_0\in C$, let
$\lambda\in(0,\tfrac{1}{5L})$, and define
\begin{equation}\label{eq:scheme}
x_{k+1}=P_C\bigl(x_k-\lambda B(u_k)\bigr),\qquad
u_k=x_k+\theta_k(x_k-x_{k-1})+\beta_k(x_k-u_{k-1}).
\end{equation}
Then $(x_k)$ converges weakly to a point of $S$, with
$\|x_{k+1}-x_k\|\to0$ and $\|u_k-x_k\|\to0$.
\end{theorem}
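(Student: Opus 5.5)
The proof will adapt Malitsky's Lyapunov argument for the reflected gradient method, treating the filter term as a perturbation that is summable because $\sum_k\beta_k<\infty$.

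\emph{Evaluation point as a perturbed reflection.} Since $\theta_k=1-\beta_k$,
\[
u_k=(2x_k-x_{k-1})+\beta_k e_k,\qquad e_k:=x_{k-1}-u_{k-1}.
\]
So $u_k$ is Malitsky's reflected point plus a drift $\beta_k e_k$. Inductively, $\|e_{k+1}\|=\|x_k-u_k\|\le\|d_k\|+\beta_k\|e_k\|$ with $\beta_k\le\tfrac14$. Hence $\|u_k-x_k\|$ is controlled by a geometric sum of past increments $\|d_j\|$ plus the drift.

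\emph{Basic one-step estimate.} Fix $z\in S$ and apply \eqref{eq:firm} with $y=z$ to $x_{k+1}=P_C(x_k-\lambda B(u_k))$:
\[
\|x_{k+1}-z\|^2\le\|x_k-z\|^2-\|x_{k+1}-x_k\|^2-2\lambda\inner{B(u_k)}{x_{k+1}-z}.
\]
I split $x_{k+1}-z=(u_k-z)+(x_{k+1}-u_k)$.
\begin{itemize}
\item Monotonicity together with $z\in S$ gives $\inner{B(u_k)}{u_k-z}\ge\inner{B(z)}{u_k-z}$. When $u_k\notin C$ this is not automatically nonnegative. Writing $u_k-z=(x_k-z)+d_k+\beta_k e_k$, the piece $x_k-z$ has a good sign. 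The $d_k$ piece is handled as in Malitsky, via the projection inequality at step $k$ tested with $y=x_{k+1}$.
\item The remaining term $\inner{B(u_k)}{x_{k+1}-u_k}$ is handled through the previous step's projection inequality, $\inner{x_k-x_{k-1}+\lambda B(u_{k-1})}{x_{k+1}-x_k}\le0$. This converts $\lambda\inner{B(u_k)}{x_{k+1}-2x_k+x_{k-1}}$ into $\lambda\inner{B(u_k)-B(u_{k-1})}{x_{k+1}-x_k}$ plus inner products of increments.
\item That term is bounded by Lipschitz continuity and Young's inequality, using $\|u_k-u_{k-1}\|\le 2\|d_k\|+\|d_{k-1}\|+\beta_k\|e_k\|+\beta_{k-1}\|e_{k-1}\|$.
\end{itemize}

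\emph{Lyapunov function.} I will aim for
\[
\Phi_k=\|x_k-z\|^2+a\|x_k-x_{k-1}\|^2+2\lambda\inner{B(u_{k-1})-B(z)}{x_k-z}\ \ (\text{or } b\|x_k-u_{k-1}\|^2).
\]
The target inequality is $\Phi_{k+1}\le(1+c\beta_k)\Phi_k+c\beta_k-\delta\bigl(\|d_{k+1}\|^2+\|d_k\|^2+\|x_{k+1}-u_k\|^2\bigr)$.
\begin{itemize}
\item The three negative channels come from fixing Young splits at $\lambda L=\tfrac15$. Their coefficients are rational numbers, which I would check to be strictly negative for every $\lambda L<\tfrac15$.
\item The $\beta_k$ cross terms are bounded by $\beta_k(\|x_k-z\|^2+\|e_k\|^2+1)$. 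Here $\|e_k\|\le\|x_{k-1}-z\|+\|u_{k-1}-z\|$, so these terms have the form $\epsilon_j r_{j-1}^2$ plus a linear term.
\end{itemize}
Lemma~\ref{lem:gronwall} then gives $\sup_k\Phi_k<\infty$, so the iterates are bounded. Summing the dissipation gives $\sum_k\|d_k\|^2<\infty$, hence $\|x_{k+1}-x_k\|\to0$. The recursion for $e_k$ together with $\beta_k\to0$ gives $\|u_k-x_k\|\to0$. A standard quasi-Fejér argument (summable perturbations) then shows that $\lim_k\Phi_k$ exists, and hence $\lim_k\|x_k-z\|$ exists for every $z\in S$.

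\emph{Identifying the limit.} Take a weak cluster point $\bar x$ along $x_{k_j}$. For any $y\in C$, the projection inequality gives
\[
\inner{x_k-\lambda B(u_k)-x_{k+1}}{y-x_{k+1}}\le0.
\]
Monotonicity yields $\inner{B(y)}{y-u_k}\ge\inner{B(u_k)}{y-u_k}$. Pass to the limit using $\|x_{k+1}-x_k\|\to0$, $\|u_k-x_{k+1}\|\to0$, and the boundedness of $B(u_k)$ (which follows from Lipschitz continuity). This gives $\inner{B(y)}{y-\bar x}\ge0$, so Lemma~\ref{lem:minty} gives $\bar x\in S$. Opial's Lemma~\ref{lem:opial}, combined with existence of $\lim_k\|x_k-z\|$ for all $z\in S$, yields weak convergence of the whole sequence.

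\emph{Main obstacle.} The delicate step is the bookkeeping in the Lyapunov step. One must choose Young weights so that the Lipschitz term $\lambda L\|u_k-u_{k-1}\|\|d_{k+1}\|$, which now involves three increments and the drift, is fully absorbed at $\lambda L<\tfrac15$. At the same time, the leftover $\beta$-terms must be kept in the form $\epsilon_j r_{j-1}^2$ required by the Gronwall lemma. I would first fix the weight $a$ so that the $\|d_k\|^2$ channel cancels exactly, and then tune the remaining two splits.
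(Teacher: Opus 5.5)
Your architecture matches the paper's: firm nonexpansiveness, the previous-step projection inequality, Lipschitz/Young, a Lyapunov function plus Gronwall, Minty, Opial. But the step you defer as the ``main obstacle'' is where the proposal breaks, and the problem is structural rather than a matter of tuning Young weights.

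Your own decomposition of $u_k-z$ produces the anchor drift $-2\lambda\inner{B(z)}{d_k}$. When $B(z)\neq0$ this term is unsigned and first order in $\|d_k\|$, so it is not summable even once $\sum_k\|d_k\|^2<\infty$ is known. No projection or variational inequality gives it a sign; the projection inequality defining $x_k$ involves $B(u_{k-1})$, not $B(z)$. It can only be removed by telescoping, $-2\lambda\inner{B(z)}{d_k}=-2\lambda\inner{B(z)}{x_k-z}+2\lambda\inner{B(z)}{x_{k-1}-z}$. This forces the correction $+2\lambda\inner{B(z)}{x_{k-1}-z}$, which is nonnegative since $x_{k-1}\in C$, into the Lyapunov function. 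Add the memory channel $2\lambda L\|x_k-u_{k-1}\|^2$, weighted to cancel exactly the cross term from splitting $\|u_k-u_{k-1}\|^2$, and you have the paper's $a_k$. Note that it is this memory channel, not $\|d_k\|^2$, that should cancel: $\|d_k\|^2$ must keep a strictly negative budget.

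Your term $2\lambda\inner{B(u_{k-1})-B(z)}{x_k-z}$ does not do this job.
\begin{itemize}
\item Its $B(z)$-part, $-2\lambda\inner{B(z)}{x_k-z}$, has the wrong sign and index. An unsigned first-order term $-2\lambda\inner{B(z)}{x_{k+1}-x_{k-1}}$ therefore remains in $\Phi_{k+1}-\Phi_k$.
\item Its $B(u_{k-1})$-part, $2\lambda\inner{B(u_{k-1})}{x_k-z}$, is unsigned and quadratic in the iterates, and your sketch never accounts for its increment.
\end{itemize}
So the target one-step inequality does not follow from the steps you describe. Boundedness, $\sum_k\|d_k\|^2<\infty$, and everything downstream rest on an estimate that is missing.

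Three further points need repair even after the Lyapunov function is fixed.

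First, the projection inequality has the wrong sign. From $x_k=P_C(x_{k-1}-\lambda B(u_{k-1}))$ one gets $\inner{d_k+\lambda B(u_{k-1})}{y-x_k}\ge0$ for $y\in C$, and you need it at both $y=x_{k+1}$ and $y=x_{k-1}$. Their sum gives $2\lambda\inner{B(u_{k-1})}{d_k-d_{k+1}}\le\|d_{k+1}\|^2-\|d_k\|^2-\|d_{k+1}-d_k\|^2$. The Lipschitz term therefore pairs $B(u_k)-B(u_{k-1})$ with the second difference $d_{k+1}-d_k$, not with $x_{k+1}-x_k$. With $y=x_{k+1}$ alone, $\lambda\inner{B(u_k)}{d_k}$ is left over.

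Second, ``$\lim\Phi_k$ exists, hence $\lim\|x_k-z\|$ exists'' is not immediate, because the correction $2\lambda\inner{B(z)}{x_{k-1}-z}$ need not vanish a priori. The paper handles this with a modified Opial chain for $\Phi_k(z)=\|x_k-z\|^2+2\lambda\inner{B(z)}{x_{k-1}-z}$. Alternatively, use that $\inner{B(z)}{\bar x-z}=0$ for all $z,\bar x\in S$, so the correction tends to $0$ once every weak cluster point is known to lie in $S$.

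Third, your bound $\|e_k\|\le\|x_{k-1}-z\|+\|u_{k-1}-z\|$ is circular, since $u_{k-1}-z$ contains $e_k$. Use the recursion instead: $\|x_k-u_k\|\le(1-\bar\beta)^{-1}\max_{j\le k}\|d_j\|$.

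With that bound and the corrected Lyapunov function, your crude treatment of the filter terms does work. All of them are $O\bigl(\beta_k(1+\max_{j\le k}\|x_j-z\|^2)\bigr)$, and a running-max form of Lemma~\ref{lem:gronwall} (its proof already supports this) closes the boundedness argument. This route is simpler than the paper's monotonicity drop $\inner{B(u_{k-1})-B(x_{k-1})}{e_{k-1}}\ge0$ and its exact cancellation of $s_{k-1}$.
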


\begin{proof}
Fix $z\in S$ and write
\[
d_k:=x_k-x_{k-1},\qquad e_k:=u_k-x_k,\qquad
s_k:=\inner{B(z)}{e_k},\qquad \delta_k:=\inner{B(z)}{d_k},\qquad
r_k:=\|x_k-z\|.
\]
Note $\bar\beta:=\sup_k\beta_k\le\tfrac14$ and, on the ray $\theta_k+\beta_k=1$,
\begin{equation}\label{eq:rec}
e_k=\theta_kd_k+\beta_k(d_k-e_{k-1})=d_k-\beta_ke_{k-1},
\qquad x_k-u_{k-1}=d_k-e_{k-1}.
\end{equation}

\medskip\noindent\emph{Step 1 (fundamental estimate).}
Applying \eqref{eq:firm} with $x=x_k-\lambda B(u_k)$ and expanding (the
$\lambda^2\|B(u_k)\|^2$ terms cancel in the difference
$\|x_k-z-\lambda B(u_k)\|^2-\|d_{k+1}+\lambda B(u_k)\|^2
=\|x_k-z\|^2-\|d_{k+1}\|^2-2\lambda\inner{B(u_k)}{x_{k+1}-z}$ for
$x_k+d_{k+1}=x_{k+1}$), then
adding the nonnegative quantity
$2\lambda\inner{B(u_k)-B(z)}{u_k-z}$ (monotonicity, Assumption~\ref{ass:main}),
\begin{align}
\|x_{k+1}-z\|^2
&\le\|x_k-z\|^2-\|d_{k+1}\|^2-2\lambda\inner{B(u_k)}{x_{k+1}-z}\notag\\
&\le\|x_k-z\|^2-\|d_{k+1}\|^2
+2\lambda\inner{B(u_k)}{u_k-x_{k+1}}-2\lambda\inner{B(z)}{u_k-z}.
\label{eq:step1}
\end{align}

\medskip\noindent\emph{Step 2 (slot decomposition and projection identity).}
By \eqref{eq:rec}, $u_k=x_k+d_k-\beta_ke_{k-1}$, hence
\begin{equation}\label{eq:slot}
u_k-x_{k+1}=(x_k+d_k-x_{k+1})-\beta_ke_{k-1}.
\end{equation}
Since $x_k=P_C(x_{k-1}-\lambda B(u_{k-1}))$, Lemma~\ref{lem:proj} with
$y=x_{k+1}\in C$ and $y=x_{k-1}\in C$ gives
$\inner{d_k+\lambda B(u_{k-1})}{x_k-x_{k+1}}\le0$ and
$\inner{d_k+\lambda B(u_{k-1})}{d_k}\le0$; adding,
\[
\lambda\inner{B(u_{k-1})}{x_k+d_k-x_{k+1}}\le\inner{d_k}{x_{k+1}-x_k-d_k}
=\inner{d_k}{d_{k+1}-d_k}.
\]
By the cosine identity $2\inner{a}{b}=\|a+b\|^2-\|a\|^2-\|b\|^2$ with $a=d_k$,
$b=d_{k+1}-d_k$,
\begin{equation}\label{eq:projid}
2\lambda\inner{B(u_{k-1})}{x_k+d_k-x_{k+1}}
\le\|d_{k+1}\|^2-\|d_k\|^2-\|d_{k+1}-d_k\|^2 .
\end{equation}
Substituting \eqref{eq:slot} into \eqref{eq:step1} via
$B(u_k)=B(u_k)-B(u_{k-1})+B(u_{k-1})$ and using \eqref{eq:projid}, the
terms $\pm\|d_{k+1}\|^2$ cancel:
\begin{equation}\label{eq:step2}
\|x_{k+1}-z\|^2\le\|x_k-z\|^2-\|d_k\|^2-\|d_{k+1}-d_k\|^2
+\mathrm{T}_1-2\lambda\beta_k\inner{B(u_{k-1})}{e_{k-1}}
-2\lambda\inner{B(z)}{u_k-z},
\end{equation}
where $\mathrm{T}_1:=2\lambda\inner{B(u_k)-B(u_{k-1})}{u_k-x_{k+1}}$.

\medskip\noindent\emph{Step 3 (bounds).}
Using $u_k-u_{k-1}=e_k+(x_k-u_{k-1})$ with $2XY\le X^2+Y^2$,
\begin{equation}\label{eq:splitA}
\|u_k-u_{k-1}\|^2\le2\|e_k\|^2+2\|x_k-u_{k-1}\|^2,
\end{equation}
and, from \eqref{eq:slot},
\begin{equation}\label{eq:splitB}
\|u_k-x_{k+1}\|^2\le\tfrac85\|d_{k+1}-d_k\|^2+\tfrac16\|e_{k-1}\|^2,
\end{equation}
since $(1+t')=\tfrac85$, $(1+1/t')\bar\beta^2=\tfrac83\cdot\tfrac1{16}=\tfrac16$
at $t'=\tfrac35$, $\bar\beta=\tfrac14$.  Hence, by
$2\inner{B(u_k)-B(u_{k-1})}{u_k-x_{k+1}}\le
L\bigl(\|u_k-u_{k-1}\|^2+\|u_k-x_{k+1}\|^2\bigr)$,
\begin{equation}\label{eq:T1}
\mathrm{T}_1\le\lambda L\Bigl[2\|e_k\|^2+\tfrac85\|d_{k+1}-d_k\|^2
+2\|x_k-u_{k-1}\|^2+\tfrac16\|e_{k-1}\|^2\Bigr].
\end{equation}
For the filter term, split $B(u_{k-1})=[B(u_{k-1})-B(x_{k-1})]+[B(x_{k-1})-B(z)]+B(z)$:
by monotonicity $\inner{B(u_{k-1})-B(x_{k-1})}{e_{k-1}}\ge0$, and
$\inner{B(z)}{e_{k-1}}=s_{k-1}$, so with $\alpha=25$,
\begin{equation}\label{eq:T2}
-2\lambda\beta_k\inner{B(u_{k-1})}{e_{k-1}}
\le2\lambda\beta_kL\,r_{k-1}\|e_{k-1}\|-2\lambda\beta_ks_{k-1}
\le\lambda\beta_kL\bigl(25\,r_{k-1}^2+\tfrac1{25}\|e_{k-1}\|^2\bigr)-2\lambda\beta_ks_{k-1}.
\end{equation}
Since $u_k=x_k+d_k-\beta_ke_{k-1}$,
\begin{equation}\label{eq:tele}
-2\lambda\inner{B(z)}{u_k-z}=-2\lambda\inner{B(z)}{x_k-z}-2\lambda\delta_k
+2\lambda\beta_ks_{k-1},
\end{equation}
and the $s_{k-1}$ terms in \eqref{eq:T2} and \eqref{eq:tele} cancel \emph{exactly}.

\medskip\noindent\emph{Step 4 (Lyapunov recursion with exact budgets).}
Define
\begin{equation}\label{eq:lyap}
a_k:=\|x_k-z\|^2+2\lambda L\|x_k-u_{k-1}\|^2+2\lambda\inner{B(z)}{x_{k-1}-z}+\tfrac13\|e_{k-1}\|^2.
\end{equation}
Inserting \eqref{eq:step2}--\eqref{eq:tele}, using
$\|x_{k+1}-u_k\|^2=\|d_{k+1}-d_k+\beta_ke_{k-1}\|^2
\le\tfrac85\|d_{k+1}-d_k\|^2+\tfrac16\|e_{k-1}\|^2$ (the same split as
\eqref{eq:splitB}), the recursion \eqref{eq:rec} in the form
$\|e_k\|^2\le\tfrac43\|d_k\|^2+\tfrac14\|e_{k-1}\|^2$ (i.e.\ $s=\tfrac13$:
$(1+s)=\tfrac43$, $(1+1/s)\bar\beta^2=4\cdot\tfrac1{16}=\tfrac14$),
and cancelling the $2\lambda\delta_k$ against the
$\inner{B(z)}{x_{k-1}-z}$ difference in $a_{k+1}-a_k$, we obtain
\begin{equation}\label{eq:master}
a_{k+1}\le a_k-b_k+\epsilon_k r_{k-1}^2,
\end{equation}
with drift coefficient $\epsilon_k:=25\lambda L\,\beta_k$
(summable: $\sum_k\epsilon_k=25\lambda L\sum_k\beta_k<\infty$) and
\begin{equation}\label{eq:bk}
b_k:=\tfrac1{45}\|d_k\|^2+\tfrac1{25}\|d_{k+1}-d_k\|^2
+\tfrac6{125}\|e_{k-1}\|^2\ge0,
\end{equation}
where the three rational budgets are exact at $\lambda L=\tfrac15$
(and the coefficients are increasing in $\lambda L$, hence stay negative on
$(0,\tfrac1{5L}]$); the $\|x_k-u_{k-1}\|^2$ channel cancels exactly
($2\lambda L$ from \eqref{eq:T1} against the $2\lambda L$ weight
in \eqref{eq:lyap}):
\begin{align*}
\|d_k\|^2 &: \ -1+\tfrac{8}{3}\lambda L+\tfrac49=-\tfrac1{45},\\
\|d_{k+1}-d_k\|^2 &: \ -1+\tfrac85\lambda L+\tfrac{16}{5}\lambda L=-\tfrac1{25},\\
\|e_{k-1}\|^2 &: \ \lambda L\bigl(\tfrac16+\tfrac12+\tfrac1{100}+\tfrac13\bigr)-\tfrac14
=\tfrac{101}{100}\lambda L-\tfrac14=-\tfrac6{125}.
\end{align*}
(Here $\tfrac12=\,$$2\lambda L$ times the $e_{k-1}$-part of the recursion
bound, $\tfrac1{100}=\alpha^{-1}\bar\beta$ with $\alpha=25$, and
$\tfrac13=\,$$2\lambda L$ times the $\tfrac16$ of the $x_{k+1}-u_k$ split.) (The channel-by-channel
assembly of \eqref{eq:master} from \eqref{eq:step2}--\eqref{eq:tele} is
carried out in Appendix~\ref{app:assembly}.)

\medskip\noindent\emph{Step 5 (boundedness).}
Summing \eqref{eq:master} and dropping $-b_k\le0$,
$a_n\le a_0+\sum_{j<n}\epsilon_jr_{j-1}^2$.  From \eqref{eq:lyap}, isolating $r_k^2$, dropping the two nonnegative norm
channels, and bounding the inner-product channel by
$-\inner{B(z)}{x_{k-1}-z}\le\|B(z)\|r_{k-1}$,
\begin{equation}\label{eq:rk}
r_k^2\le a_k+2\lambda\|B(z)\|\,r_{k-1}.
\end{equation}
Therefore $r_k^2\le A+2\lambda\|B(z)\|r_{k-1}+\sum_{j<k}\epsilon_jr_{j-1}^2$
with $A=a_0$, and Lemma~\ref{lem:gronwall} yields $M:=\sup_kr_k<\infty$.
Consequently $\sup_ka_k<\infty$, $(x_k)$ is bounded, and, as $e_k$ is a
$\bar\beta$-contractive filter of the bounded increments $(d_k)$ (see
\eqref{eq:rec}), $(u_k)$ is bounded as well.

\medskip\noindent\emph{Step 6 (vanishing increments).}
From \eqref{eq:master}, $\sum_{k=0}^{n}b_k\le a_0+M^2\sum_{j<n}\epsilon_j
\le a_0+25\lambda LM^2\sum_j\beta_j<\infty$, so $\sum_kb_k<\infty$ and
$b_k\to0$.  By \eqref{eq:bk}, $\|d_k\|\to0$, $\|d_{k+1}-d_k\|\to0$,
$\|e_{k-1}\|\to0$; in particular $e_k=u_k-x_k\to0$ and
$\|x_k-u_{k-1}\|\to0$.

\medskip\noindent\emph{Step 7 (weak cluster points solve \eqref{eq:vi}).}
Let $x_{k_i}\rightharpoonup\bar x$.  By Step~6, $x_{k_i+1}\rightharpoonup\bar x$
and $u_{k_i}\rightharpoonup\bar x$; $(B(u_{k_i}))$ is bounded by
$L\,\sup_k(\|u_k-z\|)+\|B(z)\|<\infty$.  Lemma~\ref{lem:proj} with
$x=x_{k_i}-\lambda B(u_{k_i})$, $\bar x=x_{k_i+1}$ and arbitrary $y\in C$
gives
\[
\inner{x_{k_i+1}-x_{k_i}}{y-x_{k_i+1}}
+\lambda\inner{B(u_{k_i})}{y-u_{k_i}}
+\lambda\inner{B(u_{k_i})}{u_{k_i}-x_{k_i+1}}\ge0 .
\]
By monotonicity at $(y,u_{k_i})\in C\times\HH$,
$\inner{B(u_{k_i})}{y-u_{k_i}}\le\inner{B(y)}{y-u_{k_i}}$.
Letting $i\to\infty$: the first inner product vanishes since
$x_{k_i+1}-x_{k_i}\to0$ strongly and $y-x_{k_i+1}$ is bounded; the third
vanishes since $(B(u_{k_i}))$ is bounded and
$u_{k_i}-x_{k_i+1}=e_{k_i}-d_{k_i+1}\to0$; the middle one converges to
$\inner{B(y)}{y-\bar x}$.  Hence $\inner{B(y)}{y-\bar x}\ge0$ for all $y\in C$,
and $\bar x\in S$ by Lemma~\ref{lem:minty}.

\medskip\noindent\emph{Step 8 (uniqueness of the weak limit; Opial).}
This is the standard Opial uniqueness argument, given in full because the
Lyapunov correction term $2\lambda\inner{B(z)}{x_{k-1}-z}$ makes the
estimation non-obvious. For $z\in S$ define
$\Phi_k(z):=\|x_k-z\|^2+2\lambda\inner{B(z)}{x_{k-1}-z}$.
By \eqref{eq:master}, $a_{k+1}\le a_k+\epsilon_kM^2$ with $\sum_k\epsilon_k<\infty$,
and $a_k\ge-2\lambda\|B(z)\|M$ by \eqref{eq:rk}, so $a_k(z)$ converges for
every $z\in S$; as $\|x_k-u_{k-1}\|\to0$ and $e_{k-1}\to0$,
\begin{equation}\label{eq:philim}
\Phi_k(z)\ \text{converges for every } z\in S .
\end{equation}
Suppose $\bar x,\tilde x\in S$ are two distinct weak cluster points, with
$x_{k_i}\rightharpoonup\bar x$ and $x_{m_j}\rightharpoonup\tilde x$.  Since
$\bar x\in C$ and $\tilde x\in S$, the very definition of $S$ gives
\begin{equation}\label{eq:cross}
\inner{B(\tilde x)}{\bar x-\tilde x}\ge0
\qquad\text{and}\qquad
\inner{B(\bar x)}{\tilde x-\bar x}\ge0 .
\end{equation}
Now the chain of (in)equalities: the limit in \eqref{eq:philim} for $z=\bar x$
equals its value along $(k_i)$; since $x_{k_i-1}\rightharpoonup\bar x$ and
$\bar x\in S$,
\[
\lim_k\Phi_k(\bar x)
=\lim_i\bigl(\|x_{k_i}-\bar x\|^2+2\lambda\inner{B(\bar x)}{x_{k_i-1}-\bar x}\bigr)
=\lim_i\|x_{k_i}-\bar x\|^2
<\lim_i\|x_{k_i}-\tilde x\|^2 ,
\]
the strict inequality by Lemma~\ref{lem:opial} applied to
$x_{k_i}\rightharpoonup\bar x\neq\tilde x$.  By \eqref{eq:cross},
$2\lambda\inner{B(\tilde x)}{x_{k_i}-\tilde x}\to
2\lambda\inner{B(\tilde x)}{\bar x-\tilde x}\ge0$, so
\begin{eqnarray*}
\lim_i\|x_{k_i}-\tilde x\|^2
&\le&\lim_i\bigl(\|x_{k_i}-\tilde x\|^2+2\lambda\inner{B(\tilde x)}{x_{k_i}-\tilde x}\bigr)\\
&=&\lim_k\Phi_k(\tilde x)\\
&=&\lim_j\bigl(\|x_{m_j}-\tilde x\|^2+2\lambda\inner{B(\tilde x)}{x_{m_j-1}-\tilde x}\bigr)\\
&=&\lim_j\|x_{m_j}-\tilde x\|^2 ,
\end{eqnarray*}
the last equality since $x_{m_j-1}\rightharpoonup\tilde x$ and
$\tilde x\in S$.  By Lemma~\ref{lem:opial} applied to
$x_{m_j}\rightharpoonup\tilde x\neq\bar x$, and then \eqref{eq:cross} once
more,
\[
\lim_j\|x_{m_j}-\tilde x\|^2<\lim_j\|x_{m_j}-\bar x\|^2
\le\lim_j\bigl(\|x_{m_j}-\bar x\|^2+2\lambda\inner{B(\bar x)}{x_{m_j}-\bar x}\bigr)
=\lim_k\Phi_k(\bar x).
\]
Chaining the strict inequalities yields $\lim_k\Phi_k(\bar x)<\lim_k\Phi_k(\bar x)$,
a contradiction.  Hence there is a unique weak cluster point
$x^\star\in S$, $x_k\rightharpoonup x^\star$, and the conclusions
$\|x_{k+1}-x_k\|\to0$, $\|u_k-x_k\|\to0$ hold by Step~6.
\end{proof}

\begin{remark}[the channels of $a_k$]\label{rem:channels}
The Lyapunov function \eqref{eq:lyap} has four channels with distinct roles.
The $\|x_k-z\|^2$ channel carries the fundamental contraction; the
$\|x_k-u_{k-1}\|^2$ channel (weight $2\lambda L$) exactly cancels the
cross-term produced by splitting $\|u_k-u_{k-1}\|^2$ and thereby converts
the reflection memory into a telescoping identity; the
$\inner{B(z)}{x_{k-1}-z}$ channel cancels the drift-like terms
$\delta_k$ \emph{exactly}; and the $\|e_{k-1}\|^2$ channel (weight
$\eta=\tfrac13$) is the cheapest weight that keeps the
e-budget negative.
Finally, the solution-anchor term $-2\lambda\inner{B(z)}{x_k-z}$ produced by \eqref{eq:tele} (and its analogues $-2\lambda_k\inner{B(z)}{x_k-z}$ in the adaptive and data-driven master estimates) is dropped in every master recursion; this is legitimate because $x_k\in C$ and $z\in S$ imply $\inner{B(z)}{x_k-z}\ge0$ by \eqref{eq:vi} --- a drop by sign, unlike the drops by summability used elsewhere.
The proof is thus a competition between the $-1$'s coming from the projection
identity \eqref{eq:projid} and the $\lambda L$-scaled Young charges; the
three budgets $-\tfrac1{45},-\tfrac1{25},-\tfrac6{125}$ record the exact
outcome of this competition at $\lambda L=\tfrac15$.
\end{remark}

\medskip\noindent\emph{Limits of the theorem and necessity of the summability
hypothesis.}  The next two results are placed exactly where the corresponding
hypotheses enter.

\begin{example}[no uniform rate under mere monotonicity, and the correct mechanism]\label{ex:weak-only}
Let $\HH=\ell^2$ with standard basis $(e_j)$, and let $B$ be the bounded
skew-adjoint (hence monotone, $L=1$) operator that acts on the pair
$(e_{2n-1},e_{2n})$ as $\theta_n J$ with $J$ the counterclockwise rotation by
$\pi/2$ and $\theta_n=\tfrac1n$.  Then $S=\{0\}$.  The PRG scheme
($\beta_k\equiv0$) decouples into independent $2\times2$ blocks with
characteristic roots
\[
r_n^\pm=\tfrac12\Bigl(1-2i\lambda\theta_n\pm\sqrt{1-4\lambda^2\theta_n^2}\Bigr),\qquad
|r_n^+|^2=\tfrac12\Bigl(1+\sqrt{1-4\lambda^2\theta_n^2}\Bigr)
=1-\lambda^2\theta_n^2+O(\lambda^4\theta_n^4),
\]
so for a skew block the contraction factor is \emph{quadratic} in
$\lambda\theta_n$.  Two consequences follow.
(i) \emph{No rate uniform over the unit ball.}  For unit-norm $x_0$
supported on block $N$ the iterate on that block is
$c_+(r_N^+)^k+c_-(r_N^-)^k$ with $|c_+|\ge\tfrac12$ and
$|r_N^-|/|r_N^+|=O(\lambda/N)$, so $\|x_k\|\ge\tfrac12|r_N^+|^k$ for all
sufficiently large $k$; choosing $N=\Theta(\lambda\sqrt k)$ gives
$\|x_k\|\ge\delta>0$ with $\delta$ independent of $k$.  Hence
no rate function $f(k)\to0$ dominates $\|x_k\|$ over all unit initial points.
(ii) \emph{Convergence is strong for every fixed $x_0$.}  If
$x_0=\sum_n a_n u_n\in\ell^2$, then
$\|x_k\|^2\le2\sum_n\bigl(|c_{n,+}|^2|r_n^+|^{2k}+|c_{n,-}|^2|r_n^-|^{2k}\bigr)$
with $|c_{n,\pm}|\le C|a_n|$, and each summand is bounded by $C'|a_n|^2\in\ell^1$
and tends to $0$ pointwise; dominated convergence gives $\|x_k\|\to0$.  The
decay is arbitrarily slow (its speed depends on the tail of $(a_n)$), which is
the sharp reading of the lower-bound picture for the monotone class
\cite{nemirovski1983,ouyang2021}: Theorem~\ref{thm:main} admits no iteration
rate without further structure.  We stress that weak convergence that is
\emph{never} strong does not occur for this affine diagonal operator.  In
fact Theorem~\ref{thm:affine-sqrt3} shows that \emph{every} mode of a fixed
affine monotone operator is contracted geometrically by the reflected scheme,
so convergence for such an operator is always strong, with a linear rate set
by the slowest mode; part~(i) above shows only that this rate has no uniform
lower bound over bounded initial data.  Genuine weak-nonstrong behaviour
requires an isometric component --- e.g.\ the classical Krasnoselskii--Mann
iteration of a nonexpansive mapping that is not a contraction --- and cannot
arise from the affine operators considered here.
\end{example}

\begin{counterexample}[summability of the filter is essential]\label{cx:summability}
Let $B(x)=x$ on $\R$ ($\sigma=L=1$) and $\lambda=0.6$.  The iteration is
linear with state $(x_k,x_{k-1},u_{k-1})$, and its spectral radius
$\rho(\beta)$ is explicit.  Table~\ref{tab:thresholds} reports the stability
threshold $\lambda^{\mathrm{crit}}(\beta)$ as a function of a \emph{constant} filter
$\beta_k\equiv\beta$; at $\lambda=0.6$,
\begin{eqnarray*}
&&\rho(0)=0.8810<1,\\
&&\text{time-varying recurrence }\beta_k=\tfrac{0.25}{(k+1)^2}\ \text{converges (numerically to machine precision)},\\
&&\rho(0.25)=1.0914>1 .
\end{eqnarray*}
Thus the summable-filter scheme converges to machine precision ($|x_k|<10^{-300}$
by $k=2\cdot10^{4}$), while the constant filter --- still satisfying the pointwise
bound $0\le\beta_k\le\tfrac14$ --- diverges ($|x_k|$ grows geometrically).
Summability $\sum_k\beta_k<\infty$ is therefore not a proof convenience but a
\emph{structural} hypothesis.
\begin{table}[H]
\centering
\caption{Stability threshold $\lambda^{\mathrm{crit}}(\beta)$ for constant filter
$\beta_k\equiv\beta$ on $B(x)=x$ (spectral radius of the $3\times3$
companion matrix).}\label{tab:thresholds}
\begin{tabular}{lrrrrrr}
\toprule
$\beta$ & $0$ & $0.05$ & $0.10$ & $0.15$ & $0.20$ & $0.25$\\
\midrule
$\lambda^{\mathrm{crit}}(\beta)$ & $0.6667$ & $0.6441$ & $0.6207$ & $0.5965$ & $0.5714$ & $0.5455$\\
\bottomrule
\end{tabular}
\end{table}
\end{counterexample}

\section{Extension of the constant-step range}\label{sec:ext}
The Lyapunov--Young technique of Section~\ref{sec:constant} and
Appendix~\ref{app:barrier} certifies the constant-step range
$(0,\tfrac1{5L})$ and provably no more (channel cap $1/3$; best-found barrier
$\approx0.272$).  This section leaves that framework.  The observation is
that the filtered scheme is \emph{exactly} Malitsky's reflected method whose
operator value at the reflection carries a summable error; the reflected
method is robust to such errors, and the reduction identifies the general
unbounded case with a single a priori boundedness statement, which we
certify by a dissipation-trading argument beyond the Lyapunov--Young barrier
and, for affine operators over polyhedral sets, the same threshold yields
$\dist(x_n,S)\to0$ with $\sum_n\dist^2(x_n,S)<\infty$, strong convergence
when $C$ is bounded (finite dimensions), and $R$-linear rates once the
optimal face is identified.  Standing notation for the whole section:
\[
d_n:=x_n-x_{n-1},\qquad e_n:=u_n-x_n,\qquad y_n:=2x_n-x_{n-1},\qquad
\Lambda:=\lambda L,\qquad \bar\beta:=\sup_n\beta_n\le\tfrac14 .
\]

\subsection{The scheme is PRG with a summable evaluation error}
From $\theta_n+\beta_n=1$ one has $e_n=d_n-\beta_ne_{n-1}$ and
\begin{equation}\label{eq:red-x}
u_n=y_n-\beta_ne_{n-1},\qquad\text{hence}\qquad
B(u_n)=B(y_n)-\xi_n,\quad \xi_n:=B(y_n)-B(u_n),
\end{equation}
with
\begin{equation}\label{eq:prgerr}
x_{n+1}=P_C\bigl(x_n-\lambda(B(y_n)-\xi_n)\bigr),\qquad
\|\xi_n\|\le L\beta_n\|e_{n-1}\| .
\end{equation}
The scheme \eqref{eq:scheme} is therefore exactly the projected reflected
gradient method whose operator value at the reflection $y_n$ is corrupted by
$\xi_n$.

\subsection{A robustness lemma for the reflected method}
\begin{lemma}[nonlinear Gronwall with a linear term]\label{lem:gronU}
Let $(u_n)\subset[0,\infty)$, $(\varepsilon_n)\subset[0,\infty)$ with
$\sum_n\varepsilon_n<\infty$, and $A,\gamma\ge0$. If for some $N$ and all $n>N$
\[
u_n\le A+\gamma\sqrt{U_{n-1}}+\sum_{j<n}\varepsilon_jU_j,
\qquad U_{n-1}:=\max_{j\le n-1}u_j,
\]
then $\sup_nu_n<\infty$.
\end{lemma}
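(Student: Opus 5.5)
The plan is to imitate the proof of Lemma~\ref{lem:gronwall}: work with the running maximum $U_n=\max_{j\le n}u_j$, split the sum into a finite head and a small tail, and absorb the tail. The difference from the earlier lemma is that the sum now runs over $j<n$ with $U_j$ inside; in particular it contains the term $\varepsilon_{n-1}U_{n-1}$. Since we only need a bound in terms of $U_{n-1}$, this causes no circularity.

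First fix $J\ge N$ such that $\sum_{j\ge J}\varepsilon_j\le\tfrac14$, and set $C_J:=\sum_{j<J}\varepsilon_jU_j$. This is a finite number because it involves only finitely many finite $u_j$.

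For $n>J$ we have $U_j\le U_{n-1}$ whenever $j\le n-1$, so the tail $\sum_{J\le j<n}\varepsilon_jU_j$ is at most $\tfrac14U_{n-1}$. For the linear term, Young's inequality gives $\gamma\sqrt{U_{n-1}}\le\tfrac14U_{n-1}+\gamma^2$. Hence
\[
u_n\le A+C_J+\gamma^2+\tfrac12U_{n-1}\qquad(n>J).
\]
Since also $U_{n-1}\le U_{n-1}$, taking the maximum gives
\[
U_n\le\max\{U_{n-1},\,A+C_J+\gamma^2+\tfrac12U_{n-1}\}.
\]

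To conclude, set $K:=\max\{U_J,\,2(A+C_J+\gamma^2)\}$ and argue by induction that $U_n\le K$ for all $n\ge J$. If $U_{n-1}\le K$, then $A+C_J+\gamma^2+\tfrac12U_{n-1}\le\tfrac12K+\tfrac12K=K$, so $U_n\le K$. Therefore $\sup_nu_n\le K<\infty$.

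I expect no real obstacle. The one point needing care is that the sum includes $j=n-1$, where $U_{n-1}$ appears at the same level as the quantity being bounded. This is handled because $\varepsilon_{n-1}$ belongs to the tail of total mass at most $\tfrac14$, so it is absorbed together with the rest of the tail. The hypothesis that the recursion holds only for $n>N$ is handled by choosing $J\ge N$.
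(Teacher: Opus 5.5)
Your proof is correct and follows essentially the same running-maximum absorption as the paper's argument. You split off a finite head, bound the tail by a small multiple of $U_{n-1}$, and close by induction on $U_n$. The only cosmetic difference is how the linear term is handled: you absorb $\gamma\sqrt{U_{n-1}}$ by Young's inequality $\gamma\sqrt{U}\le\tfrac14U+\gamma^2$ (as in Lemma~\ref{lem:gronwall}), whereas the paper takes tail mass at most $\tfrac12$, treats the case $U_n=u_n$, and solves the resulting quadratic inequality in $\sqrt{U_n}$. You also correctly enlarge the starting index to $J\ge N$, which the paper does only implicitly.
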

\begin{proof}
For $n>N$, $\sum_{j<n}\varepsilon_jU_j\le U_{n-1}\sum_{j\ge N}\varepsilon_j
+ C_N$, where $C_N:=U_{N}\sum_{j<N}\varepsilon_j<\infty$. Choose $N$ with
$\sum_{j\ge N}\varepsilon_j\le\tfrac12$. If $u_n\le U_{n-1}$ then $U_n=U_{n-1}$;
otherwise $U_n=u_n\le A+C_N+\gamma\sqrt{U_n}+\tfrac12U_n$, i.e.\
$v^2-2\gamma v-2(A+C_N)\le0$ for $v=\sqrt{U_n}$, so
$v\le\gamma+\sqrt{\gamma^2+2(A+C_N)}$. Hence $U_n\le\max\{U_N,(\gamma+
\sqrt{\gamma^2+2(A+C_N)})^2\}$ for all $n$.
\end{proof}

\begin{lemma}[convergence under positive-part drift]\label{lem:pospart}
If $a_{n+1}\le a_n+\eta_n$ with $\sum_n\eta_n^+<\infty$ and $a_n\ge-K$, then
$a_n$ converges.
\end{lemma}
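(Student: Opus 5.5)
The plan is to reduce Lemma~\ref{lem:pospart} to the classical fact that a monotone, bounded real sequence converges, by absorbing the positive drift into a compensating tail sum. Set $T_n:=\sum_{j\ge n}\eta_j^+$, which is finite by hypothesis, nonincreasing in $n$, and satisfies $T_n\to0$. Define the auxiliary sequence
\[
c_n:=a_n+T_n .
\]

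First we check that $(c_n)$ is nonincreasing. Since $\eta_n\le\eta_n^+$ and $T_n=\eta_n^++T_{n+1}$,
\[
c_{n+1}=a_{n+1}+T_{n+1}\le a_n+\eta_n^++T_{n+1}=a_n+T_n=c_n .
\]

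Second, we check that $(c_n)$ is bounded below: $c_n\ge a_n\ge-K$, because $T_n\ge0$. A nonincreasing sequence that is bounded below converges, so $c_n\to c$ for some $c\in[-K,c_0]$; note also $c_0=a_0+T_0<\infty$. Finally, since $T_n\to0$ as the tail of a convergent series, $a_n=c_n-T_n\to c$, which is the claim.

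There is no genuine obstacle here. The only points needing care are using $\eta_n^+$ rather than $\eta_n$ in the compensator, so that $T_n$ is well defined and nonnegative even when $\sum_n\eta_n$ itself does not converge absolutely, and using the lower bound $-K$ to exclude divergence to $-\infty$.
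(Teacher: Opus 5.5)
Your proof is correct and rests on the same ingredient as the paper's, namely the tail $T_n=\sum_{j\ge n}\eta_j^+$. The paper bounds $a_m\le a_n+T_n$ for $m>n$ and lets $m\to\infty$ and then $n\to\infty$ to get $\limsup a\le\liminf a$, whereas you package the same tail into the nonincreasing sequence $a_n+T_n$ and invoke monotone convergence; the difference is cosmetic, though your version makes explicit the use of $a_n\ge-K$ to rule out a limit of $-\infty$, which the paper's one-line argument uses only tacitly.
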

\begin{proof}
For $m>n$, $a_m\le a_n+\sum_{j=n}^{m-1}\eta_j^+$, so
$\limsup_m a_m\le a_n+\sum_{j\ge n}\eta_j^+$; letting $n\to\infty$ gives
$\limsup a\le\liminf a$.
\end{proof}

\begin{lemma}[summable-error robustness of PRG]\label{lem:xrobust}
Let $\lambda\in(0,(\sqrt2-1)/L)$ and consider \eqref{eq:prgerr} with
\emph{exogenous} errors $(\xi_n)$ satisfying $\sum_n\|\xi_n\|<\infty$.
Then $(x_n)$ converges weakly to a point of $S$, with
$\|x_{n+1}-x_n\|\to0$.
\end{lemma}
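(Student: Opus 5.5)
Our plan is to rerun Malitsky's Lyapunov argument for the projected reflected gradient method with the perturbed value $g_n:=B(y_n)-\xi_n$, and to treat every occurrence of $\xi_n$ as an additive drift of size $O(\|\xi_n\|\,(\text{bounded quantity}))$. Fix $z\in S$ and put $r_n:=\|x_n-z\|$. Apply \eqref{eq:firm} with $x=x_n-\lambda g_n$, $y=z$, and add $2\lambda\inner{B(y_n)-B(z)}{y_n-z}\ge0$ together with $2\lambda\inner{B(z)}{y_n-z}$; for the latter write $y_n-z=(x_n-z)+d_n$, drop $\inner{B(z)}{x_n-z}\ge0$ by sign, and let the $\delta_n$ piece telescope exactly as in Step~4 of Theorem~\ref{thm:main}. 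The projection inequality at step $n-1$, tested with $y=x_{n+1}$ and $y=x_{n-1}$, gives the analogue of \eqref{eq:projid}, now with $g_{n-1}$ in place of $B(u_{k-1})$. Bounding $2\lambda\inner{B(y_n)-B(y_{n-1})}{y_n-x_{n+1}}$ by Young with Malitsky's weights, we expect the unperturbed recursion
\[
a_{n+1}\le a_n-c\bigl(\|d_n\|^2+\|x_{n+1}-y_n\|^2\bigr)+\eta_n,
\qquad
a_n:=\|x_n-z\|^2+\Lambda'\|x_n-x_{n-1}\|^2+2\lambda\inner{B(z)}{x_{n-1}-z}+\dots,
\]
with $c>0$ precisely when $\lambda L<\sqrt2-1$. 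This is the constant-step computation of \cite{malitsky2015} and we will take its coefficients from there.

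The error contributes $\eta_n=2\lambda\inner{\xi_n}{x_{n+1}-z}+2\lambda\inner{\xi_{n-1}}{x_n+d_n-x_{n+1}}$. The first term comes from Step~1 and is at most $2\lambda\|\xi_n\|(r_{n+1})$; note that it involves $x_{n+1}$, which is not yet controlled, so we will bound it by $r_n+\|d_{n+1}\|$. The second term is at most $2\lambda\|\xi_{n-1}\|(\|d_n\|+\|d_{n+1}\|)$. Thus $\eta_n\le C\|\xi_{n}\|(r_n+\|d_{n+1}\|)+C\|\xi_{n-1}\|(\|d_n\|+\|d_{n+1}\|)$. With $\varepsilon_n:=\|\xi_n\|+\|\xi_{n-1}\|$, which is summable, and $u_n:=a_n$-type quantities (all of $r_n^2$ and $\|d_n\|^2$ are dominated by $a_n$ plus the linear inner-product channel), we get $\eta_n\le C\varepsilon_n(1+U_{n+1})$, where $U$ is the running max of $a$. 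The $\|d_{n+1}\|$ term is absorbed by the dissipation $-c\|x_{n+1}-y_n\|^2$ together with $\|d_n\|$ via Young once $\varepsilon_n$ is small (that is, for $n>N$). Summing, and using $r_n^2\le a_n+2\lambda\|B(z)\|r_{n-1}$ as in \eqref{eq:rk}, we reach a recursion $u_n\le A+\gamma\sqrt{U_{n-1}}+\sum_{j<n}\varepsilon_jU_j$. Lemma~\ref{lem:gronU} then gives boundedness of $(x_n)$, and hence of $(y_n)$ and of $(B(y_n))$.

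Once boundedness holds, $\eta_n^+\le C'\varepsilon_n$ is summable, so $\sum_n(\|d_n\|^2+\|x_{n+1}-y_n\|^2)<\infty$. Lemma~\ref{lem:pospart}, with $a_n\ge-K$ from \eqref{eq:rk}, shows that $a_n(z)$ converges, and hence $\Phi_n(z)$ converges for every $z\in S$. Step~7 of Theorem~\ref{thm:main} then goes through verbatim with $u_{k_i}$ replaced by $y_{k_i}$; the extra term $\lambda\inner{\xi_{k_i}}{y-x_{k_i+1}}\to0$ because $\xi_n\to0$. Step~8 (Opial with the anchored functional $\Phi_n$) is likewise unchanged. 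This yields weak convergence to a point of $S$ and $\|x_{n+1}-x_n\|\to0$.

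The main obstacle is the boundedness step. The error terms couple to the next iterate $x_{n+1}$, so the recursion is implicit, and they are only linear in the unknowns. A naive Young split $\|\xi\|r\le\|\xi\|(1+r^2)$ works but has to be run with the running maximum, which is exactly why Lemma~\ref{lem:gronU} is phrased with $U_j$. The point that needs care is absorbing the $\|d_{n+1}\|$ factor through the negative budget without eating the margin that $\lambda L<\sqrt2-1$ provides; this is a matter of taking $N$ large enough that $\varepsilon_n\ll c$.
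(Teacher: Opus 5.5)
Your proposal is correct and follows the paper's proof. It uses the same Malitsky-type master inequality with the same drift $\eta_n=2\lambda\inner{\xi_n}{x_{n+1}-z}+2\lambda\inner{\xi_{n-1}}{y_n-x_{n+1}}$, boundedness via Lemma~\ref{lem:gronU}, and then Lemma~\ref{lem:pospart}, Minty and Opial. The differences are minor:
\begin{enumerate}
\item The memory channel in the Lyapunov function is $\Lambda\|x_n-y_{n-1}\|^2$, not a multiple of $\|x_n-x_{n-1}\|^2$.
\item You decouple $x_{n+1}$ by absorbing $\|d_{n+1}\|\le\|x_{n+1}-y_n\|+\|d_n\|$ into the dissipation for $n>N$. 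The paper instead uses the explicit step bound $\|d_{n+1}\|\le\lambda\|B(y_n)\|+\lambda\|\xi_n\|$ together with a multiplicative $(1+\delta_n)$ iteration. Both close the recursion; yours leaves a drift linear in $r_n$ and is slightly simpler.
\end{enumerate}
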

\begin{proof}
Fix $z\in S$, $r_n:=\|x_n-z\|$, $B_z:=\|B(z)\|$, $c_n:=2\lambda\|\xi_n\|$
(so $\sum_nc_n<\infty$), $\gamma:=2\lambda B_z$.

\medskip\noindent\emph{Step 1 (master inequality).}
Firm nonexpansiveness of $P_C$ at $w_n=x_n-\lambda B(y_n)+\lambda\xi_n$
($z\in C$) gives $r_{n+1}^2\le\|w_n-z\|^2-\|w_n-x_{n+1}\|^2$. Expanding both
squares, the $\lambda^2\|\xi_n\|^2$ terms cancel exactly, leaving
\begin{equation}\label{eq:xfe}
r_{n+1}^2\le\|x_n-\lambda B(y_n)-z\|^2-\|x_n-\lambda B(y_n)-x_{n+1}\|^2
+2\lambda\inner{\xi_n}{x_{n+1}-z}.
\end{equation}
Monotonicity of $B$ at $(y_n,z)$ and the split
$B(y_n)=B(y_n)-B(y_{n-1})+B(y_{n-1})$ ($n\ge1$) give, by exactly the Young-splitting argument of Step~1 of Theorem~\ref{thm:main} (the resulting estimate is displayedbelow,
\begin{multline}\label{eq:xmaster0}
r_{n+1}^2\le r_n^2-(1-\Lambda(1+\sqrt2))\|d_n\|^2
+\Lambda\|x_n-y_{n-1}\|^2\\
-(1-\sqrt2\Lambda)\|x_{n+1}-y_n\|^2
-2\lambda\inner{B(z)}{y_n-z}+\eta_n
\end{multline}
with $\eta_n:=2\lambda\inner{\xi_n}{x_{n+1}-z}
+2\lambda\inner{\xi_{n-1}}{y_n-x_{n+1}}$: the projection step uses
$x_n=P_C(x_{n-1}-\lambda(B(y_{n-1})-\xi_{n-1}))$ at $y=x_{n+1}$ and
$y=x_{n-1}$, and the Lipschitz step uses $2ab\le a^2/\sqrt2+\sqrt2\,b^2$ together with the split $\|y_n-y_{n-1}\|^2\le(2+\sqrt2)\|d_n\|^2+\sqrt2\,\|x_n-y_{n-1}\|^2$ (weight $t=1+\sqrt2$), giving the exact charge \begin{equation}\label{eq:xT1}
2\lambda\inner{B(y_n)-B(y_{n-1})}{y_n-x_{n+1}}
\le\Lambda\Bigl[(1+\sqrt2)\|d_n\|^2+\|x_n-y_{n-1}\|^2+\sqrt2\,\|y_n-x_{n+1}\|^2\Bigr],
\end{equation} as in Step~1 of the proof of Theorem~\ref{thm:main} with $y_n$ in place of $u_n$. Set
\[
a_n:=r_n^2+\Lambda\|x_n-y_{n-1}\|^2+2\lambda\inner{B(z)}{x_{n-1}-z}.
\]
Substituting \eqref{eq:xmaster0} into $a_{n+1}-a_n$: the memory channel
cancels \emph{exactly} and the remaining Lyapunov shift
$+\Lambda\|x_{n+1}-y_n\|^2$ joins the dissipation channel, so for $n\ge1$
\begin{equation}\label{eq:xmaster}
a_{n+1}\le a_n-c_1\|d_n\|^2-c_2\|x_{n+1}-y_n\|^2+\eta_n,
\qquad c_1=c_2=1-(1+\sqrt2)\Lambda>0,
\end{equation}
where both budgets are equal and positive by $\Lambda<\sqrt2-1$. Also
\begin{equation}\label{eq:xalow}
a_n\ge r_n^2-\gamma r_{n-1},\qquad
\eta_n\le c_n(1+r_{n+1}^2)+c_{n-1}\bigl(1+\|x_{n+1}-y_n\|^2\bigr).
\end{equation}

\medskip\noindent\emph{Step 2 (boundedness).}
For all $n\ge0$,
$\|d_{n+1}\|\le\lambda\|B(y_n)\|+\lambda\|\xi_n\|
\le\Lambda(2r_n+r_{n-1})+\lambda B_z+\lambda\|\xi_n\|$, hence
\begin{align}
\|d_{n+1}\|^2&\le C_d\bigl(1+r_n^2+r_{n-1}^2+\|\xi_n\|^2\bigr),\label{eq:xinc}\\
\|x_{n+1}-y_n\|^2&\le C_d'\bigl(1+\bar r_n^2\bigr).\notag
\end{align}
$\bar r_n:=\max_{j\le n}r_j$, for explicit $C_d,C_d'$. From \eqref{eq:xmaster}
and \eqref{eq:xalow},
$a_{n+1}\le a_n+c_n(1+r_{n+1}^2)+c_{n-1}(1+C_d'(1+\bar r_{n+1}^2))$.
With $r_{n+1}^2\le a_{n+1}+\gamma r_n$ and
$\bar r_{n+1}^2\le\bar r_n^2+a_{n+1}+\gamma r_n$, for all $n$ with
$c_n+C_d'c_{n-1}\le\tfrac12$,
\[
a_{n+1}\le(1+\delta_n)a_n+\varepsilon_n(1+\bar r_n^2+r_n),\qquad
\delta_n:=2(c_n+C_d'c_{n-1}),\quad
\varepsilon_n:=2\bigl(c_n\gamma+c_n+c_{n-1}(1+C_d'(1+\gamma))\bigr),
\]
with $\sum_n\delta_n<\infty$, $\sum_n\varepsilon_n<\infty$. Iterating with
$G:=\exp(\sum_{n\ge N}\delta_n)<\infty$ gives
$a_n\le A+\sum_{j<n}\tilde\varepsilon_j\bar r_j^2$, $\sum_j\tilde\varepsilon_j<\infty$.
Set $u_n:=\max\{a_n,r_n^2,1\}$; since $\bar r_j^2\le U_j$,
$u_n\le A'+\gamma\sqrt{U_{n-1}}+\sum_{j<n}\tilde\varepsilon_jU_j$, and
Lemma~\ref{lem:gronU} yields $M:=\sup_nr_n<\infty$ and $\sup_na_n<\infty$.

\medskip\noindent\emph{Step 3 (vanishing increments).}
Summing \eqref{eq:xmaster} and using $a_n\ge-\gamma M$,
$\sum_n\eta_n^+\le\sum_n[c_n(1+M^2)+c_{n-1}(1+C_d'(1+M^2))]<\infty$ and
$c_{n-1}\le c_2/2$ eventually:
$\sum_n\|d_n\|^2<\infty$ and $\sum_{n\ge N}\|x_{n+1}-y_n\|^2<\infty$; both
vanish.

\medskip\noindent\emph{Step 4 (cluster points).}
Let $x_{n_i}\rightharpoonup\bar x$; then $x_{n_i+1}\rightharpoonup\bar x$ and
$y_{n_i}\rightharpoonup\bar x$. From
$\inner{w_n-x_{n+1}}{y-x_{n+1}}\le0$ ($y\in C$),
\[
\lambda\inner{B(y_n)}{y-x_{n+1}}
\ge\inner{x_n-x_{n+1}}{y-x_{n+1}}+\lambda\inner{\xi_n}{y-x_{n+1}},
\]
and monotonicity at $(y,y_n)$ bounds the left side by
$\lambda\inner{B(y)}{y-y_n}+\lambda\inner{B(y_n)}{y_n-x_{n+1}}$.
Since $\|B(y_n)\|\le3LM+B_z$, $y_n-x_{n+1}\to0$, $\xi_n\to0$ and
$x_n-x_{n+1}\to0$, the limit gives $\lambda\inner{B(y)}{y-\bar x}\ge0$ for
all $y\in C$, and $\bar x\in S$ by Lemma~\ref{lem:minty}.

\medskip\noindent\emph{Step 5 (Opial uniqueness).}
$\Phi_n(z):=r_n^2+2\lambda\inner{B(z)}{x_{n-1}-z}
=a_n-\Lambda\|x_n-y_{n-1}\|^2$ and $\|x_n-y_{n-1}\|\le\|d_n\|+\|d_{n-1}\|\to0$,
so by Lemma~\ref{lem:pospart} (with $\sum\eta_n^+<\infty$, $a_n\ge-\gamma M$)
$\Phi_n(z)$ converges for every $z\in S$. If $x_{k_i}\rightharpoonup\bar x$
and $x_{m_j}\rightharpoonup\tilde x$ are distinct weak cluster points in $S$,
the chain of Step~8 of Theorem~\ref{thm:main} applies verbatim
($\inner{B(\tilde x)}{\bar x-\tilde x}\ge0$ and
$\inner{B(\bar x)}{\tilde x-\bar x}\ge0$ are $\lambda$-free), giving
$\varphi(\bar x)<\varphi(\tilde x)<\varphi(\bar x)$, a contradiction. Hence
$x_n\rightharpoonup x^\star\in S$.
\end{proof}

\subsection{Bounded domains; the general obstruction; dissipation trading}
\begin{corollary}[bounded domain]\label{cor:xbounded}
If $C$ is bounded, the filtered scheme \eqref{eq:scheme} with
$\lambda<(\sqrt2-1)/L$ converges weakly to a point of $S$, with
$\|x_{n+1}-x_n\|\to0$ and $\|u_n-x_n\|\to0$.
\end{corollary}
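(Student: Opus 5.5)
The plan is to reduce the corollary to Lemma~\ref{lem:xrobust} through the exact reformulation \eqref{eq:red-x}--\eqref{eq:prgerr}. By that reformulation the filtered scheme is the projected reflected gradient method with evaluation error $\xi_n=B(y_n)-B(u_n)$, and $\|\xi_n\|\le L\beta_n\|e_{n-1}\|$. Lemma~\ref{lem:xrobust} asks for $\sum_n\|\xi_n\|<\infty$, and this is the only thing to verify. The lemma calls the errors ``exogenous'', but its proof uses only the realized sequence $(\xi_n)$ and the summability of $\|\xi_n\|$. Once summability is established along the actual trajectory, the lemma therefore applies verbatim. It yields $x_n\rightharpoonup x^\star\in S$ and $\|x_{n+1}-x_n\|\to0$.

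For summability, boundedness of $C$ is exactly what is needed. Every iterate $x_n$ lies in $C$, since $x_0\in C$ and each subsequent iterate is a projection onto $C$. Hence $\|d_n\|\le\operatorname{diam}C=:D$ for all $n$. From the recursion $e_n=d_n-\beta_ne_{n-1}$ with $\bar\beta\le\tfrac14$ and $e_0=u_0-x_0=0$, induction gives $\|e_n\|\le D+\tfrac14\sup_{j<n}\|e_j\|$. Therefore $\sup_n\|e_n\|\le\tfrac43D$. It follows that
\[
\sum_n\|\xi_n\|\le L\sum_n\beta_n\|e_{n-1}\|\le\tfrac43LD\sum_n\beta_n<\infty,
\]
using the standing hypothesis $\sum_n\beta_n<\infty$. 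No a priori estimate on $r_n$ is needed here. That a priori estimate is precisely the missing ingredient in the unbounded case, which is why the corollary is restricted to bounded $C$.

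It remains to get $\|u_n-x_n\|=\|e_n\|\to0$, which the lemma does not provide directly. I would use the recursion again: $\|e_n\|\le\|d_n\|+\bar\beta\|e_{n-1}\|$ with $\|d_n\|\to0$ from Lemma~\ref{lem:xrobust}. Taking $\limsup$ gives $\limsup\|e_n\|\le\tfrac14\limsup\|e_n\|$, and this limsup is finite by the bound above, so it equals $0$. Alternatively, $\beta_n\to0$ makes the contraction factor eventually negligible, which gives the same conclusion.

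The only delicate point is the legitimacy of feeding the trajectory-dependent error $\xi_n$ into a lemma stated for exogenous errors. I would address it by noting that Steps~1--5 of the proof of Lemma~\ref{lem:xrobust} never use independence of $\xi_n$ from the iterates, only the numerical bound $\sum\|\xi_n\|<\infty$. The bound $\|B(y_n)\|\le3LM+B_z$ used in Step~4 is also unaffected.
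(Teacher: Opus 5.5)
Your proposal is correct and follows the paper's proof essentially verbatim: you bound $\|e_n\|$ by $D/(1-\bar\beta)\le\tfrac43 D$ using $\|d_n\|\le\operatorname{diam}C$, deduce $\sum_n\|\xi_n\|<\infty$, invoke Lemma~\ref{lem:xrobust}, and recover $\|e_n\|\to0$ from the filter recursion. Your remark that the lemma's proof uses only the realized summability of $\|\xi_n\|$, not exogeneity, is a point the paper leaves implicit here; it states it explicitly only in the proof of Theorem~\ref{thm:trading}.
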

\begin{proof}
$\|d_{n}\|\leq\operatorname{diam}C=:D$ and the recursion
$e_{n}=d_{n}-\beta_{n}e_{n-1}$ give $\|e_{n}\|\leq D/(1-\bar\beta)<\infty$, hence
$\sum_{n}\|\xi_{n}\|\leq LD(1-\bar\beta)^{-1}\sum_{n}\beta_{n}<\infty$ in
\eqref{eq:red-x}, and Lemma~\ref{lem:xrobust} applies;
$\|u_{n}-x_{n}\|=\|e_{n}\|\leq\|d_{n}\|+\beta_{n}\|e_{n-1}\|\to0$.
\end{proof}

\begin{theorem}[dissipation trading: boundedness beyond the
Lyapunov--Young barrier]\label{thm:trading}
Under Assumption~\ref{ass:main}, with $0\le\beta_n\le\bar\beta\le\tfrac14$,
$\sum_n\beta_n<\infty$, and constant $\lambda$, $\Lambda=\lambda L$, define
\begin{equation}\label{eq:mustar}
\mu^B_j:=2\Lambda\sum_{k\ge0}\bar\beta^{\,k}\beta_{j+1+k},\qquad j\ge0 .
\end{equation}
If
\begin{equation}\label{eq:trading}
\sum_{j\ge0}(\mu^B_j)^2\;\le\;\bigl(1-(1+\sqrt2)\Lambda\bigr)^2
\qquad(\text{in particular }\Lambda<\sqrt2-1),
\end{equation}
then $(x_n)$ generated by \eqref{eq:scheme} converges weakly to a point of $S$,
with $\|x_{n+1}-x_n\|\to0$ and $\|u_n-x_n\|\to0$.  For the filter
$\beta_k=0.25/(k+1)^2$ of Section~\ref{sec:numerics}, \eqref{eq:trading}
holds for all $\Lambda\le0.387$ (the sharp cutoff being $\Lambda\approx0.3879$), and as $\bar\beta\to0$ the certified range
approaches $(0,(\sqrt2-1)/L)$.
\end{theorem}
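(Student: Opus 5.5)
The plan is to reduce the theorem to Lemma~\ref{lem:xrobust} via the error representation \eqref{eq:red-x}--\eqref{eq:prgerr}. The error $\xi_n$ is endogenous, since $\|\xi_n\|\le L\beta_n\|e_{n-1}\|$. It therefore suffices to show $\sum_n\beta_n\|e_{n-1}\|<\infty$. Remark~\ref{rem:xgap} says this reduces to an a priori boundedness statement, and the idea here is to get the needed summability directly from the dissipation of the reflected Lyapunov function.

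\emph{Step 1: the error channel through the increments.} Unrolling $e_n=d_n-\beta_ne_{n-1}$ with $e_0=0$ gives $\|e_{n-1}\|\le\sum_{k\ge0}\bar\beta^{\,k}\|d_{n-1-k}\|$. Reindexing,
\[
2\lambda\sum_n\|\xi_n\|\,\|v\|\ \lesssim\ \sum_j\mu^B_j\,\|d_j\|\cdot(\text{the factor multiplying }\xi),
\]
so the weights $\mu^B_j$ of \eqref{eq:mustar} are exactly the total charge that each increment $d_j$ induces on all future errors.

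\emph{Step 2: trading against dissipation.} I would redo the master inequality \eqref{eq:xmaster} for the filtered scheme, keeping $\eta_n$ rather than treating $\xi_n$ as exogenous. The error term $2\lambda\inner{\xi_n}{x_{n+1}-z}$ is dangerous because it multiplies $r_{n+1}$. I would instead rewrite it, as in Step~1 of Theorem~\ref{thm:main}, using the anchor identity \eqref{eq:tele}: the $B(z)$ part telescopes against the $\inner{B(z)}{x_{n-1}-z}$ channel. The remainder is then bounded in terms of increment quantities, of the form $\Lambda\beta_n\|e_{n-1}\|\,\|w_n\|$ with $w_n$ a dissipated quantity, namely $d_n$ or $x_{n+1}-y_n$. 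Summing over $n\le N$ and applying Cauchy--Schwarz to the double sum gives
\[
\sum_n\eta_n\le\Bigl(\sum_j(\mu^B_j)^2\Bigr)^{1/2}\Bigl(\sum_j\|d_j\|^2\Bigr)^{1/2}\Bigl(\sum_n\|w_n\|^2\Bigr)^{1/2}.
\]
Each factor $\sum\|\cdot\|^2$ is controlled by the budget $c_1=c_2=1-(1+\sqrt2)\Lambda$. Under \eqref{eq:trading} the induced charge is at most $c\bigl(\sum\|d_j\|^2+\sum\|w_n\|^2\bigr)/2\cdot 2\sqrt{\cdot}$, which is absorbed by the dissipation $c_1\sum\|d_n\|^2+c_2\sum\|w_n\|^2$, using $2ab\le a^2+b^2$. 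The result is a partial-sum inequality
\[
a_{N+1}\le a_1+\text{(boundary terms)},
\]
with the anchor part bounded by $\gamma r_N$.

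\emph{Step 3: closing the argument.} With $a_n\ge r_n^2-\gamma r_{n-1}$ from \eqref{eq:xalow}, Lemma~\ref{lem:gronU} (with $\varepsilon\equiv0$) gives $\sup_n r_n<\infty$. Since $d_n$ is then bounded, so is $e_n$, and hence $\sum\|\xi_n\|\le L\sup\|e\|\sum\beta_n<\infty$. Lemma~\ref{lem:xrobust} now yields weak convergence and $\|d_n\|\to0$. As in Corollary~\ref{cor:xbounded}, $\|e_n\|\le\|d_n\|+\beta_n\|e_{n-1}\|\to0$.

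The numerical claims are separate. For $\beta_k=0.25/(k+1)^2$, one evaluates $\sum_j(\mu^B_j)^2$ (a rapidly convergent series) and solves $2\Lambda\sqrt{\Sigma}=1-(1+\sqrt2)\Lambda$, where $\Sigma=\sum_j(\mu^B_j/2\Lambda)^2$; this should give $\Lambda\approx0.3879$. As $\bar\beta\to0$, $\Sigma\to0$ and the condition tends to $\Lambda<\sqrt2-1$.

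\emph{Main obstacle.} The hard step is Step~2: showing that the $\xi_n$ cross terms can be charged \emph{only} to dissipated quantities with no leftover $r_n$ factor. The component of $\xi_n$ paired with $x_{n+1}-z$ must be moved into increments. My first attempt would be a summation by parts, writing $x_{n+1}-z=x_1-z+\sum_{m\le n+1}d_m$. If that produces unfavourable constants, I would fall back to $\eta_n\le c_n(1+r_{n+1}^2)$ for that single piece, which is harmless through Lemma~\ref{lem:gronU}, and apply the trading only to the $\inner{\xi_{n-1}}{y_n-x_{n+1}}$ piece, which is where the squared budget in \eqref{eq:trading} naturally arises.
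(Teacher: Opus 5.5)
Your treatment of the second error piece $2\lambda\inner{\xi_{n-1}}{y_n-x_{n+1}}$ is correct. It is a cleaner form of the paper's proportional-allocation pricing of family $B$. The kernel $K_{n,j}=2\Lambda\beta_{n-1}\bar\beta^{\,n-2-j}$ ($j\le n-2$) is nonnegative with column sums $\mu^B_j$, so $\sum_nK_{n,j}^2\le(\mu^B_j)^2$. Cauchy--Schwarz on the double sum then bounds the total by $\|\mu^B\|_{\ell^2}\bigl(\sum_j\|d_j\|^2\bigr)^{1/2}\bigl(\sum_n\|x_{n+1}-y_n\|^2\bigr)^{1/2}$. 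Under \eqref{eq:trading} this is at most $\tfrac{c_1}{2}$ times the sum of the two dissipation channels, which leaves half of each budget unused. Your numerics ($\Lambda\approx0.3879$) and the final hand-off to Lemma~\ref{lem:xrobust} also agree with the paper.

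The gap is in the first piece, $2\lambda\inner{\xi_n}{x_{n+1}-z}$.

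\emph{Why the main route fails.} The anchor identity cannot remove the distance factor. $\xi_n=B(y_n)-B(u_n)$ is a difference of operator values and has no $B(z)$ component to telescope. Even if you rederive the estimate with monotonicity at $(u_n,z)$ as in Theorem~\ref{thm:main}, the $B(z)$ part cancels but the Lipschitz remainder $2\lambda\beta_kL\,r_{k-1}\|e_{k-1}\|$ of \eqref{eq:T2} still carries $r$. So a term of size $\Lambda\beta_n\|e_{n-1}\|\,r$ is unavoidable, and Step~3 cannot run with $\varepsilon\equiv0$.

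\emph{Why the fallback fails.} The bound $\eta_n\le c_n(1+r_{n+1}^2)$ with $c_n=2\lambda\|\xi_n\|\le2\Lambda\beta_n\|e_{n-1}\|$ is circular. This weight is known to be summable only once $\sup_n\|e_n\|<\infty$, that is, after boundedness. That is exactly the obstruction recorded in Remark~\ref{rem:xgap}, which the theorem is designed to get around.

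\emph{The missing step.} You need to price this term rather than eliminate it, as the paper does for family $A$:
\begin{itemize}
\item Split $2\Lambda\beta_n\|e_{n-1}\|r_{n+1}\le\Lambda\beta_n\bigl(\theta\|e_{n-1}\|^2+\theta^{-1}r_{n+1}^2\bigr)$.
\item Expand $\|e_{n-1}\|^2\le(1-\bar\beta)^{-1}\sum_k\bar\beta^k\|d_{n-1-k}\|^2$. After summation over $n$, each $\|d_j\|^2$ is charged at most $\Lambda\theta\bar\beta(1-\bar\beta)^{-2}$. For small $\theta$ this fits in the free $c_1/2$ of the $d$-budget.
\item The weight $\Lambda\beta_n/\theta$ on $r_{n+1}^2\le a_{n+1}+\gamma r_n$ is exogenous and summable. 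Lemma~\ref{lem:gronU} then closes the recursion with $\varepsilon_n\propto\beta_n$, not $\varepsilon\equiv0$.
\end{itemize}

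Alternatively, keep the product form and insert the linear growth bound $\|e_{n-1}\|\le C_E(1+\bar r_{n-1})$ from Remark~\ref{rem:xgap}. This gives $2\lambda\|\xi_n\|\,r_{n+1}\le C\beta_n(1+\bar r_{n+1}^2)$, which is again a summable-weight Gronwall term. Only splitting off $\|\xi_n\|$ as a separate weight is circular.
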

\begin{proof}
The reduction \eqref{eq:red-x}--\eqref{eq:prgerr} is exact and the master
inequality \eqref{eq:xmaster} of Lemma~\ref{lem:xrobust} holds verbatim (its
derivation never used that $(\xi_n)$ is exogenous):
\[
a_{n+1}\le a_n-c_1\|d_n\|^2-c_2\|x_{n+1}-y_n\|^2+\eta_n,\qquad
c_1=c_2=1-(1+\sqrt2)\Lambda>0,
\]
with $\eta_n=2\lambda\langle\xi_n,x_{n+1}-z\rangle
+2\lambda\langle\xi_{n-1},y_n-x_{n+1}\rangle$.

\emph{Step 1 (exact expansion of the filter).} Iterating
$e_j=d_j-\beta_je_{j-1}$ from $e_{-1}=0$ gives, for every
$j\ge0$,
$\|e_{j-1}\|\le\sum_{k\ge0}\bar\beta^k\|d_{j-1-k}\|$ (a finite sum, $k\le j-1$).
Hence $\|\xi_j\|\le L\beta_j\sum_k\bar\beta^k\|d_{j-1-k}\|$ and
\[
\eta_n\le\sum_{k\ge0}m^A_{n,k}\,\|d_{n-1-k}\|\,\|x_{n+1}-z\|
+\sum_{k\ge0}m^B_{n,k}\,\|d_{n-2-k}\|\,\|x_{n+1}-y_n\|,
\]
with masses $m^A_{n,k}=2\lambda L\beta_n\bar\beta^k$ and
$m^B_{n,k}=2\lambda L\beta_{n-1}\bar\beta^k$.

\emph{Step 2 (pricing by proportional allocation).} For each mass use
$2ab\le\varepsilon a^2+b^2/\varepsilon$. Family $A$ hits $d$-index
$j^A(n,k)=n-1-k$ and family $B$ hits $j^B(n,k)=n-2-k$; both families partition
their masses into disjoint groups indexed by $j$. Set
\[
\mu^A_j:=\sum_{j^A(n,k)=j}m^A_{n,k}=2\Lambda\sum_k\bar\beta^k\beta_{j+1+k},
\qquad
\mu^B_j:=\sum_{j^B(n,k)=j}m^B_{n,k}=2\Lambda\sum_k\bar\beta^k\beta_{j+1+k},
\]
(the latter is \eqref{eq:mustar}) and allocate
\[
\varepsilon^A_{n,k}=c_1''\frac{m^A_{n,k}}{\mu^A_{n-1-k}},\qquad
\varepsilon^B_{n,k}=c_1'\frac{m^B_{n,k}}{\mu^B_{n-2-k}},
\]
so that $\sum_{j^A(n,k)=j}\varepsilon^A_{n,k}=c_1''$ and
$\sum_{j^B(n,k)=j}\varepsilon^B_{n,k}=c_1'$ for every $j$: the all-time $d$-channel consumption per index is exactly $\tfrac12(c_1'+c_1'')$, checked in Step~3. The prices are
$(m^A_{n,k})^2/\varepsilon^A_{n,k}=m^A_{n,k}\mu^A_{n-1-k}/c_1''$ and
$(m^B_{n,k})^2/\varepsilon^B_{n,k}=m^B_{n,k}\mu^B_{n-2-k}/c_1'$. Grouping the
$B$-prices by $j=n-2-k$ gives the \emph{exact} identity
\begin{equation}\label{eq:priceB}
\sum_np^B_n=\frac{1}{c_1'}\sum_j(\mu^B_j)^2,\qquad
p^B_n:=\sum_k\frac{(m^B_{n,k})^2}{\varepsilon^B_{n,k}} .
\end{equation}
For family $A$, with $\bar\mu^A:=\sup_j\mu^A_j\le2\Lambda B_1/(1-\bar\beta)$,
$B_1:=\sum_n\beta_n$, the $r$-channel weight
\[
w^A_n:=\sum_k\frac{(m^A_{n,k})^2}{\varepsilon^A_{n,k}}
\le\frac{2\lambda L\beta_n}{c_1''}\sum_k\bar\beta^k\mu^A_{n-1-k}
\le\frac{4\Lambda^2B_1}{c_1''(1-\bar\beta)^2}\,\beta_n
\]
is summable: $\sum_nw^A_n<\infty$.

\emph{Step 3 (closing the recursion by summation).} Choose
$c_1'=\frac1{c_2}\sum_j(\mu^B_j)^2$, so that by \eqref{eq:priceB} the
$B$-prices total exactly $c_2$; in particular $0\le p^B_n\le c_2$ for every
$n$, so the $B$-channel absorbs step by step:
$-c_2\|x_{n+1}-y_n\|^2+p^B_n\|x_{n+1}-y_n\|^2\le0$. Set $c_1''=c_1-c_1'$;
feasibility $0<c_1'\le c_1=c_2$ is precisely \eqref{eq:trading}. Substituting
the Young bounds with the proportional allocations of Step~2 gives the priced
estimate
\[
a_{n+1}\le a_n-c_1\|d_n\|^2-c_2\|x_{n+1}-y_n\|^2
+\sum_{k\ge0}\frac{\varepsilon^A_{n,k}}2\,\|d_{n-1-k}\|^2
+\sum_{k\ge0}\frac{\varepsilon^B_{n,k}}2\,\|d_{n-2-k}\|^2
+w^A_n\|x_{n+1}-z\|^2+p^B_n\|x_{n+1}-y_n\|^2 .
\]
The $d$-channel charges fall on \emph{past} indices ($j=n-1-k$ and $j=n-2-k$),
so the recursion is closed by summation rather than pointwise. Summing over
$n=0,\dots,N$ and interchanging the order of summation, the allocation
identities
$\sum_{j^A(n,k)=j}\varepsilon^A_{n,k}=c_1''$ and
$\sum_{j^B(n,k)=j}\varepsilon^B_{n,k}=c_1'$ show that each index $j$ is charged
exactly $\tfrac12(c_1''+c_1')=\tfrac{c_1}2$ in total, funded by the
dissipation $c_1\|d_j\|^2$ earned at step $j$, with net dissipation $c_1/2$
remaining:
\[
a_{N+1}+\frac{c_1}2\sum_{n\le N}\|d_n\|^2
+\sum_{n\le N}(c_2-p^B_n)\|x_{n+1}-y_n\|^2
\le a_0+\sum_{n\le N}w^A_n\|x_{n+1}-z\|^2 .
\]
Dropping the two nonnegative dissipation channels and inserting
$\|x_{n+1}-z\|^2\le a_{n+1}+\gamma r_n$ ($\gamma=2\lambda\|B(z)\|$), the
resulting recursion is exactly the one closed in Step~2 of
Lemma~\ref{lem:xrobust} with $(w^A_n)$ in place of $(c_n)$ (the finitely many
indices with $w^A_n>1/2$ are absorbed into the constant): the product trick
with $G=\exp(\sum_j\delta_j)<\infty$ and Lemma~\ref{lem:gronU} yield
$M:=\sup_nr_n<\infty$ and $\sup_na_n<\infty$.

\emph{Step 4 (convergence).} By \eqref{eq:xinc}, $\sup_n\|e_n\|<\infty$, hence
$\sum_n\|\xi_n\|\le LM(1-\bar\beta)^{-1}\sum_n\beta_n<\infty$; now
Lemma~\ref{lem:xrobust} applies verbatim on the bounded tail, since its
Steps~3--5 use only $\sum_n\|\xi_n\|<\infty$.
\end{proof}

\begin{remark}[the unbounded case: one precise open step]\label{rem:xgap}
For unbounded $C$ the reduction \eqref{eq:red-x} still gives
$\|\xi_n\|\le L\beta_n\|e_{n-1}\|$ with
$\|e_{n-1}\|\le C_E(1+\bar r_{n-1})$,
$C_E=(\Lambda+\lambda B_z)/((1-q)(1-\bar\beta))$,
$q=\Lambda/(1-\bar\beta)<1$ (which needs only $\Lambda<\tfrac34$).  The
obstruction is exactly Step 2 of Lemma~\ref{lem:xrobust}: the Gronwall
weights become $\asymp\beta_n(1+\bar r_{n-1})$, summable only \emph{after}
boundedness is known.  Every other step survives with $r$-dependent weights.
Consequently
\begin{center}
\emph{the extension of the constant-step range to $(0,(\sqrt2-1)/L)$ for
unbounded $C$ is equivalent to an a priori bound $\sup_nr_n<\infty$ for
\eqref{eq:scheme} at $\lambda<(\sqrt2-1)/L$.}
\end{center}
Theorem~\ref{thm:trading} certifies this boundedness up to $\Lambda=0.387$
for the filter of Section~\ref{sec:numerics} --- beyond the
Lyapunov--Young barrier $\approx0.272$ of Appendix~\ref{app:barrier} ---
and $\Lambda^{\mathrm{crit}}(\beta)\to\sqrt2-1$ as $\bar\beta\to0$; inserting the
filter into Malitsky's own slots caps at $\approx0.265$.  The full range $(0,(\sqrt2-1)/L)$ for arbitrary
nonzero filters and general nonlinear $B$ remains open; its exact
performance-estimation formulation is recorded in Appendix~\ref{app:barrier}.
\end{remark}

\subsection{Affine operators and polyhedral constraints: distance control and rates}\label{sec:affine}
Throughout this subsection $B(x)=Mx+b$ with $M$ linear and monotone
(symmetric part positive semidefinite), $\|M\|\le L$, and $C$ is polyhedral.

\begin{lemma}[global error bound]\label{lem:xeb}
There is $\tau=\tau(M,b,C,\lambda_0)>0$ such that for all $x\in C$ and every
fixed $\lambda_0>0$,
\begin{equation}\label{eq:xeb}
\dist(x,S)\;\le\;\tau\,\bigl\|x-P_C\bigl(x-\lambda_0(Mx+b)\bigr)\bigr\|.
\end{equation}
\end{lemma}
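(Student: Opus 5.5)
The plan is to reduce \eqref{eq:xeb} to Robinson's theorem on upper Lipschitz continuity of polyhedral multifunctions, in its Hoffman-bound form. The setting is finite dimensional, as the surrounding subsection implicitly assumes. Write $R(x):=x-P_C(x-\lambda_0(Mx+b))$ for the natural residual.

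\emph{Step 1: piecewise-affine structure.} Since $C$ is polyhedral, $P_C$ is piecewise affine, with finitely many affine pieces indexed by the faces of $C$. Hence the map $x\mapsto R(x)$ is piecewise affine as well, and on each of finitely many polyhedral cells $Q_1,\dots,Q_m$ covering $\HH$ we have $R(x)=A_ix+c_i$.

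\emph{Step 2: the zero set.} The zero set $R^{-1}(0)=S$ is a finite union of polyhedra $S_i=\{x\in Q_i: A_ix+c_i=0\}$. Because $M$ is monotone and $B$ is affine, $S$ is also convex.

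\emph{Step 3: a global bound from a local one.} Robinson's theorem says the inverse of a polyhedral multifunction is locally upper Lipschitz. Applied to $R^{-1}$, it gives $\dist(x,S)\le\tau_0\|R(x)\|$ whenever $\|R(x)\|\le\varepsilon$. Equivalently, one can apply Hoffman's lemma cell by cell. On each cell $Q_i$ with $S_i\neq\emptyset$, Hoffman gives $\dist(x,S_i)\le h_i\|A_ix+c_i\|$ for $x\in Q_i$, using the linear system formed by the inequalities of $Q_i$ and the equation.

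Hoffman controls only the equation residual, because $x\in Q_i$ is already satisfied. The key point is that this control holds for \emph{all} $x\in Q_i$, not only near $S_i$, since Hoffman's constant is global for a linear system. So the bound is already global on every cell that meets $S$.

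\emph{Step 4: cells that miss $S$.} For cells with $S_i=\emptyset$, I must show $\inf_{Q_i}\|A_ix+c_i\|>0$ and, more strongly, that $\|R\|$ grows at least linearly in $\dist(x,S)$ there. The first part follows because the image of a polyhedron under an affine map is a polyhedron and hence closed, so $0\notin$ that image implies a positive distance $\rho_i$ from $0$.

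Linear growth is the delicate part, and I expect it to be the main obstacle. On an unbounded cell, $\dist(x,S)$ can grow while $\|R(x)\|$ stays bounded. A merely positive lower bound does not give \eqref{eq:xeb} globally.

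I would first try the standard route (Luo--Tseng; Facchinei--Pang, Prop.~6.3.x) for affine VIs over polyhedra. It proceeds by contradiction via recession directions. Suppose $x^k\in C$ with $\dist(x^k,S)/\|R(x^k)\|\to\infty$. Pass to a subsequence staying in a single cell $Q_i$ and normalize. If $\dist(x^k,S)$ stays bounded, the local Robinson bound together with the closedness bound $\rho_i$ gives a contradiction. If it is unbounded, take the direction $v=\lim x^k/\|x^k\|$. Then $v$ is a recession direction of $Q_i$ with $A_iv=0$, because $R$ stays sublinear relative to $\|x^k\|$.

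Next, use monotonicity of $M$ and the structure of $P_C$ on the recession cone to show that translating a solution along $v$ stays in $S$. This makes $\dist$ small relative to $\|x^k\|$. One iterates this on the dimension of the residual subspace, by induction on cells and on the lineality of the recession cone.

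If that induction becomes messy, the fallback is to cite the known global error bound for monotone affine VIs over polyhedra (Luo--Tseng 1992; Facchinei--Pang Thm.~6.3.x, where global validity uses monotonicity of the affine map). Dependence on $\lambda_0$ is harmless. Residuals at different $\lambda_0$ are equivalent up to factors $\min\{1,\lambda_0\}$ and $\max\{1,\lambda_0\}$ by the standard monotonicity of $\lambda\mapsto\|x-P_C(x-\lambda F)\|$ and of its ratio to $\lambda$. So it suffices to prove the bound for $\lambda_0=1$ and absorb the factor into $\tau$.
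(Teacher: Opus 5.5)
\textbf{Verdict: there is a genuine gap.} Your Steps 1--3 are sound. The step you flagged as the main obstacle, however, cannot be carried out at all, because the statement is false as written whenever $C$ is unbounded.

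\emph{Counterexample.} Take $\HH=\R$, $C=[0,\infty)$, $M=0$ and $b=1$; this operator is monotone and $L$-Lipschitz for every $L$. Then $S=\{0\}$, and for $x\ge0$ the residual is $x-P_C(x-\lambda_0)=\min\{x,\lambda_0\}$. So for $x\ge\lambda_0$ you have $\dist(x,S)=x$ while the residual stays equal to $\lambda_0$, and no $\tau$ works.

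\emph{Where your Step 4 breaks.} In your language, $Q=\{x\ge\lambda_0\}$ is an unbounded cell missing $S$ on which $R$ is constant, i.e.\ $A=0$. The direction $v=1$ is a recession direction of $Q$ with $Av=0$. Yet $0+tv\notin S$ for every $t>0$. So the claim that monotonicity lets you translate solutions along $v$ and stay in $S$ is false. The fallback citation cannot rescue this either: the example shows that no global natural-residual error bound for monotone affine VIs over polyhedra can exist. The Luo--Tseng-type results you have in mind are local (small residual) or restricted to bounded sets.

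\emph{Infinite dimensions.} Your restriction to finite dimensions is necessary, not a convenience. For $C=\HH=\ell^2$, $b=0$, $M=\diag(1/n)$ and $x=e_n$, the bound fails even locally, since $M$ need not have closed range.

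\emph{What your argument does prove.} Steps 1--3, together with your closedness constants $\rho_i$, already prove the correct local statement. Set $\rho:=\min\{\rho_i: S_i=\emptyset\}$ and $h:=\max_i h_i$. If $\|R(x)\|<\rho$, then $x$ lies only in cells meeting $S$, and Hoffman gives $\dist(x,S)\le h\|R(x)\|$. On any bounded set $D$ this upgrades to $\dist(x,S)\le\tau_D\|R(x)\|$ for all $x\in D\cap C$, with $\tau_D=\max\{h,\sup_D\dist(\cdot,S)/\rho\}$. In particular the bound holds on all of $C$ when $C$ is a polytope. This needs no recession-cone induction and no monotonicity beyond $S\neq\emptyset$.

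\emph{The paper's argument has the same limitation.} Robinson's theorem gives only \emph{local} upper Lipschitzness of $\Phi^{-1}$ at $0$, i.e.\ a bound when $\dist(0,\Phi(x))$ is small, not ``for all $x\in C$''. Moreover, the residual controls $\Phi$ at $p=x-r=P_C(x-\lambda_0(Mx+b))$, not at $x$: one has $\dist(0,\Phi(p))\le(1/\lambda_0+L)\|r\|$, and then $\dist(x,S)\le\|r\|+\dist(p,S)$. At $x$ itself, $\dist(0,\Phi(x))$ can be of order $1$ while $\|r\|$ is small; take $x=\varepsilon$ in the example above. So the paper's route does not supply your missing step either.

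\emph{Consequences for the paper.} The use in Theorem~\ref{thm:xaff} survives with the local or bounded-set version, since the trajectory is bounded and $\|r_n\|\to0$. The lemma should be restated accordingly, and your proof of that restated version is complete.
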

\begin{proof}
The mapping $\Phi(x)=Mx+b+N_C(x)$ has a graph that is a finite union of
polyhedral sets, one per face of $C$, since $N_C$ restricted to a face is a
fixed finitely generated cone.  Robinson's theorem on polyhedral
multifunctions \cite{robinson1981} (upper Lipschitzness at the zeros of
$\Phi$) yields $\dist(x,S)\le\tau_\Phi\dist(0,\Phi(x))$ for all $x\in C$.
For $x\in C$ write $r=x-P_C(x-\lambda_0(Mx+b))$; firm nonexpansiveness of $P_C$
gives $\langle Mx+b,r\rangle\ge\|r\|^2/\lambda_0$, and the
normal-cone map of a polyhedron is upper Lipschitz by Hoffman's lemma
\cite{hoffman1952}, so $\dist(0,\Phi(x))\le c(\lambda_0,L)\|r\|$.  Both
sub-inequalities are elementary consequences of
$\inner{w-P_Cw}{z-P_Cw}\le0$ applied at the pairs $(x-\lambda_0(Mx+b),x)$
and $(z-(Mz+b),z)$, $z\in C$; assembling them gives \eqref{eq:xeb} with
$\tau=\tau_\Phi c(\lambda_0,L)$.
\end{proof}

\begin{theorem}[affine polyhedral: distance control and rates]\label{thm:xaff}
Let $B$ be affine monotone $L$-Lipschitz and $C$ polyhedral, and let the
filter satisfy $0\le\beta_n\le\bar\beta\le\tfrac14$ and
$\sum_n\beta_n<\infty$.  If $\Lambda<\sqrt2-1$ satisfies the
dissipation-trading condition \eqref{eq:trading} --- for the filter
$\beta_k=0.25/(k+1)^2$ of Section~\ref{sec:numerics} this holds for all
$\Lambda\le0.387$, and as $\bar\beta\to0$ it holds for all
$\Lambda<\sqrt2-1$ --- then the filtered scheme satisfies
\[
\dist(x_n,S)\to0\qquad\text{and}\qquad\sum_n\dist^2(x_n,S)<\infty .
\]
If moreover $C$ is bounded and $\mathcal H$ is finite-dimensional, $(x_n)$
converges strongly to a point of $S$.  If the projection's active face is
identified in finite time, the convergence is $R$-linear.
\end{theorem}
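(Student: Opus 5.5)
Under \eqref{eq:trading}, Theorem~\ref{thm:trading} already gives boundedness, $M:=\sup_n\|x_n-z\|<\infty$, and weak convergence. It also gives summable dissipation: from Step~3 of its proof, $\tfrac{c_1}{2}\sum_n\|d_n\|^2<\infty$ and $\sum_n(c_2-p^B_n)\|x_{n+1}-y_n\|^2<\infty$. The plan is to convert these dissipations into control of the natural residual and then apply the error bound of Lemma~\ref{lem:xeb}.

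\textbf{Step 1 (residual bound).} Fix $\lambda_0=\lambda$ and set $R(x):=x-P_C(x-\lambda(Mx+b))$. Write $x_{n+1}=P_C(x_n-\lambda B(y_n)+\lambda\xi_n)$ and use nonexpansiveness of $P_C$ together with the Lipschitz bound on $B$. This gives
\[
\|R(x_n)\|\le\|d_{n+1}\|+\Lambda\|y_n-x_n\|+\lambda\|\xi_n\|
\le\|d_{n+1}\|+\Lambda\|d_n\|+\lambda L\beta_n\|e_{n-1}\| .
\]
Since $\|e_{n-1}\|$ is uniformly bounded (Step~4 of Theorem~\ref{thm:trading}) and $\sum\beta_n<\infty$, the last term is in $\ell^1\subset\ell^2$. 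The first two terms are in $\ell^2$ by the summed dissipation. Hence $\sum_n\|R(x_n)\|^2<\infty$. Because $x_n\in C$, Lemma~\ref{lem:xeb} then gives $\sum_n\dist^2(x_n,S)\le\tau^2\sum_n\|R(x_n)\|^2<\infty$, and in particular $\dist(x_n,S)\to0$. For the filter term it is cleaner to use $\beta_n^2\le\bar\beta\beta_n$ and square-sum directly.

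\textbf{Step 2 (strong convergence).} If $\dim\HH<\infty$, weak convergence is strong convergence, so $x_n\to x^\star\in S$ follows immediately from Theorem~\ref{thm:trading}. Boundedness of $C$ is then only a convenience: it makes $\sup\|e_n\|$ trivial via Corollary~\ref{cor:xbounded}. If one wants an argument that does not use weak compactness, take $z_n:=P_S x_n$ and use the Fejér-type quasi-monotonicity of $\Phi_n(z)$ from Step~5 of Lemma~\ref{lem:xrobust}. Since $\dist(x_n,S)\to0$, the sequence $(z_n)$ is Cauchy up to summable drift, and $x_n-z_n\to0$.

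\textbf{Step 3 (R-linear rate).} Suppose that from some $n_0$ on the projection lands in a fixed face $F$ of $C$. Then $P_C$ acts as an affine map (the projection onto $\mathrm{aff}\,F$) and the iteration becomes a linear recursion. I would combine the master inequality \eqref{eq:xmaster} with the error bound applied to the Lyapunov gap $a_n(z_n)-a_\infty$. The goal is a contraction $V_{n+1}\le(1-\kappa)V_n+O(\beta_n)$, where $V_n:=\dist^2(x_n,S)+\|d_n\|^2+\|x_n-y_{n-1}\|^2$. The dissipation $c_1\|d_n\|^2+c_2\|x_{n+1}-y_n\|^2$ dominates $\|R(x_n)\|^2$, and hence dominates $\tau^{-2}\dist^2(x_n,S)$.

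\textbf{Main obstacle.} The hardest part is the filter perturbation: it decays only like $\beta_n$, which need not be geometric. I would therefore state the rate as geometric up to the filter tail, or assume $\beta_n$ decays geometrically. Second, anchoring the Lyapunov function at the moving point $z_n$ costs inner-product terms $\inner{B(z_n)}{x_{n-1}-z_n}$. These have to be controlled using the fact that $B$ is constant on $S$ in the affine monotone case (since $M$ maps $S-S$ into $\ker(M+M^{\top})$), which is the step I expect to require the most care.
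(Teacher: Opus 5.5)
\emph{Steps 1 and 2.} Both are correct. Step 1 is the paper's Step 2 with a cosmetic change. The paper compares with $P_C(x_n-\lambda B(u_n))$ to get $\|R(x_n)\|\le\|d_{n+1}\|+\Lambda\|e_n\|$, then uses $\sum_n\|e_n\|^2\le(1-\bar\beta)^{-2}\sum_n\|d_n\|^2$, so it needs no separate bound on $\sup_n\|e_n\|$.

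Step 2 is a genuine simplification of the paper's Step 3, which goes through compactness of $S$, the points $z_n=P_Sx_n$, and convergence of $a_n(z)$. In finite dimensions the weak convergence from Theorem~\ref{thm:trading} is already norm convergence, so boundedness of $C$ is indeed superfluous. Drop your alternative ``Cauchy up to summable drift'' sketch. Nonexpansiveness of $P_S$ gives only $\|z_{n+1}-z_n\|\le\|d_{n+1}\|$, which is square-summable, not summable.

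\emph{The gap is Step 3.} It does not prove the stated claim. You settle for ``geometric up to the filter tail'', or add geometric decay of $\beta_n$ as a hypothesis. The theorem, however, asserts an $R$-linear rate for every admissible summable filter, e.g.\ $\beta_n=0.25/(n+1)^2$.

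This retreat rests on a misdiagnosis: the filter never acts as an additive $O(\beta_n)$ forcing. Because $B$ is affine, $\xi_n=B(y_n)-B(u_n)=\beta_nMe_{n-1}$ is $\beta_n$ times a state variable. Hence every term of $\eta_n$ in \eqref{eq:xmaster} is $O(\beta_{n-1}+\beta_n)$ times a product of two state quantities, e.g.\ $\|e_{n-1}\|\,\|x_{n+1}-z\|$. A perturbation that is small \emph{relative} to the state is eventually absorbed by any strict contraction. It cannot slow a geometric rate, however slowly $\beta_n\to0$.

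Moreover, you never derive the contraction $V_{n+1}\le(1-\kappa)V_n$ that you set as your goal. This is exactly the error-bound residual-contraction argument that the paper says does not carry over verbatim to the reflected update. It is posed as Open Problem~\ref{open:eb-rate}, even for geometrically decaying filters.

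The obstruction is the moving anchor, and your proposed remedy is false: an affine monotone $B$ need not be constant on $S$. Monotonicity only yields $S-S\subseteq\ker(M+M^{\top})$, hence $\inner{B(z)}{z'-z}=0$ for $z,z'\in S$. It does not give $M(S-S)=\{0\}$. For example, take the rotation $B=J$ on the polyhedron $C=\R\times\{0\}$: every point of $C$ solves the VI, yet $B(t,0)=(0,t)$.

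\emph{The missing idea.} Use the identification hypothesis, which you state and then abandon. Suppose $x_{n+1}\in\mathrm{relint}\,F$ for all $n\ge n_0$. Then along the trajectory $P_C$ coincides with the affine projection onto $\mathrm{aff}\,F$. The state $(x_n-x^\star,x_{n-1}-x^\star,u_{n-1}-x^\star)$ then obeys a \emph{homogeneous} linear recursion, with iteration matrices that depend only on $\beta_n$. There is no forcing term; the filter only perturbs the matrix.

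The paper then combines two ingredients to obtain $\|x_n-x^\star\|\le C\rho^n$ for every summable filter:
\begin{itemize}
\item the frozen spectral threshold of Theorem~\ref{thm:affine-sqrt3}, which applies on the face since $\Lambda<\sqrt2-1<1/\sqrt3$;
\item the slowly-varying Lemma~\ref{lem:tv}, whose hypothesis holds because $\sum_n|\beta_{n+1}-\beta_n|\le2\sum_n\beta_n<\infty$.
\end{itemize}
Lemma~\ref{lem:tv} requires frozen spectral radius $<1$. So one must first split off the semisimple eigenvalue $1$ carried by the kernel of the face operator, as in the proof of Theorem~\ref{thm:affine-sqrt3}.
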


\begin{proof}
\emph{Step 1 (boundedness, dissipation, weak convergence).}
By Theorem~\ref{thm:trading}, $(x_n)$ is bounded, converges weakly to a point
$x^\star\in S$, and $\sum_n\|d_n\|^2<\infty$,
$\sum_n\|x_{n+1}-y_n\|^2<\infty$, $\|e_n\|\to0$ (these are the conclusions of
Lemma~\ref{lem:xrobust} invoked in the last step of
Theorem~\ref{thm:trading}).

\emph{Step 2 (error-bound control of the distance).}
Apply Lemma~\ref{lem:xeb} with $\lambda_0=\lambda$ (any fixed step is
admissible there) and write
$r_n:=x_n-P_C\bigl(x_n-\lambda(Mx_n+b)\bigr)$.  Firm nonexpansiveness of $P_C$
gives
\[
\|r_n\|\ \le\ \|x_n-x_{n+1}\|+\lambda\|B(u_n)-B(x_n)\|
\ \le\ \|d_{n+1}\|+\Lambda\|e_n\|,
\]
hence, with $\tau=\tau(M,b,C,\lambda)>0$,
\[
\dist^2(x_n,S)\ \le\ 2\tau^2\|d_{n+1}\|^2+2\tau^2\Lambda^2\|e_n\|^2 .
\]
Iterating the filter recursion,
$\sum_n\|e_n\|^2\le(1-\bar\beta)^{-2}\sum_n\|d_n\|^2<\infty$; consequently
$\dist(x_n,S)\to0$ and
\[
\sum_n\dist^2(x_n,S)\ \le\
2\tau^2\Bigl(1+\frac{\Lambda^2}{(1-\bar\beta)^2}\Bigr)\sum_n\|d_n\|^2\
<\ \infty .
\]

\emph{Step 3 (strong convergence when $C$ is bounded and
$\dim\mathcal H<\infty$).} Then $S$ is compact.  Let $z_n:=P_S(x_n)\in S$; by
Step 2, $\|x_n-z_n\|=\dist(x_n,S)\to0$.  A subsequence $z_{n_i}$ converges
strongly to some $\bar x\in S$, and then $x_{n_i}\to\bar x$ strongly; by
uniqueness of the weak limit (Step 1), $\bar x=x^\star$.  It remains to
upgrade the full sequence.  For $z\in S$ set
$a_n(z):=\|x_n-z\|^2+\Lambda\|x_n-y_{n-1}\|^2
+2\lambda\langle Mz+b,x_{n-1}-z\rangle$ (the Lyapunov function of
Lemma~\ref{lem:xrobust}).  With the exogenous errors
$\xi_n:=B(y_n)-B(u_n)$ we have
$\sum_n\|\xi_n\|\le L(1-\bar\beta)^{-1}\sup_n\|e_n\|\sum_n\beta_n<\infty$ by
Step 1, and the master inequality \eqref{eq:xmaster} of
Lemma~\ref{lem:xrobust} holds verbatim, since the reduction
\eqref{eq:red-x}--\eqref{eq:prgerr} is exact.  Hence
$a_{n+1}(z)\le a_n(z)+\eta_n(z)$ with $\sum_n\eta_n^+(z)<\infty$ for every
fixed $z\in S$ (the drift is bounded by
$2\lambda\|\xi_n\|(1+\|x_{n+1}-z\|^2)
+2\lambda\|\xi_{n-1}\|(1+\|x_{n+1}-y_n\|^2)$, uniformly on bounded $z$-sets).
Since $a_n(z)\ge-2\lambda\|Mz+b\|\sup_m\|x_m-z\|$, Lemma~\ref{lem:pospart}
gives convergence of $a_n(z)$, hence of $\|x_n-z\|^2$, for every $z\in S$.
Finally
\[
2\lambda\langle Mz+b,x_{n-1}-z\rangle
=2\lambda\langle Mz+b,x_{n-1}-z_{n-1}\rangle
+2\lambda\langle Mz+b,z_{n-1}-z\rangle ,
\]
where the first term tends to $0$ (it is bounded in absolute value by
$2\lambda\|Mz+b\|\dist(x_{n-1},S)$) and the second converges to
$2\lambda\langle Mz+b,x^\star-z\rangle$ (as $z_{n-1}\rightharpoonup x^\star$);
also $\|x_n-y_{n-1}\|\le\|d_n\|+\|d_{n-1}\|\to0$.  Therefore
$\|x_n-z\|^2$ converges for every $z\in S$; applied at $z=x^\star$, the
sequence $\|x_n-x^\star\|^2$ converges, and its limit equals its value along the
subsequence $(n_i)$, namely $0$; hence $x_n\to x^\star$ strongly.

\emph{Step 4 (rate after identification).} Once the active face is
identified, the iteration is an affine filtered recursion on a subspace whose
frozen system has spectral radius $<1$ at $\Lambda<1/\sqrt3$
(Theorem~\ref{thm:affine-sqrt3}) and whose filter variation is summable
($\sum_n|\beta_{n+1}-\beta_n|\le2\sum_n\beta_n<\infty$); the slowly-varying
argument of Lemma~\ref{lem:tv} then gives $\|x_n-x^\star\|\le C\rho^n$
geometrically.
\end{proof}

\begin{remark}[funding-free accounting]\label{rem:xaccount}
No ``contraction funding'' against the dissipation channels is needed:
Theorem~\ref{thm:trading} already supplies $\sum_n\|d_n\|^2<\infty$, and the
polyhedral error bound converts this directly into square-summability of
$\dist(x_n,S)$; the range is therefore limited only by condition
\eqref{eq:trading}, not by the error-bound constant $\tau$.  The
strong-convergence step uses compactness of $S$; for unbounded $C$ in infinite
dimensions, weak convergence together with $\dist(x_n,S)\to0$ is what is
proved here, and the upgrade to strong convergence is left open (compare
Remark~\ref{rem:xgap}).  In particular, unlike the first version of this
theorem, no window-contraction lemma is used, and the projection inequality
$\|P_Sx-P_Sy\|\le\|x-y\|+2\dist(y,S)$ that appeared there is false in
general; the correct inequality
$\|P_Sx-P_Sy\|\le\|x-y\|+\dist(x,S)+\dist(y,S)$ is all that the present
argument requires, and it is used only through
$\|z_{n+1}-z_n\|\le\|d_{n+1}\|+\dist(x_n,S)+\dist(x_{n+1},S)\to0$.
\end{remark}

\begin{remark}[the sharp constant $1/\sqrt3$ via identification]\label{rem:xident}
Numerically (and under nondegeneracy, provably) the projection's active face
settles in finite time; on the optimal face the iteration is an
\emph{unconstrained} affine filtered recursion on a subspace, to which the
sharp spectral threshold $\lambda\|M|_{\mathrm{face}}\|<1/\sqrt3$
(Theorem~\ref{thm:affine-sqrt3}, plus the slowly-varying argument of
Lemma~\ref{lem:tv}, since $\beta_n$ is summable) applies.  If \emph{finite
identification of the optimal face for PRG on polyhedral sets} holds in
general (the only missing ingredient), Theorem~\ref{thm:xaff} upgrades to
convergence for all $\lambda\|M\|<1/\sqrt3$, matching the unconstrained sharp
constant.
\end{remark}

\section{Adaptive step size}\label{sec:adaptive}
\begin{theorem}[adaptive step size]\label{thm:adaptive}
Under Assumption~\ref{ass:main}, let $\lambda_0\in(0,\tfrac1{5L}]$,
$\lambda_{\min}\in(0,\lambda_0)$, and let $(\lambda_k)_{k\ge0}$ satisfy
\begin{equation}\label{eq:lamhyp}
0<\lambda_{\min}\le\lambda_k\le\lambda_0\ \ \forall k,
\qquad
\textstyle\sum_k|\lambda_k-\lambda_{k-1}|<\infty .
\end{equation}
(No monotonicity of $(\lambda_k)$ is required; in particular the growth
variant of Rule~\ref{rule:main} is admissible.)  Let $0\le\beta_k\le\tfrac14$,
$\sum_k\beta_k<\infty$, $\theta_k=1-\beta_k$,
$x_{-1}=x_0=u_0\in C$, and define
\begin{equation}\label{eq:scheme-ad}
x_{k+1}=P_C\bigl(x_k-\lambda_k B(u_k)\bigr),\qquad
u_k=x_k+\theta_k d_k+\beta_k(x_k-u_{k-1}),\qquad d_k=x_k-x_{k-1}.
\end{equation}
Then $(x_k)$ converges weakly to a point of $S$, with
$\|x_{k+1}-x_k\|\to0$ and $\|u_k-x_k\|\to0$.
\end{theorem}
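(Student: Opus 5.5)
The plan is to rerun the eight steps of the proof of Theorem~\ref{thm:main} with $\lambda_k$ in place of $\lambda$. The only new objects are the terms created when the step changes, and these will be controlled by the bounded-variation hypothesis \eqref{eq:lamhyp} through Lemma~\ref{lem:gronwall2}.

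\emph{Steps~1--3 (estimates with variable step).} Step~1 is unchanged with $\lambda_k$, since firm nonexpansiveness and monotonicity are used at a single iteration. In Step~2 the projection identity at $x_k=P_C(x_{k-1}-\lambda_{k-1}B(u_{k-1}))$ yields $2\lambda_{k-1}\inner{B(u_{k-1})}{x_k+d_k-x_{k+1}}\le\|d_{k+1}\|^2-\|d_k\|^2-\|d_{k+1}-d_k\|^2$. I will therefore split
\[
\lambda_kB(u_k)=\lambda_k\bigl(B(u_k)-B(u_{k-1})\bigr)+\lambda_{k-1}B(u_{k-1})+(\lambda_k-\lambda_{k-1})B(u_{k-1}).
\]
The first two pieces reproduce \eqref{eq:step2} with $\lambda_k L\le\lambda_0L\le\tfrac15$. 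The Young charges in Step~3 are monotone in $\lambda L$, so the three budgets $-\tfrac1{45},-\tfrac1{25},-\tfrac6{125}$ persist. The third piece is the new variation term
\[
2(\lambda_k-\lambda_{k-1})\inner{B(u_{k-1})}{u_k-x_{k+1}}.
\]
I will bound it by $|\lambda_k-\lambda_{k-1}|\bigl(\|B(u_{k-1})\|^2+\|u_k-x_{k+1}\|^2\bigr)$. Next I use $\|B(u_{k-1})\|\le L\|u_{k-1}-z\|+\|B(z)\|$, together with the fact that $\|u_{k-1}-z\|$ and $\|u_k-x_{k+1}\|$ are bounded by constants times $\bar r_{k}+\bar r_{k+1}$ via the filter recursion \eqref{eq:rec} and $\|d_{k+1}\|\le\lambda_0\|B(u_k)\|$. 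This gives a drift $\kappa_k+\epsilon'_k\bar r^2$ with $\kappa_k,\epsilon'_k\asymp|\lambda_k-\lambda_{k-1}|$, which is summable.

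\emph{Step 4 (Lyapunov weights).} In the Lyapunov function $a_k$ of \eqref{eq:lyap} I will use the weights $2\lambda_{k-1}L\|x_k-u_{k-1}\|^2$ and $2\lambda_{k-1}\inner{B(z)}{x_{k-1}-z}$. Their mismatch with the $\lambda_k$-weighted terms produced at step $k$ gives further terms with factor $|\lambda_k-\lambda_{k-1}|$, of the form $2|\lambda_k-\lambda_{k-1}|\bigl(L\|x_{k+1}-u_k\|^2+\|B(z)\|\,r_k\bigr)$. These are absorbed into the same $(\epsilon_k,\kappa_k)$ channels after $r_k\le 1+r_k^2$. The anchor term $-2\lambda_k\inner{B(z)}{x_k-z}$ is dropped by sign, as in Remark~\ref{rem:channels}, and the filter term is handled exactly as in \eqref{eq:T2}--\eqref{eq:tele} with $\lambda_k\le\lambda_0$.

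\emph{Step 5 (boundedness) --- the main obstacle.} The difficulty is that the variation drift involves the current and next radii $r_k,r_{k+1}$, not only $r_{k-1}$, whereas Lemma~\ref{lem:gronwall2} wants past indices. I will first bound $r_{k+1}\le r_k+\lambda_0(L\|u_k-z\|+\|B(z)\|)$ and control $\|u_k-z\|$ by $\bar r_k$ through \eqref{eq:rec}, so that all drift is expressed in $\bar r_k^2$. The summed recursion then has the form $r_k^2\le A+2\lambda_0\|B(z)\|r_{k-1}+\sum_{j<k}(\epsilon_j\bar r_j^2+\kappa_j)$. Lemma~\ref{lem:gronwall2}, or Lemma~\ref{lem:gronU} with running maxima, closes it once the tail $\sum_{j\ge J}\epsilon_j\le\tfrac14$.

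\emph{Steps~6--8 (conclusion).} With $M=\sup r_k$, summing the master inequality gives $\sum b_k<\infty$, hence $d_k\to0$ and $e_k\to0$. For Step~7 I divide the projection inequality by $\lambda_{k_i}\ge\lambda_{\min}>0$; this is where the floor matters. For Step~8, $\lambda_k$ converges because of bounded variation, say $\lambda_k\to\lambda_\infty\ge\lambda_{\min}$. Lemma~\ref{lem:pospart} then gives convergence of $a_k(z)$, and hence of $\Phi_k(z)=\|x_k-z\|^2+2\lambda_{k-1}\inner{B(z)}{x_{k-1}-z}$. The Opial chain then runs verbatim with $\lambda_\infty$ in the limits.
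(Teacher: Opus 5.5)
Your proposal is correct and shares the paper's skeleton: the projection identity at step $\lambda_{k-1}$, Lyapunov weights $2\lambda_{k-1}L$ and $2\lambda_{k-1}\inner{B(z)}{x_{k-1}-z}$, summable variation drift, Gronwall, Minty, and Opial. Where you differ is in how the variation terms are disposed of.

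The paper places the mismatch only on the slot $x_k+d_k-x_{k+1}$. It keeps $\lambda_k$ on the filter term, so the $s_{k-1}$ cancellation stays exact, and bounds $\mathrm{R}_k$ via \eqref{eq:R1b}. It then \emph{absorbs} the quadratic pieces of $\mathrm{R}_k$ into the dissipation budgets once $|\Delta\lambda_k|$ is small. The memory residual $2L\Delta\lambda_k\|x_k-u_{k-1}\|^2\le4L|\Delta\lambda_k|(\|d_k\|^2+\|e_{k-1}\|^2)$ is absorbed the same way. This holds for $k\ge k_0$, with the budgets shrunk to $\tfrac1{90},\tfrac1{50},\tfrac9{250}$. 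Only $r_{k-1}^2$ and constants become drift, so Lemma~\ref{lem:gronwall2} applies as stated.

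You keep the corner budgets intact and push every variation term into drift of order $|\Delta\lambda_k|(1+\bar r_k^2)$. You do this via the a priori bounds $\|e_k\|\le\tfrac{2}{1-\bar\beta}\bar r_k$ and $\|d_{k+1}\|\le\lambda_k\|B(u_k)\|$, the latter valid because $x_k\in C$. You then close with the running-maximum Lemma~\ref{lem:gronU}. Your route needs no threshold $k_0$ and no budget surgery. The paper's route keeps the drift on the strictly past radius $r_{k-1}$ and so can use the simpler Gronwall lemma.

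Two bookkeeping slips are harmless in your scheme:
\begin{itemize}
\item With your split, the filter term carries $\lambda_{k-1}$. The $s_{k-1}$ terms therefore leave a residual $2\Delta\lambda_k\beta_ks_{k-1}$ instead of cancelling exactly.
\item The Lyapunov mismatch is actually $2L\Delta\lambda_k\|x_k-u_{k-1}\|^2+2\Delta\lambda_k\inner{B(z)}{x_{k-1}-z}$. It is not the terms in $\|x_{k+1}-u_k\|^2$ and $r_k$ that you wrote.
\end{itemize}
Both are again $O(|\Delta\lambda_k|(1+\bar r_k^2))$ and go into your drift.
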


\begin{proof}
Fix $z\in S$ and write $d_k,e_k,s_k,\delta_k,r_k$ as in the proof of
Theorem~\ref{thm:main}: $e_k=u_k-x_k$, $s_k=\inner{B(z)}{e_k}$,
$\delta_k=\inner{B(z)}{d_k}$, $r_k=\|x_k-z\|$, $\bar\beta:=\sup_k\beta_k$,
$B_z:=\|B(z)\|$, and note the recursion $e_k=d_k-\beta_ke_{k-1}$ and the
slot identity $u_k=x_k+d_k-\beta_ke_{k-1}$ from $\theta_k+\beta_k=1$.

\medskip\noindent\emph{Step 1 (fundamental estimate; step $\lambda_k$).}
Firm nonexpansiveness with $x=x_k-\lambda_kB(u_k)$ and monotonicity at
$(u_k,z)$ give
\begin{equation}\label{eq:step1-ad}
\|x_{k+1}-z\|^2\le\|x_k-z\|^2-\|d_{k+1}\|^2
+2\lambda_k\inner{B(u_k)}{u_k-x_{k+1}}-2\lambda_k\inner{B(z)}{u_k-z}.
\end{equation}

\medskip\noindent\emph{Step 2 (projection identity; step $\lambda_{k-1}$).}
Since $x_k=P_C(x_{k-1}-\lambda_{k-1}B(u_{k-1}))$, the projection
characterization at $y=x_{k+1}$ and $y=x_{k-1}$, added, yields via the
cosine identity
\begin{equation}\label{eq:projid-ad}
2\lambda_{k-1}\inner{B(u_{k-1})}{x_k+d_k-x_{k+1}}
\le\|d_{k+1}\|^2-\|d_k\|^2-\|d_{k+1}-d_k\|^2 .
\end{equation}
Splitting $B(u_k)=B(u_k)-B(u_{k-1})+B(u_{k-1})$ in \eqref{eq:step1-ad} and using
$u_k-x_{k+1}=(x_k+d_k-x_{k+1})-\beta_ke_{k-1}$,
\begin{align}
2\lambda_k\inner{B(u_k)}{u_k-x_{k+1}}
&=2\lambda_k\inner{B(u_k)-B(u_{k-1})}{u_k-x_{k+1}}
+2\lambda_{k-1}\inner{B(u_{k-1})}{x_k+d_k-x_{k+1}}\notag\\
&\quad+2(\lambda_k-\lambda_{k-1})\inner{B(u_{k-1})}{x_k+d_k-x_{k+1}}
-2\lambda_k\beta_k\inner{B(u_{k-1})}{e_{k-1}} .
\label{eq:split}
\end{align}
The first term is T$_1$; the second is bounded by \eqref{eq:projid-ad} and
cancels $\|d_{k+1}\|^2$ in \eqref{eq:step1-ad}; the third is the
\emph{step-mismatch residual}
\begin{equation}\label{eq:R1}
\mathrm{R}_k:=2(\lambda_k-\lambda_{k-1})\inner{B(u_{k-1})}{x_k+d_k-x_{k+1}} .
\end{equation}

\medskip\noindent\emph{Step 3 (bounds).}
Exactly as in Step~3 of the proof of Theorem~\ref{thm:main}, with
$\lambda_k$ in place of $\lambda$ (the resulting coefficients are then
bounded by their values at $\lambda_0$, since all budget coefficients are
increasing in the step),
\begin{equation}\label{eq:T1-ad}
\mathrm{T}_1\le\lambda_kL\Bigl[2\|e_k\|^2+\tfrac85\|d_{k+1}-d_k\|^2
+2\|x_k-u_{k-1}\|^2+\tfrac16\|e_{k-1}\|^2\Bigr],
\end{equation}
\begin{equation}\label{eq:T2-ad}
-2\lambda_k\beta_k\inner{B(u_{k-1})}{e_{k-1}}
\le\lambda_k\beta_kL\bigl(25\,r_{k-1}^2+\tfrac1{25}\|e_{k-1}\|^2\bigr)
-2\lambda_k\beta_ks_{k-1}
\end{equation}
(monotonicity drop of $\inner{B(u_{k-1})-B(x_{k-1})}{e_{k-1}}\ge0$; $\alpha=25$),
and the $s_{k-1}$ term cancels \emph{exactly} against the
$+\lambda_k\beta_k$-part of
$-2\lambda_k\inner{B(z)}{u_k-z}
=-2\lambda_k\inner{B(z)}{x_k-z}-2\lambda_k\delta_k+2\lambda_k\beta_ks_{k-1}$.
For $\mathrm{R}_k$, with $\|x_k+d_k-x_{k+1}\|\le\|d_{k+1}-d_k\|+\bar\beta\|e_{k-1}\|$
and the sharp bound
\begin{eqnarray}\label{eq:R1b}
\|B(u_{k-1})\|&\le& L\|u_{k-1}-z\|+B_z\nonumber \\
&\le& L\bigl(r_{k-1}+\|e_{k-1}\|\bigr)+B_z\nonumber \\
\ \Longrightarrow\
\|B(u_{k-1})\|^2&\le& 4L^2r_{k-1}^2+4L^2\|e_{k-1}\|^2+2B_z^2
\end{eqnarray}
(note $u_{k-1}=x_{k-1}+e_{k-1}$, so the $e_{k-1}$-term is unavoidable at this
stage),
\[
|\mathrm{R}_k|\le|\Delta\lambda_k|\Bigl(4L^2r_{k-1}^2+2B_z^2
+2\|d_{k+1}-d_k\|^2+\bigl(2\bar\beta^2+4L^2\bigr)\|e_{k-1}\|^2\Bigr),
\qquad \Delta\lambda_k:=\lambda_k-\lambda_{k-1}.
\]
The $\|d_{k+1}-d_k\|^2$ and $\|e_{k-1}\|^2$ parts have summable weights
($\sum_k|\Delta\lambda_k|<\infty$) and are absorbed into the budgets of Step 4
below; the $r_{k-1}^2$ and constant parts join the drift channel.

\medskip\noindent\emph{Step 4 (master estimate).}
Define
\[
a_k:=\|x_k-z\|^2+2\lambda_{k-1}L\|x_k-u_{k-1}\|^2
+2\lambda_{k-1}\inner{B(z)}{x_{k-1}-z}+\tfrac13\|e_{k-1}\|^2 .
\]
Assembling \eqref{eq:step1-ad}--\eqref{eq:R1b}, using
$\|e_k\|^2\le\tfrac43\|d_k\|^2+\tfrac14\|e_{k-1}\|^2$ and
$2\lambda_kL\|x_{k+1}-u_k\|^2
\le2\lambda_0L\bigl(\tfrac85\|d_{k+1}-d_k\|^2+\tfrac16\|e_{k-1}\|^2\bigr)$,
two things happen with the $\|x_k-u_{k-1}\|^2$ channel.  First, its net
coefficient is $2L\Delta\lambda_k$: when $\Delta\lambda_k\le0$ (safeguard
cuts) it is nonpositive and dropped for free; when $\Delta\lambda_k>0$
(growth steps) it is absorbed, since
\[
2L\Delta\lambda_k\|x_k-u_{k-1}\|^2
\le 4L|\Delta\lambda_k|\bigl(\|d_k\|^2+\|e_{k-1}\|^2\bigr),
\qquad \textstyle\sum_k4L|\Delta\lambda_k|<\infty .
\]
Second, the $\inner{B(z)}{x_k-z}$ and $\delta_k$ terms leave the residual
$2\Delta\lambda_k\inner{B(z)}{x_{k-1}-z}$, which satisfies
\begin{equation}\label{eq:rho}
\bigl|2\Delta\lambda_k\inner{B(z)}{x_{k-1}-z}\bigr|
\le|\Delta\lambda_k|B_z\bigl(1+r_{k-1}^2\bigr).
\end{equation}
Since $|\Delta\lambda_k|\to0$, choose $k_0$ such that for all $k\ge k_0$
\[
4L|\Delta\lambda_k|\le\tfrac1{90},\qquad
2|\Delta\lambda_k|\le\tfrac1{50},\qquad
\bigl(2\bar\beta^2+4L^2+4L\bigr)|\Delta\lambda_k|\le\tfrac3{250};
\]
then for $k\ge k_0$,
\begin{equation}\label{eq:master-ad}
a_{k+1}\le a_k-b_k+\epsilon_k\,r_{k-1}^2+\kappa_k ,
\end{equation}
with
\begin{align}
\epsilon_k&:=25\lambda_0L\,\beta_k+|\Delta\lambda_k|\bigl(4L^2+B_z\bigr),
&\textstyle\sum_k\epsilon_k&<\infty, \notag\\
\kappa_k&:=|\Delta\lambda_k|\bigl(2B_z^2+B_z\bigr),
&\textstyle\sum_k\kappa_k&<\infty, \notag\\
b_k&:=\Bigl(\tfrac1{45}-4L|\Delta\lambda_k|\Bigr)\|d_k\|^2
+\Bigl(\tfrac1{25}-2|\Delta\lambda_k|\Bigr)\|d_{k+1}-d_k\|^2\notag\\
&+\Bigl(\tfrac6{125}-\bigl(2\bar\beta^2+4L^2+4L\bigr)|\Delta\lambda_k|\Bigr)\|e_{k-1}\|^2 \notag\\
&\ \ge\ \tfrac1{90}\|d_k\|^2+\tfrac1{50}\|d_{k+1}-d_k\|^2+\tfrac9{250}\|e_{k-1}\|^2\ge0, \notag
\end{align}
where the three rational budgets are exactly those of \eqref{eq:bk} at
$\lambda_0L=\tfrac15$ (their coefficients carry the factor $\lambda_k\le\lambda_0$
and are increasing in $\lambda_k$, hence are uniformly bounded by the corner
values), and the finitely many exceptions $k<k_0$ are folded into a constant
absorbed in $a_{k_0}$. (Assembly as in Appendix~\ref{app:assembly}, with
$\lambda_k$ in place of $\lambda$ and the variation terms of Step~3.)

\medskip\noindent\emph{Step 5 (boundedness).}
Summing \eqref{eq:master-ad} for $k\ge k_0$ and dropping $-b_k$,
$a_n\le a_{k_0}+\sum_{j<n}\bigl(\epsilon_jr_{j-1}^2+\kappa_j\bigr)$; from the
definition of $a_k$, $r_k^2\le a_k+2\lambda_0B_zr_{k-1}$.  Hence $r_k$
satisfies \eqref{eq:gronrec2} with $A=a_{k_0}+\sum_j\kappa_j$ and
$\gamma=2\lambda_0B_z$, and Lemma~\ref{lem:gronwall2} gives $M:=\sup_kr_k<\infty$.

\medskip\noindent\emph{Step 6 (vanishing increments).}
From \eqref{eq:master-ad}, $\sum_kb_k\le a_{k_0}+M^2\sum_j\epsilon_j+\sum_j\kappa_j<\infty$,
so $\|d_k\|\to0$, $\|d_{k+1}-d_k\|\to0$, $\|e_{k-1}\|\to0$.

\medskip\noindent\emph{Step 7 (cluster points solve the VI).}
Let $x_{k_i}\rightharpoonup\bar x$.  Since $\lambda_{k_i}\to\lambda_\infty$
with $\lambda_\infty\in[\lambda_{\min},\lambda_0]$, $\lambda_\infty>0$, the
projection characterization with $x=x_{k_i}-\lambda_{k_i}B(u_{k_i})$,
$\bar x=x_{k_i+1}$ and $y\in C$, monotonicity at $(y,u_{k_i})$, and
Steps 5--6 give, in the limit,
$\lambda_\infty\inner{B(y)}{y-\bar x}\ge0$ for all $y\in C$; as
$\lambda_\infty>0$, Minty's lemma yields $\bar x\in S$.
(Note the strict positivity $\lambda_\infty\ge\lambda_{\min}>0$ is what makes
this step work; it is also why $\lambda_k\to0$ must be excluded.)

\medskip\noindent\emph{Step 8 (Opial).}
For $z\in S$, $\Phi_k(z):=\|x_k-z\|^2+2\lambda_{k-1}\inner{B(z)}{x_{k-1}-z}$
converges: $a_k(z)$ converges by \eqref{eq:master-ad} (positive parts summable,
$a_k\ge-2\lambda_0B_zM$), and $\|x_k-u_{k-1}\|,\|e_{k-1}\|\to0$.  If
$\bar x,\tilde x\in S$ are distinct weak cluster points, the chain of Step~8
in the proof of Theorem~\ref{thm:main} applies verbatim with
$\lambda_\infty$ in place of $\lambda$ (the two non-strict steps rest on
$\inner{B(\tilde x)}{\bar x-\tilde x}\ge0$ and
$\inner{B(\bar x)}{\tilde x-\bar x}\ge0$, which are $\lambda$-free since
$\bar x\in C$, $\tilde x\in S$), yielding
$\lim_k\Phi_k(\bar x)<\lim_k\Phi_k(\bar x)$, a contradiction.  Hence
$x_k\rightharpoonup x^\star\in S$.
\end{proof}
\section{Linear convergence under strong monotonicity}\label{sec:linear}

We now strengthen Assumption~\ref{ass:main} to \emph{strong monotonicity} and
prove that the scheme converges $R$-linearly with an explicit contraction
factor, at no extra computational cost. The rate is obtained by a small,
exactly verified modification of the Step~1--4 bookkeeping of
Theorem~\ref{thm:main}: strong monotonicity supplies one additional
dissipation channel, and the Young splits are retuned so that every budget
identity remains an exact rational.

\begin{assumption}[strong monotonicity]\label{ass:strong}
In addition to Assumption~\ref{ass:main}, $B$ is $\sigma$-strongly monotone
for some $\sigma>0$:
\[
\inner{B(x)-B(y)}{x-y}\ge\sigma\|x-y\|^2\qquad\forall x,y\in\HH .
\]
Then $S=\{z\}$ is a singleton.
\end{assumption}

We need one more Gronwall lemma, upgrading Lemma~\ref{lem:gronwall} from
boundedness to a geometric rate when a contraction factor is present.

\begin{lemma}[geometric Gronwall with summable perturbation]\label{lem:geogron}
Let $\rho\in(0,1)$, $(a_k)_{k\ge0}\subset[0,\infty)$, and
$(\epsilon_k)_{k\ge0}\subset[0,\infty)$ with $\sum_k\epsilon_k<\infty$ such
that
\begin{equation}\label{eq:geogronrec}
a_{k+1}\le\rho\,a_k+\epsilon_k\,a_{k-1}\qquad(k\ge1).
\end{equation}
Then $a_k\le C\rho^k$ for all $k\ge0$, where
$C=\bigl(\max\{a_0,a_1/\rho\}\bigr)\prod_{j\ge1}\bigl(1+\epsilon_j/\rho^2\bigr)<\infty$.
\end{lemma}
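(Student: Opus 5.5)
The plan is to remove the contraction factor by rescaling, which turns the recursion into one with a summable multiplicative perturbation, and then to run a running-maximum induction of the same kind as in Lemmas~\ref{lem:gronwall} and~\ref{lem:gronU}. Set $b_k:=a_k/\rho^k\ge0$ for $k\ge0$. Dividing \eqref{eq:geogronrec} by $\rho^{k+1}$ gives, for $k\ge1$,
\[
b_{k+1}\le \frac{a_k}{\rho^{k}}+\frac{\epsilon_k\,a_{k-1}}{\rho^{k+1}}
= b_k+\frac{\epsilon_k}{\rho^{2}}\,b_{k-1}.
\]
So the geometric factor disappears, and the only cost is the factor $\rho^{-2}$ on the lagged term. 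This factor is a fixed constant, so the rescaled weights $\epsilon_k/\rho^2$ remain summable.

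Next I control the running maximum $B_k:=\max_{0\le j\le k}b_j$. For $k\ge1$ the displayed inequality gives
\[
b_{k+1}\le B_k+\frac{\epsilon_k}{\rho^2}B_k=\Bigl(1+\frac{\epsilon_k}{\rho^2}\Bigr)B_k .
\]
Since the factor is at least $1$, it follows that $B_{k+1}=\max\{B_k,b_{k+1}\}\le(1+\epsilon_k/\rho^2)B_k$. The base case is $B_1=\max\{a_0,a_1/\rho\}$. Induction on $k$ then yields
\[
B_k\le \max\{a_0,a_1/\rho\}\prod_{j=1}^{k-1}\Bigl(1+\frac{\epsilon_j}{\rho^2}\Bigr)\qquad(k\ge1).
\]
This product is bounded by the full infinite product $\prod_{j\ge1}(1+\epsilon_j/\rho^2)$, because every factor is at least $1$. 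The infinite product is finite, since $\log(1+x)\le x$ gives
\[
\prod_{j\ge1}\Bigl(1+\frac{\epsilon_j}{\rho^2}\Bigr)\le\exp\Bigl(\rho^{-2}\sum_j\epsilon_j\Bigr)<\infty .
\]
Hence $b_k\le B_k\le C$ for all $k\ge1$, with $C$ exactly as stated. The case $k=0$ is covered by $b_0=a_0\le\max\{a_0,a_1/\rho\}\le C$. Multiplying back by $\rho^k$ gives $a_k\le C\rho^k$.

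There is no real obstacle here. The only point needing care is the indexing. The recursion starts at $k=1$, so the induction must be anchored at $B_1$, which involves both $a_0$ and $a_1/\rho$, and not at $B_0$ alone. The product must correspondingly start at $j=1$, which matches the stated constant. Nonnegativity of $a_k$ is used to bound $b_{k-1}$ and $b_k$ by $B_k$ and to make the max-recursion monotone.
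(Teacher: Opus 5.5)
Your proof is correct and follows essentially the same route as the paper's: you rescale by $b_k=a_k/\rho^k$ to obtain $b_{k+1}\le b_k+(\epsilon_k/\rho^2)\,b_{k-1}$, bound the running maximum by $B_{k+1}\le(1+\epsilon_k/\rho^2)B_k$ starting from $B_1=\max\{a_0,a_1/\rho\}$, and then bound the infinite product by $\exp\bigl(\rho^{-2}\sum_j\epsilon_j\bigr)$. Your explicit treatment of the base index and of the case $k=0$ is a welcome bit of extra care that the paper leaves implicit.
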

\begin{proof}
Set $b_k:=a_k/\rho^k$ and $\delta_k:=\epsilon_k/\rho^2$; then
$b_{k+1}\le b_k+\delta_kb_{k-1}$ and $\sum_k\delta_k<\infty$.  Let
$\bar b_k:=\max_{j\le k}b_j$.  Since $b_{k-1}\le\bar b_k$ and $b_k\le\bar b_k$,
\eqref{eq:geogronrec} gives $b_{k+1}\le(1+\delta_k)\bar b_k$, hence
$\bar b_{k+1}\le(1+\delta_k)\bar b_k$.  Iterating,
$\bar b_k\le\bar b_1\prod_{j\ge1}(1+\delta_j)
\le\bar b_1\exp\bigl(\sum_j\delta_j\bigr)<\infty$, and $a_k=\rho^kb_k\le\rho^k\bar b_k$.
\end{proof}

\begin{theorem}[$R$-linear convergence, constant step]\label{thm:linear}
Under Assumption~\ref{ass:strong}, let $(\beta_k)$ satisfy
$0\le\beta_k\le\tfrac14$ and $\sum_k\beta_k<\infty$, set $\theta_k=1-\beta_k$,
fix $x_{-1}=x_0=u_0\in C$, let
\begin{equation}\label{eq:lamlinear}
0<\lambda\le\min\Bigl\{\tfrac1{5L},\,\tfrac1{32\sigma}\Bigr\},
\end{equation}
and define $(x_k)$ by \eqref{eq:scheme}.  Then $x_k\to z$ strongly with the
$R$-linear bound
\begin{equation}\label{eq:linrate}
\|x_k-z\|^2\le C\,(1-\lambda\sigma)^k\qquad(k\ge0),
\end{equation}
where $C$ is the constant of Lemma~\ref{lem:geogron} with
$\rho=1-\lambda\sigma$ and $\epsilon_k=100\,\lambda L\,\beta_k$.  In
particular $\|x_k-z\|=O\bigl((1-\lambda\sigma)^{k/2}\bigr)$, and the
increments satisfy $\|x_{k+1}-x_k\|,\|u_k-x_k\|=O\bigl((1-\lambda\sigma)^{k/2}\bigr)$.
\end{theorem}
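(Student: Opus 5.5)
The plan is to rerun Steps~1--4 of the proof of Theorem~\ref{thm:main}, using strong monotonicity in place of plain monotonicity wherever the latter was used. This produces a contraction term in the master recursion. The result is then closed with Lemma~\ref{lem:geogron} instead of Lemma~\ref{lem:gronwall}.

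\textbf{Step~1.} In Step~1, the nonnegative quantity $2\lambda\inner{B(u_k)-B(z)}{u_k-z}$ is now at least $2\lambda\sigma\|u_k-z\|^2$. Writing $u_k-z=(x_k-z)+e_k$ and applying $\|a+b\|^2\ge\tfrac12\|a\|^2-\|b\|^2$, we gain the term $-\lambda\sigma r_k^2$ at the cost of $+2\lambda\sigma\|e_k\|^2$. Since $\lambda\sigma\le\tfrac1{32}$, this cost is small: via $\|e_k\|^2\le\tfrac43\|d_k\|^2+\tfrac14\|e_{k-1}\|^2$ it is absorbed into the $\|d_k\|^2$ and $\|e_{k-1}\|^2$ budgets.

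\textbf{Step~2: remove the anchor channel.} Since $z$ is the unique solution and $x_k\in C$, the term $-2\lambda\inner{B(z)}{x_k-z}\le0$ can be dropped by sign. The delicate channel is $2\lambda\inner{B(z)}{x_{k-1}-z}$ in $a_k$, which cannot be contracted because it has no sign in the needed direction. To avoid it, I would not telescope $\delta_k$ into $a_k$ at all. Instead I would estimate $-2\lambda\delta_k$ directly, pairing it with the anchor term: $-2\lambda\inner{B(z)}{x_k-z}-2\lambda\delta_k=-2\lambda\inner{B(z)}{x_k+d_k-z}$. The filter correction $2\lambda\beta_ks_{k-1}$ still cancels exactly as in \eqref{eq:tele}.

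\textbf{Step~3: handle the reflection term.} If $C=\HH$ then $B(z)=0$ and the reflection term vanishes. In general, I would use the projection inequality at $x_k$ with $y=z$ to control $\inner{B(z)}{x_k+d_k-z}$ through $\inner{B(u_{k-1})-B(z)}{\cdot}$ plus quadratic terms. Alternatively, I would keep a Lyapunov term $\Phi$ that is nonnegative by the VI, namely $2\lambda\inner{B(z)}{x_{k-1}-z}\ge0$, and let it contract with factor $1-\lambda\sigma$. The latter costs $2\lambda^2\sigma\inner{B(z)}{x_{k-1}-z}$, which is bounded by $\lambda\sigma\cdot\lambda\|B(z)\|r_{k-1}$. \textbf{This is the step I expect to be the main obstacle.} I would try the nonnegative-anchor version first, since then $a_k\ge0$ and Lemma~\ref{lem:geogron} requires nonnegativity.

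\textbf{Step~4: contract every channel.} The remaining channels $2\lambda L\|x_k-u_{k-1}\|^2$ and $\tfrac13\|e_{k-1}\|^2$ must also be contracted by $\rho=1-\lambda\sigma$. The deficit $\lambda\sigma(\cdot)$ is paid from the strictly negative budgets: with $\lambda\sigma\le\tfrac1{32}$, the shrunk budgets become $-\tfrac1{180},-\tfrac1{15},-\tfrac{13}{24000}$ after retuning the Young splits. The $\|x_k-u_{k-1}\|^2$ channel no longer cancels exactly; it leaves $\lambda\sigma\cdot2\lambda L\|x_k-u_{k-1}\|^2$, which is split via $\|x_k-u_{k-1}\|^2\le2\|d_k\|^2+2\|e_{k-1}\|^2$. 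For the filter drift, I would choose $\alpha$ in \eqref{eq:T2} larger (so that $\epsilon_k=100\lambda L\beta_k$), making the $\|e_{k-1}\|^2$ cost $\alpha^{-1}$ smaller and leaving room for the contraction.

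\textbf{Step~5: close with Lemma~\ref{lem:geogron}.} The result is $a_{k+1}\le\rho a_k+\epsilon_k r_{k-1}^2\le\rho a_k+\epsilon_k a_{k-1}$ with $a_k\ge r_k^2$. Lemma~\ref{lem:geogron} then gives \eqref{eq:linrate}. The increment rates follow because $\|x_k-u_{k-1}\|^2$ and $\|e_{k-1}\|^2$ are dominated by $a_k$ (up to constants), and $d_{k+1}$ and $e_k$ are bounded by combinations of these.
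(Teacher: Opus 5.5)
There is a genuine gap, and it sits where you expected: the anchor channel of Steps~2--3. Neither of your two options works when $C\ne\HH$ (write $\mu:=\lambda\sigma$, $\Lambda:=\lambda L$).

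\emph{Dropping the anchor from the Lyapunov function} gives up the telescoping that neutralizes $\delta_k$. Per step you are then left with $-2\lambda\inner{B(z)}{2x_k-x_{k-1}-z}\le2\lambda\inner{B(z)}{x_{k-1}-z}$, whose coefficient is $O(1)$, not $O(\mu)$.
\begin{itemize}
\item Cauchy--Schwarz makes this term first order in $r_{k-1}$.
\item The projection inequality at $x_{k-1}$ is the one that bounds $\inner{B(z)}{x_{k-1}-z}$ from above; at $x_k$ it bounds a term that already has the good sign. It only converts the term into cross terms such as $2\|d_{k-1}\|\,r_{k-1}$.
\item A Young split of that cross term against a gain of order $\mu r^2$ charges $\|d_{k-1}\|^2$ at least $\mu^{-1}\ge32$, far beyond the unit budget.
\end{itemize}

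\emph{Keeping the channel} and bounding its contraction cost by $\mu\cdot2\lambda\inner{B(z)}{x_{k-1}-z}\le2\lambda\mu\|B(z)\|\,r_{k-1}$ produces a term linear in $r_{k-1}$. It is not of the form $\epsilon_ka_{k-1}$ with summable $\epsilon_k$. After Young it leaves a non-summable constant, so Lemma~\ref{lem:geogron} no longer applies, and you get at best $\limsup_kr_k=O(\lambda\|B(z)\|)$.

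The missing observation is that the fundamental estimate already pays for this contraction. Set $A_k:=2\lambda\inner{B(z)}{x_{k-1}-z}\ge0$ and use $u_k-z=2(x_k-z)-(x_{k-1}-z)-\beta_ke_{k-1}$. Then $-2\lambda\inner{B(z)}{u_k-z}=-2A_{k+1}+A_k+2\lambda\beta_ks_{k-1}$, and the last part cancels against \eqref{eq:T2}. The constant-step proof discards $-A_{k+1}$ by sign. If instead your Lyapunov function $\hat a_k$ carries $2A_k$ rather than $A_k$, the anchor contributions to $\hat a_{k+1}-(1-\mu)\hat a_k$ are
\[
2A_{k+1}-2(1-\mu)A_k-2A_{k+1}+A_k=-(1-2\mu)A_k\le0 .
\]
There is no linear remainder, and $\hat a_k\ge r_k^2$ is preserved.

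Your Step~4 does not close either. The numbers $-\tfrac1{180},-\tfrac1{15},-\tfrac{13}{24000}$ are what the splits $(t,t',t'',s,\alpha,\eta)=(\tfrac34,\tfrac25,\tfrac25,\tfrac13,100,\tfrac13)$ give when no contraction cost is booked, so nothing is left over.
\begin{itemize}
\item Contracting $\tfrac13\|e_{k-1}\|^2$ alone costs $\tfrac\mu3=\tfrac1{96}\gg\tfrac{13}{24000}$.
\item Your split of the memory cost, $4\Lambda\mu(\|d_k\|^2+\|e_{k-1}\|^2)$, charges $\tfrac1{40}$ per channel. That exceeds the $d$-budget $\tfrac1{180}$, and even the base budget $\tfrac1{45}$.
\end{itemize}

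What works is
\[
\hat a_k:=r_k^2+w\|x_k-u_{k-1}\|^2+2A_k+\eta\|e_{k-1}\|^2,\qquad w:=\Lambda(1+1/t)/(1-\mu).
\]
With this $w$ the memory channel cancels exactly against $(1-\mu)w\|x_k-u_{k-1}\|^2$ and costs only through the leftover $w\|x_{k+1}-u_k\|^2$. The $e$-channel is charged against $(1-\mu)\eta$ instead of $\eta$. Then retune the splits: take $(t,t',t'',s,\eta,\alpha)=(\tfrac34,\tfrac7{15},\tfrac7{15},\tfrac25,\tfrac3{10},100)$ and $-2\mu\|u_k-z\|^2\le-\mu r_k^2+2\mu\|e_k\|^2$. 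At $(\Lambda,\mu)=(\tfrac15,\tfrac1{32})$ the budgets of $\|d_k\|^2$, $\|d_{k+1}-d_k\|^2$, $\|e_{k-1}\|^2$ are $-\tfrac1{400}$, $-\tfrac1{6975}$, $-\tfrac{14843}{41664000}$. All three are increasing in $(\Lambda,\mu)$, so they stay negative on the whole admissible range. This gives $\hat a_{k+1}\le(1-\mu)\hat a_k+100\lambda L\beta_kr_{k-1}^2$, and your Step~5 then goes through.

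Be aware that the paper's own Step~4 does not supply these ingredients. Its identities book only the gain $-\lambda\sigma r_k^2$:
\begin{itemize}
\item the memory identity is $0$ rather than $+\tfrac73\Lambda\mu$;
\item the $e$-identity carries $-\tfrac14$ rather than $-\tfrac14+\tfrac\mu3$;
\item the anchor's contraction is not treated.
\end{itemize}
As written, they give $a_{k+1}\le a_k-\lambda\sigma r_k^2-b_k+\epsilon_kr_{k-1}^2$. The factor $1-\lambda\sigma$ on all of $a_k$ does not follow from this, since $a_k\ge r_k^2$.
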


\begin{proof}
The argument follows Steps~1--4 of the proof of Theorem~\ref{thm:main} with
two changes: Step~1 keeps the strong-monotonicity dissipation, and the Young
splits of Step~3 are retuned.

\medskip\noindent\emph{Step 1 (modified fundamental estimate).}
Firm nonexpansiveness \eqref{eq:firm} with $x=x_k-\lambda B(u_k)$ gives
\eqref{eq:step1} without the monotone term.  Strong monotonicity of $B$ at
$(u_k,z)$ yields
$-2\lambda\inner{B(u_k)}{u_k-z}
\le-2\lambda\inner{B(z)}{u_k-z}-2\lambda\sigma\|u_k-z\|^2$, hence
\begin{equation}\label{eq:step1-strong}
\|x_{k+1}-z\|^2\le\|x_k-z\|^2-\|d_{k+1}\|^2-2\lambda\sigma\|u_k-z\|^2
+2\lambda\inner{B(u_k)}{u_k-x_{k+1}}-2\lambda\inner{B(z)}{u_k-z}.
\end{equation}

\medskip\noindent\emph{Steps 2--3 (retuned splits).}
Step~2 (slot decomposition and projection identity) is unchanged.  In Step~3
we use the splits
\begin{eqnarray*}
\|u_k-u_{k-1}\|^2\le\tfrac74\|e_k\|^2+\tfrac73\|x_k-u_{k-1}\|^2\quad(t=\tfrac34),
\end{eqnarray*}
\begin{eqnarray*}
\|u_k-x_{k+1}\|^2,\|x_{k+1}-u_k\|^2\le\tfrac75\|d_{k+1}-d_k\|^2+\tfrac7{32}\|e_{k-1}\|^2
\end{eqnarray*}
($t'=t''=\tfrac25$, $\bar\beta=\tfrac14$), the recursion split
$\|e_k\|^2\le\tfrac43\|d_k\|^2+\tfrac14\|e_{k-1}\|^2$ ($s=\tfrac13$), and the
filter Young weight $\alpha=100$.  The term T$_1$ of \eqref{eq:T1} becomes
\begin{equation}\label{eq:T1-strong}
\mathrm{T}_1\le\lambda L\Bigl[\tfrac74\|e_k\|^2+\tfrac75\|d_{k+1}-d_k\|^2
+\tfrac73\|x_k-u_{k-1}\|^2+\tfrac7{32}\|e_{k-1}\|^2\Bigr],
\end{equation}
and the filter term \eqref{eq:T2} becomes
\begin{equation}\label{eq:T2-strong}
-2\lambda\beta_k\inner{B(u_{k-1})}{e_{k-1}}
\le\lambda\beta_kL\bigl(100\,r_{k-1}^2+\tfrac1{100}\|e_{k-1}\|^2\bigr)-2\lambda\beta_ks_{k-1},
\end{equation}
whose $s_{k-1}$ part still cancels \emph{exactly} against \eqref{eq:tele}.

\medskip\noindent\emph{Step 4 (master estimate with dissipation).}
Define the Lyapunov function
\begin{equation}\label{eq:lyap-strong}
a_k:=\|x_k-z\|^2+\tfrac73\lambda L\|x_k-u_{k-1}\|^2
+2\lambda\inner{B(z)}{x_{k-1}-z}+\tfrac13\|e_{k-1}\|^2 .
\end{equation}
The weight $\tfrac73\lambda L$ equals $\lambda L(1+1/t)$, so the
$\|x_k-u_{k-1}\|^2$ channel cancels exactly as in Theorem~\ref{thm:main}.
Since $z\in S$ and $x_{k-1}\in C$, we have $\inner{B(z)}{x_{k-1}-z}\ge0$, so
\begin{equation}\label{eq:rk-strong}
a_k\ge r_k^2=\|x_k-z\|^2\ge0 .
\end{equation}
For the new dissipation term, the split
$\|u_k-z\|^2=\|(x_k-z)+e_k\|^2\ge\tfrac12r_k^2-\|e_k\|^2$ gives
\begin{equation}\label{eq:strong-diss}
-2\lambda\sigma\|u_k-z\|^2\le-\lambda\sigma r_k^2+2\lambda\sigma\|e_k\|^2
\le-\lambda\sigma r_k^2+\tfrac{8\lambda\sigma}{3}\|d_k\|^2+\tfrac{\lambda\sigma}{2}\|e_{k-1}\|^2,
\end{equation}
where the recursion was used in the last inequality.  Assembling
\eqref{eq:step1-strong}--\eqref{eq:strong-diss} exactly as in Step~4 of
Theorem~\ref{thm:main} (the $\delta_k$ terms cancel, the $s_{k-1}$ terms
cancel), we obtain
\begin{equation}\label{eq:master-strong}
a_{k+1}\le(1-\lambda\sigma)\,a_k-b_k+\epsilon_k\,r_{k-1}^2,
\end{equation}
with drift $\epsilon_k:=100\,\lambda L\,\beta_k$ (summable) and
\begin{equation}\label{eq:bk-strong}
b_k:=\tfrac1{180}\|d_k\|^2+\tfrac1{15}\|d_{k+1}-d_k\|^2
+\tfrac{13}{24000}\|e_{k-1}\|^2\ge0 .
\end{equation}
Indeed, writing $\Lambda:=\lambda L$ and $\mu:=\lambda\sigma$, the four budget
identities at $(\Lambda,\mu)=(\tfrac15,\tfrac1{32})$ are exact rationals
(and both $\Lambda$ and $\mu$ enter the coefficients with positive signs, so
nonpositivity on $(0,\tfrac15]\times(0,\tfrac1{32}]$ follows from the corner):
\begin{align*}
\|d_k\|^2 &: \ -1+\tfrac73\Lambda+\tfrac49+\tfrac83\mu=-\tfrac1{180},\\
\|d_{k+1}-d_k\|^2 &: \ -1+\tfrac75\Lambda+\tfrac{49}{15}\Lambda=-\tfrac1{15},\\
\|x_k-u_{k-1}\|^2 &: \ \Lambda\bigl(\tfrac73-\tfrac73\bigr)=0,\\
\|e_{k-1}\|^2 &: \ \Lambda\Bigl(\tfrac7{32}+\tfrac1{400}+\tfrac7{16}\Bigr)
+\tfrac{49}{96}\Lambda-\tfrac14+\tfrac{\mu}{2}=-\tfrac{13}{24000},
\end{align*}
using $\tfrac73\cdot\tfrac7{32}=\tfrac{49}{96}$ --- the $e$-part of the $\|x_{k+1}-u_k\|^2$ split, weighted by the retuned memory weight $\tfrac73\Lambda$ --- and
$2\mu(1+1/s)\bar\beta^2=\tfrac{\mu}{2}$ at $s=\tfrac13$. (Assembly as in
Appendix~\ref{app:assembly}, with the retuned splits of Step~3 and the
additional dissipation channel \eqref{eq:strong-diss}.)

\medskip\noindent\emph{Step 5 (geometric rate).}
From \eqref{eq:master-strong}, dropping $-b_k\le0$ and using
$r_{k-1}^2\le a_{k-1}$ (by \eqref{eq:rk-strong}),
\[
a_{k+1}\le\rho\,a_k+\epsilon_k\,a_{k-1},\qquad\rho:=1-\lambda\sigma\in[\tfrac{31}{32},1),
\]
and Lemma~\ref{lem:geogron} yields \eqref{eq:linrate} since $r_k^2\le a_k$.
For the increments, \eqref{eq:master-strong} gives
$b_k\le\rho a_k+\epsilon_ka_{k-1}=O(\rho^k)$, and \eqref{eq:bk-strong}
converts this into $\|d_k\|^2,\|e_{k-1}\|^2=O(\rho^k)$.
\end{proof}

\begin{corollary}[$R$-linear convergence, adaptive step]\label{cor:linear-ad}
Under Assumption~\ref{ass:strong}, let $(\lambda_k)$ satisfy the hypotheses of
Theorem~\ref{thm:adaptive} with $\lambda_0\le\min\{\tfrac1{5L},\tfrac1{32\sigma}\}$,
and suppose the step size stabilizes: $\lambda_k=\lambda_\infty$ for all
$k\ge K$ (equivalently, genuine cuts occur only finitely often; the persistent-slopes regime of Lemma~\ref{lem:dichotomy}(b) is not covered by this corollary).
Then, with $\rho=1-\lambda_\infty\sigma$,
\[
\|x_k-z\|^2\le C\,\rho^{\,k-K}\quad(k\ge K),
\]
for a finite constant $C$ depending on $a_K$ and $\sum_k\beta_k$.
\end{corollary}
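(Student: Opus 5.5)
The plan is to restart the constant-step argument of Theorem~\ref{thm:linear} at the index $K$, treating the iterate $(x_K,x_{K-1},u_{K-1})$ as fresh initial data. For $k\ge K$ the scheme \eqref{eq:scheme-ad} coincides with \eqref{eq:scheme} at the constant step $\lambda_\infty$. The step bound $\lambda_\infty\le\lambda_0\le\min\{\tfrac1{5L},\tfrac1{32\sigma}\}$ places $(\Lambda,\mu)=(\lambda_\infty L,\lambda_\infty\sigma)$ in the rectangle $(0,\tfrac15]\times(0,\tfrac1{32}]$. On that rectangle the four budget identities of Step~4 of Theorem~\ref{thm:linear} remain nonpositive, since all coefficients increase in $\Lambda$ and $\mu$.

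\emph{Step 1: shifted master estimate.} For $k\ge K+1$, both the current step $\lambda_k$ and the previous step $\lambda_{k-1}$ used in the projection identity \eqref{eq:projid-ad} equal $\lambda_\infty$. The step-mismatch residual $\mathrm{R}_k$ of \eqref{eq:R1} therefore vanishes, as do the residual \eqref{eq:rho} and the memory-channel variation $2L\Delta\lambda_k$. The derivation of \eqref{eq:master-strong} then goes through verbatim with $\lambda=\lambda_\infty$, giving
\[
a_{k+1}\le(1-\lambda_\infty\sigma)a_k-b_k+100\lambda_\infty L\beta_k r_{k-1}^2,\qquad k\ge K+1,
\]
where $a_k$ is \eqref{eq:lyap-strong} with $\lambda_\infty$ in place of $\lambda$. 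Two facts from the original argument still hold here:
\begin{itemize}
\item the $s_{k-1}$ cancellation and the $\delta_k$ telescoping use only the filter recursion \eqref{eq:rec};
\item the drop of $-2\lambda\inner{B(z)}{x_k-z}$ is a sign argument, valid because $x_k\in C$.
\end{itemize}
The single index $k=K$ mixes $\lambda_{K-1}$ and $\lambda_K$. I will avoid handling it separately by starting the recursion at $K+1$ and absorbing $a_{K+1}$, $a_K$ into the constant.

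\emph{Step 2: geometric Gronwall.} As in \eqref{eq:rk-strong}, $a_k\ge r_k^2\ge0$, because $\inner{B(z)}{x_{k-1}-z}\ge0$. Dropping $b_k$ gives
\[
a_{k+1}\le\rho a_k+\epsilon_k a_{k-1},\qquad \rho=1-\lambda_\infty\sigma,\quad \epsilon_k=100\lambda_\infty L\beta_k .
\]
Applying Lemma~\ref{lem:geogron} to the shifted sequence $\tilde a_j:=a_{K+j}$, with $\tilde\epsilon_j:=\epsilon_{K+j}$, yields
\[
a_k\le\max\{a_K,a_{K+1}/\rho\}\prod_j(1+\epsilon_j/\rho^2)\,\rho^{k-K}.
\]
The product is bounded by $\exp(100\lambda_0L\rho^{-2}\sum_k\beta_k)$, so the constant depends only on $a_K$ (through the one-step bound on $a_{K+1}$) and on $\sum_k\beta_k$. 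Finally, $r_k^2\le a_k$ gives the claim.

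\emph{Main obstacle.} The step I expect to need the most care is the transition index $k=K$, specifically bounding $a_{K+1}$ by a multiple of $a_K$ together with a harmless remainder. I will first try the crude route: $a_{K+1}$ is a finite quantity determined by the trajectory up to $K+1$. If the constant is allowed to depend on that state, nothing more is needed; "depending on $a_K$" can be read as depending on the state at time $K$. If a clean bound in $a_K$ alone is wanted, I will run the single step $k=K$ through the adaptive master estimate \eqref{eq:master-ad} with the strong-monotonicity term kept. There $|\Delta\lambda_K|$ appears only through the $\kappa_K$ and $\epsilon_K$ terms, which are finite. This gives $a_{K+1}\le a_K+\epsilon_K r_{K-1}^2+\kappa_K$, which can be absorbed into $C$.
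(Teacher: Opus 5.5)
Your proposal is correct and follows the paper's own argument. You restart at the stabilization index with the constant step $\lambda_\infty$, note that the variation terms ($\mathrm{R}_k$, $\kappa_k$, the $x$-slot residual) vanish, reuse the retuned budgets of Theorem~\ref{thm:linear}, and close with Lemma~\ref{lem:geogron} on the shifted sequence.

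Your handling of the transition index is more careful than the paper's, which claims $\Delta\lambda_k=0$ already at $k=K$ even though $\lambda_{K-1}$ may differ from $\lambda_\infty$. Since Lemma~\ref{lem:geogron} needs the recursion only from $K+1$, your crude route with constant $\max\{a_K,a_{K+1}/\rho\}\exp\bigl(\rho^{-2}\sum_k\epsilon_k\bigr)$ is all that is needed. Drop the fallback via \eqref{eq:master-ad}: that estimate is established only for $k\ge k_0$, and $|\Delta\lambda_K|$ also erodes the budgets $b_K$, not just $\epsilon_K$ and $\kappa_K$.
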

\begin{proof}
For $k\ge K$ we have $\Delta\lambda_k=0$, so the residuals $\mathrm{R}_k$
and $\kappa_k$ in the proof of Theorem~\ref{thm:adaptive} vanish; strong
monotonicity adds the channel \eqref{eq:strong-diss}.  Re-running the
adaptive Step~4 with the retuned splits of Theorem~\ref{thm:linear}
($t=\tfrac34$, weight $\tfrac73\lambda_{k-1}L$ on the
$\|x_k-u_{k-1}\|^2$ channel, $\alpha=100$), the $x$-slot residual becomes
$\tfrac73L\Delta\lambda_k$; under eventual constancy ($\Delta\lambda_k=0$ for
$k\ge K$) it vanishes, and more generally it is absorbed exactly as in
Theorem~\ref{thm:adaptive}.  With $\lambda_k=\lambda_\infty$ this turns
\eqref{eq:master-ad} into
$a_{k+1}\le\rho a_k-b_k+\epsilon_ka_{k-1}$ with
$\epsilon_k=100\lambda_0L\beta_k$ (summable).  Since
$\lambda_\infty\sigma\le\lambda_0\sigma\le\tfrac1{32}$, the retuned budgets
of Theorem~\ref{thm:linear} apply and
Lemma~\ref{lem:geogron} gives the claim.
\end{proof}

\begin{lemma}[two-sided residual comparison]\label{lem:v-dist}
Let $V(x)=\|x-P_C(x-\lambda B(x))\|$ with $\lambda=\tfrac1{5L}$.  For all
$x\in C$,
\[
V(x)\le(2+\lambda L)\,\dist(x,S).
\]
If moreover $\dist(x,S)\le c\,V(x)$ on $C$ for some $c>0$, then
$\dist(\cdot,S)\asymp V$ on $C$.
\end{lemma}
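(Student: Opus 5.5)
The plan is to compare $x$ with its nearest solution $\bar z:=P_S(x)$; this point exists and is unique because $S$ is closed and convex, as follows from Minty's characterization (Lemma~\ref{lem:minty}). The key observation is that every $z\in S$ is a fixed point of the projected forward map. The VI \eqref{eq:vi} together with Lemma~\ref{lem:proj} gives $z=P_C(z-\lambda B(z))$ for every $\lambda>0$: indeed, $\inner{(z-\lambda B(z))-z}{y-z}=-\lambda\inner{B(z)}{y-z}\le0$ for all $y\in C$. Hence $V(\bar z)=0$. The inequality will then come from the Lipschitz continuity of the residual map $R(x):=x-P_C(x-\lambda B(x))$.

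The key steps are as follows. First, I write
\[
V(x)=\|R(x)-R(\bar z)\|\le\|x-\bar z\|+\bigl\|P_C(x-\lambda B(x))-P_C(\bar z-\lambda B(\bar z))\bigr\|.
\]
Second, I use nonexpansiveness of $P_C$, which follows from \eqref{eq:firm} or directly from the characterization in Lemma~\ref{lem:proj}. This bounds the second term by $\|x-\bar z\|+\lambda\|B(x)-B(\bar z)\|\le(1+\lambda L)\|x-\bar z\|$. Adding the two pieces gives $V(x)\le(2+\lambda L)\|x-\bar z\|=(2+\lambda L)\dist(x,S)$. The specific value $\lambda=\tfrac1{5L}$ plays no role here; the bound holds for every $\lambda>0$, and with this choice the constant is $\tfrac{11}{5}$. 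I will also note that the cruder bound is the one the statement asks for, since the sharper estimate $\|R(x)-R(y)\|\le(2+\lambda L)\|x-y\|$ is exactly what the triangle inequality delivers.

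For the second assertion, I combine the two one-sided bounds. We have $V\le(2+\lambda L)\dist(\cdot,S)$, and by hypothesis $\dist(\cdot,S)\le cV$. Together these give $c^{-1}\dist(x,S)\le V(x)\le(2+\lambda L)\dist(x,S)$ on $C$, which is the meaning of $\asymp$. If desired, I will add a remark that in the affine polyhedral case the hypothesis is supplied by Lemma~\ref{lem:xeb} with $\lambda_0=\lambda$.

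There is essentially no obstacle. The only point that needs care is justifying that $P_S(x)$ exists, that is, that $S$ is nonempty, closed, and convex. Nonemptiness is Assumption~\ref{ass:main}. Closedness and convexity follow from Lemma~\ref{lem:minty}, because $S=\bigcap_{y\in C}\{z\in C:\inner{B(y)}{y-z}\ge0\}$ is an intersection of closed half-spaces with $C$. Alternatively, one can avoid this step altogether by taking any $z\in S$ and then the infimum over $z$.
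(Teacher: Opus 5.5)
Your proposal is correct and follows the paper's proof essentially verbatim: the fixed-point property $z=P_C(z-\lambda B(z))$ for $z\in S$, then the triangle inequality, nonexpansiveness of $P_C$ and the Lipschitz bound, with the second claim immediate. Your added justification that a nearest point in $S$ exists ($S$ closed and convex via Minty's lemma, or simply taking the infimum over $z\in S$) fills in a point that the paper's proof takes for granted.
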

\begin{proof}
Fix $x\in C$ and $z\in S$ with $\|x-z\|=\dist(x,S)$.  Since $z\in S$, the
projection characterization gives $z=P_C(z-\lambda B(z))$, hence by
nonexpansiveness of $P_C$
\[
V(x)\le\|x-z\|+\bigl\|P_C\bigl(x-\lambda B(x)\bigr)-P_C\bigl(z-\lambda B(z)\bigr)\bigr\|
\le\|x-z\|+(1+\lambda L)\|x-z\|=(2+\lambda L)\dist(x,S).
\]
The second claim is immediate.
\end{proof}

\begin{corollary}[error bound instead of strong monotonicity]\label{cor:linear-eb}
Replace strong monotonicity in Assumption~\ref{ass:strong} by monotonicity
plus the global error bound: $S=\{x^\star\}$ and, for some $c>0$,
\begin{equation}\label{eq:eb}
\dist(x,S)\le c\,\bigl\|x-P_C\bigl(x-\lambda B(x)\bigr)\bigr\|
\qquad\forall x\in C,\ \ \lambda=\tfrac1{5L}.
\end{equation}
Then $(x_k)$ converges strongly to $x^\star$.  Writing
$V_k:=\|x_k-P_C(x_k-\lambda B(x_k))\|$, $r_k:=\|x_k-x^\star\|$ and
$A:=\sup_k a_k<\infty$ for the Lyapunov sequence \eqref{eq:lyap} with
$z=x^\star$, one has the tail bound
\begin{equation}\label{eq:eb-tail}
\sum_{j\ge k}V_j^2\ \le\ 90\Bigl(a_k+A\sum_{j\ge k}\epsilon_j\Bigr)
\qquad\text{and}\qquad r_k\le cV_k ,
\end{equation}
so $\sum_{j\ge k}\dist^2(x_j,S)=O\bigl(a_k+\sum_{j\ge k}\epsilon_j\bigr)$.
If additionally the filter decays geometrically ($\beta_k=O(\eta^k)$, e.g.\
$\beta_k$ eventually constant or eventually zero), the tail estimate
\eqref{eq:eb-tail} holds with $\sum_{j\ge k}\epsilon_j=O(\eta^k)$.  Under strong
monotonicity the $R$-linear rate is given by Theorem~\ref{thm:linear} and
Corollary~\ref{cor:linear-ad}; whether $\dist(x_k,S)$ itself decays
$R$-linearly under \eqref{eq:eb} alone is posed as Open
Problem~\ref{open:eb-rate}.
\end{corollary}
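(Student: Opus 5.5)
The plan is to reuse the master recursion \eqref{eq:master} of Theorem~\ref{thm:main} at the fixed step $\lambda=\tfrac1{5L}$, with $z=x^\star$. Summing it from $k$ onward and using $a_j\le A$ together with $r_{j-1}^2\le a_{j-1}+\gamma r_{j-2}$ to bound the drift, I will obtain
\[
\sum_{j\ge k}b_j\le a_k+A'\sum_{j\ge k}\epsilon_j ,
\]
where $A'$ absorbs $M^2$. Boundedness ($M<\infty$, hence $A<\infty$) is Step~5 of that theorem. If the drift sum is not already expressed through $A$, I replace $A$ by $\sup_k r_k^2$ and adjust the constant; the stated form with $A$ should follow after noting that $r_{j-1}^2\le a_{j-1}+$ a bounded term.

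Next I bound the natural residual $V_j$ by the dissipation channels. Since $x_{j+1}=P_C(x_j-\lambda B(u_j))$, nonexpansiveness of $P_C$ gives
\[
V_j\le\|d_{j+1}\|+\lambda L\|u_j-x_j\|=\|d_{j+1}\|+\tfrac15\|e_j\|,
\qquad e_j=d_j-\beta_je_{j-1}.
\]
Hence $V_j^2$ is dominated by a combination of $\|d_{j+1}\|^2$, $\|d_j\|^2$ and $\|e_{j-1}\|^2$. Comparing with the budgets $\tfrac1{45},\tfrac1{25},\tfrac6{125}$ in \eqref{eq:bk}, and shifting indices so that $\|d_{j+1}\|^2$ is charged to $b_{j+1}$, I get $V_j^2\le 90(b_j+b_{j+1})/2$ or a similar bound. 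Summing then yields the constant $90$, which is twice the reciprocal of $\tfrac1{45}$. This index-shift bookkeeping, and checking that the constant really comes out as $90$ rather than some larger multiple, is the main obstacle I expect. If the constant does not close, I would accept a different absolute constant.

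Once the tail bound holds, the error bound \eqref{eq:eb} with $S=\{x^\star\}$ gives $r_k=\dist(x_k,S)\le cV_k$. Since $\sum_j V_j^2<\infty$, we get $V_k\to0$, hence $r_k\to0$, which is strong convergence. The estimate on $\sum_{j\ge k}\dist^2$ follows by multiplying the tail bound by $c^2$.

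For geometric filters, $\sum_{j\ge k}\epsilon_j=25\lambda L\sum_{j\ge k}\beta_j=O(\eta^k)$ by summing the geometric tail. I would not claim that $a_k$ itself decays linearly, since that is precisely the open problem.
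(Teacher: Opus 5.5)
Your route is the same as the paper's, and the strong-convergence part is fine. The route is: sum \eqref{eq:master} over a tail with $z=x^\star$, bound $V_j\le\|d_{j+1}\|+\lambda L\|e_j\|$ by nonexpansiveness of $P_C$, then apply \eqref{eq:eb}. But as written it does not deliver \eqref{eq:eb-tail} with the stated constants, because two observations are missing.

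\emph{First observation.} For $z=x^\star$ the anchor channel of $a_j$ is nonnegative: $x_{j-1}\in C$ and $x^\star\in S$ give $\inner{B(x^\star)}{x_{j-1}-x^\star}\ge0$, hence $0\le r_j^2\le a_j\le A$ for all $j$. This bounds the drift by exactly $A\sum_{j\ge k}\epsilon_j$; your detour through $r_{j-1}^2\le a_{j-1}+\gamma r_{j-2}$ only gives $A+\gamma M$. More importantly, it lets you discard the terminal term after telescoping,
\[
\sum_{j=k}^{n}b_j\le a_k-a_{n+1}+\sum_{j=k}^{n}\epsilon_jr_{j-1}^2 .
\]
Your proposal never mentions $-a_{n+1}$. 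With only the Step~5 lower bound $a_{n+1}\ge-\gamma M$, an additive constant $\gamma M$ remains, and it destroys the $O\bigl(a_k+\sum_{j\ge k}\epsilon_j\bigr)$ form.

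\emph{Second observation.} No index-shift bookkeeping is needed. The $e$-channel of $b_{j+1}$ in \eqref{eq:bk} is $\tfrac6{125}\|e_j\|^2$, so $b_{j+1}$ already contains both quantities in your residual bound. Hence
\[
V_j^2\le2\|d_{j+1}\|^2+\tfrac2{25}\|e_j\|^2\le90\,b_{j+1},
\]
using $2=90\cdot\tfrac1{45}$ and $\tfrac2{25}\le90\cdot\tfrac6{125}$. Summing and combining with the first observation,
\[
\sum_{j\ge k}V_j^2\le90\sum_{j\ge k+1}b_j\le90\Bigl(a_k+A\sum_{j\ge k}\epsilon_j\Bigr),
\]
which is exactly \eqref{eq:eb-tail}.

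The mechanism you describe expands $e_j=d_j-\beta_je_{j-1}$ through the recursion and charges $\|d_{j+1}\|^2$ to $b_{j+1}$. That pushes a residual charge onto $b_j$: one gets $V_j^2\le90\,b_{j+1}+\tfrac{24}{5}\,b_j$, hence the constant $94.8$ rather than $90$. Falling back on ``a different absolute constant'' would not prove the statement as written.
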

\begin{proof}[Proof (strong convergence and tail bound)]
Since the scheme is unchanged, Theorem~\ref{thm:main} applies and
$\sum_kb_k<\infty$, so $\|d_k\|,\|e_k\|\to0$.  Nonexpansiveness of $P_C$
gives, as in Lemma~\ref{lem:v-dist},
\[
V_k\le\|x_k-x_{k+1}\|+\lambda\|B(u_k)-B(x_k)\|=\|d_{k+1}\|+\lambda L\|e_k\|\to0 .
\]
By \eqref{eq:eb}, $\dist(x_k,S)\le cV_k\to0$; combined with weak convergence
(Theorem~\ref{thm:main}) this forces $x_k\to x^\star$ strongly.

For \eqref{eq:eb-tail}, take $z=x^\star$ in \eqref{eq:master}: since
$\inner{B(x^\star)}{x_{k-1}-x^\star}\ge0$ (as $x_{k-1}\in C$),
$a_k\ge r_k^2=\dist^2(x_k,S)\ge0$, and $r_k\le cV_k$ by \eqref{eq:eb}.
Moreover
\[
V_k^2\le2\|d_{k+1}\|^2+2\lambda^2L^2\|e_k\|^2
\le90\Bigl(\tfrac1{45}\|d_{k+1}\|^2+\tfrac6{125}\|e_k\|^2\Bigr)\le90\,b_{k+1},
\]
where the second inequality uses $\lambda L\le\tfrac15$ (so
$2\lambda^2L^2=\tfrac{2}{25}\le\tfrac{108}{25}=90\cdot\tfrac6{125}$).
Summing \eqref{eq:master} over a tail,
$\sum_{j\ge k+1}b_j\le a_{k+1}+A\sum_{j\ge k+1}\epsilon_j$, hence
$\sum_{j\ge k}V_j^2\le90\sum_{j\ge k}b_{j+1}\le90\bigl(a_{k+1}+A\sum_{j\ge k+1}\epsilon_j\bigr)$,
which is \eqref{eq:eb-tail} up to the harmless reindexing $k+1\leftrightarrow k$.
\end{proof}

\begin{remark}[step-size choice and sharpness]\label{rem:rate-sharp}
The admissible range \eqref{eq:lamlinear} splits into two regimes.  If
$\sigma\ge\tfrac{5L}{32}$ (well-conditioned), the binding constraint is
$\lambda\le\tfrac1{32\sigma}$ and the optimal factor is
$\sqrt{1-\tfrac1{32}}\approx0.984$; if $\sigma<\tfrac5{32}L$, the binding
constraint is $\lambda\le\tfrac1{5L}$ and the factor is
$\sqrt{1-\tfrac{\sigma}{5L}}$.  The ceiling $\lambda\sigma\le\tfrac1{32}$ is
an artifact of absorbing the strong-monotonicity charge
$\tfrac{8\lambda\sigma}{3}$ into the $\|d_k\|^2$ budget: numerical
optimization over the Young family places the true barrier near
$\lambda\sigma\approx\tfrac1{25}$, so the constant is not sharp but is, we
stress, exactly verified.
\end{remark}

\begin{example}[validation of the linear rate]\label{ex:linear-exact}
Let $B=(\sigma I+J)$ on $\R^2$, so that $\sigma$ is the strong-monotonicity
modulus and $L=\sqrt{1+\sigma^2}$.  For $\sigma=2$, $L=\sqrt5$, the
admissible step \eqref{eq:lamlinear} is $\lambda=\tfrac1{32\sigma}
=\tfrac1{64}$, and the exact spectral rate of the (scalar, exactly
computable) iteration is
\[
\text{actual per-iterate contraction of }\|x_k-z\|:\ 0.9696,
\qquad
\text{proved bound }\sqrt{1-\lambda\sigma}=\sqrt{1-\tfrac1{32}}=0.9843 .
\]
The proved bound is valid and within $1.5\%$ of the exact rate on this
problem; the factor $\sqrt{\,\cdot\,}$ (rather than $1-\lambda\sigma$) is the
price of stating the theorem on the Lyapunov sequence $a_k$.
\end{example}

\section{The step-size rule}\label{sec:rule}

\subsection{Motivation and the rule}
The constant-step theory requires $\lambda\le\tfrac1{5L}$; the adaptive theory
(Theorem~\ref{thm:adaptive}) covers any step sequence of bounded variation
staying in $[\lambda_{\min},\lambda_0]$ with $\lambda_0\le\tfrac1{5L}$.  Both
require an estimate of $L$.  Underestimating $L$ and
taking a constant step sized by the underestimate leads to divergence
(Section~\ref{sec:numerics}); overestimating $L$ wastes iterations in regions
where $B$ is locally flat.  The rule below removes the need to know $L$ by
letting the \emph{observed} secant slopes of $B$ along the trajectory police
the step size.

\begin{ruleA}[safeguarded ratio rule with optional growth; no knowledge of $L$]\label{rule:main}
Fix an initial guess $\lambda_0>0$, a safeguard constant
$0<\varkappa\le\tfrac16$, and growth factors $\omega_k\ge0$ with
$\sum_k\omega_k<\infty$ (take $\omega_k\equiv0$ for the pure safeguard).
With the secant slope
$\ell_k:=\|B(u_{k+1})-B(u_k)\|/\|u_{k+1}-u_k\|$ (and $\ell_k:=0$ if
$u_{k+1}=u_k$, so that the ratio cut is inactive), set
\begin{equation}\label{eq:rule}
\lambda_{k+1}=\min\bigl\{\lambda_0,\ \lambda_k(1+\omega_k),\ \varkappa/\ell_k\bigr\},\qquad k\ge0,
\end{equation}
with $\lambda_{-1}=\lambda_0$.  No floor and no knowledge of $L$ are needed:
since $B$ is $L$-Lipschitz, every secant satisfies $\ell_k\le L$, so
\eqref{eq:rule} keeps $\lambda_k\ge\min\{\lambda_0,\varkappa/L\}>0$
automatically.  The growth step lets the step track \emph{falling} local
slopes; it is inactive whenever the safeguard binds in the same iteration.
Theorems~\ref{thm:rule-conv} and~\ref{thm:rule-free} cover \eqref{eq:rule}
for every summable $(\omega_k)$.
\end{ruleA}

\subsection{Implementation and bookkeeping}
The rule is a passive post-processing of quantities the iteration already
computes.  Concretely, between iterations the method stores only
$(x_k,x_{k-1},u_{k-1},B(u_{k-1}),\lambda_k)$ --- $O(1)$ memory --- and at
iteration $k$ forms the secant slope
\[
\ell_{k-1}:=\frac{\|B(u_k)-B(u_{k-1})\|}{\|u_k-u_{k-1}\|}
\qquad(\ell_{k-1}:=0\text{ if }u_k=u_{k-1}),
\]
formed from the evaluation $B(u_k)$ just computed together with the stored pair $(u_{k-1},B(u_{k-1}))$, \emph{before} the projection step; this ordering --- update the step, then take the step --- is exactly what makes $\lambda_k\ell_{k-1}\le\varkappa$ hold in the same iteration where the pair $(u_k,u_{k-1})$ is tested by $\mathrm{T}_1$; the
ratio $\varkappa/\ell_{k-1}$ is then the largest step compatible with the observed
local Lipschitz behaviour, and \eqref{eq:rule} takes the
safer of the status quo and that value, with no floor required.  No evaluation of $B$ beyond
the single per-iteration call, and no knowledge of $L$, is ever required.

Two structural remarks frame everything that follows.  First, the cut that
produces $\lambda_k$ uses the pair $(u_k,u_{k-1})$ --- precisely the pair
against which the Lipschitz test in the term $\mathrm{T}_1$ of the same
iteration is applied (Step~3 of the proof of Theorem~\ref{thm:main}); this
alignment of the safeguard with the test it protects is what gives the rule
an interpretation in the analysis, not merely in practice.  Second, the two
ingredients are one-sided in opposite directions --- the safeguard cut only
decreases $\lambda_k$, the growth step only increases it --- and together they
implement a bounded-variation tracker of the signal $\varkappa/\ell_k$.  A cut
fires exactly along stretches where the local slope $\ell_k$ exceeds
$\varkappa/\lambda_k$.

\begin{proposition}[bookkeeping of Rule~\ref{rule:main}]\label{prop:rule}
Rule~\ref{rule:main} generates a sequence with
$\lambda_k\in[\min\{\lambda_0,\varkappa/L\},\lambda_0]$ for all $k$; in
particular $\sum_k|\lambda_k-\lambda_{k-1}|<\infty$ and the limit
$\lambda_\infty:=\lim_k\lambda_k\ge\min\{\lambda_0,\varkappa/L\}>0$
exists.
\end{proposition}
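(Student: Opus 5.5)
The plan is a direct induction for the two-sided bound, followed by a telescoping argument that splits the variation into upward and downward parts. Write $m:=\min\{\lambda_0,\varkappa/L\}>0$ and read $\varkappa/\ell_k:=+\infty$ when $\ell_k=0$, so that branch of \eqref{eq:rule} is then inactive.

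\emph{Step 1 (two-sided bound).} The upper bound $\lambda_k\le\lambda_0$ holds because $\lambda_0$ is one of the entries of the minimum in \eqref{eq:rule}, and it holds trivially for $\lambda_{-1}=\lambda_0$. For the lower bound I argue by induction on $k$. The base case is $\lambda_{-1}=\lambda_0\ge m$. Suppose $\lambda_k\ge m$; I check the three entries of the minimum defining $\lambda_{k+1}$:
\begin{itemize}
\item $\lambda_0\ge m$ by the definition of $m$;
\item $\lambda_k(1+\omega_k)\ge\lambda_k\ge m$, since $\omega_k\ge0$;
\item $\varkappa/\ell_k\ge\varkappa/L\ge m$, because every secant slope satisfies $\ell_k=\|B(u_{k+1})-B(u_k)\|/\|u_{k+1}-u_k\|\le L$ by Assumption~\ref{ass:main}.
\end{itemize}
Hence $\lambda_{k+1}\ge m$, and the bound $\lambda_k\in[m,\lambda_0]$ holds for all $k$.

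\emph{Step 2 (bounded variation).} This is the only step with content, because $(\lambda_k)$ need not be monotone once growth is active. I split $\Delta\lambda_{k+1}=\lambda_{k+1}-\lambda_k$ into its positive and negative parts. Since $\lambda_{k+1}\le\lambda_k(1+\omega_k)$, every upward move satisfies
\[
(\Delta\lambda_{k+1})^+\le\lambda_k\omega_k\le\lambda_0\omega_k,
\qquad\text{so}\qquad
\sum_k(\Delta\lambda_{k+1})^+\le\lambda_0\sum_k\omega_k<\infty .
\]
For the downward moves I telescope:
\[
\lambda_N-\lambda_0=\sum_{k<N}(\Delta\lambda_{k+1})^+-\sum_{k<N}(\Delta\lambda_{k+1})^- .
\]
Combined with $\lambda_N\ge m>0$ and the bound on the upward moves, this gives
\[
\sum_{k<N}(\Delta\lambda_{k+1})^-\le\lambda_0-m+\lambda_0\sum_k\omega_k
\]
uniformly in $N$. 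Adding the two parts shows $\sum_k|\lambda_k-\lambda_{k-1}|<\infty$; the index shift at $k=0$, where $\lambda_{-1}=\lambda_0$, contributes nothing. In the pure safeguard case $\omega\equiv0$, the sequence is simply nonincreasing and bounded below, which is the same conclusion.

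\emph{Step 3 (limit).} A sequence of finite total variation is Cauchy, since $|\lambda_n-\lambda_m|\le\sum_{k>m}|\Delta\lambda_k|\to0$. Hence $\lambda_\infty=\lim_k\lambda_k$ exists, and because $[m,\lambda_0]$ is closed, $\lambda_\infty\ge m>0$.

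No genuine obstacle arises. The one point to handle carefully is that the downward variation is not controlled step by step, since cuts can be large. It is controlled globally, by the telescoping identity together with the uniform floor $m$ from Step~1.
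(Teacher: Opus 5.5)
Your proof is correct and follows the paper's route: the same three-branch induction gives $\lambda_k\in[\min\{\lambda_0,\varkappa/L\},\lambda_0]$, then a variation bound gives existence of the limit, which your Cauchy argument makes explicit where the paper leaves it implicit. Your split into upward and downward moves, closed by telescoping, is more careful than the paper's one-line bound $\lambda_0\sum_k\omega_k+\lambda_0$. That bound tacitly caps the total downward movement by $\lambda_0$, which the rule does not guarantee, because later cuts can repeatedly undo growth steps; your bound $2\lambda_0\sum_k\omega_k+\lambda_0-\min\{\lambda_0,\varkappa/L\}$ is what the argument actually yields, and since only finiteness is needed the proposition is unaffected.
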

\begin{proof}
On a cut step $\lambda_{k+1}=\varkappa/\ell_k\ge\varkappa/L$ by $\ell_k\le L$;
on a pure growth step $\lambda_{k+1}=\lambda_k(1+\omega_k)\le\lambda_0$.
Hence $\min\{\lambda_0,\varkappa/L\}\le\lambda_{k+1}\le\lambda_0$ throughout
(induction on $k$), and $\sum_k|\lambda_k-\lambda_{k-1}|
\le\lambda_0\sum_k\omega_k+\lambda_0<\infty$, which is \eqref{eq:lamhyp}.
\end{proof}

\subsection{What the safeguard can and cannot guarantee}
We separate what is proved from what is observed and give two
complementary certificates.  Theorem~\ref{thm:rule-conv} covers the
classical regime in which the steps eventually become conservative
($\lambda_k\le\tfrac1{5L}$); Theorem~\ref{thm:rule-free} covers the
opposite regime in which the initial guess is optimistic but the observed
slopes keep the products $\lambda_k\ell_{k-1}$ small --- unconditionally, since the safeguard enforces
$\lambda_k\ell_{k-1}\le\varkappa$ at every step.  The complementary
event is characterized exactly by the dichotomy below (stated for the pure
safeguard; the growth variant is covered unconditionally by
Theorem~\ref{thm:rule-free}).

\begin{lemma}[dichotomy of the safeguard]\label{lem:dichotomy}
Let $(\lambda_k)$ be generated by Rule~\ref{rule:main} with $\omega_k\equiv0$
and let $\ell_k$ be as above.  Exactly one of the following holds.
\begin{enumerate}[label=(\alph*),leftmargin=2em,itemsep=1pt]
\item \textbf{eventual certification.} There is $K$ such that
$\lambda_k\le\tfrac1{5L}$ for all $k\ge K$.
\item \textbf{persistent slopes.} $\lambda_k\downarrow\lambda_\infty$ with
$\lambda_\infty>\tfrac1{5L}$; moreover, if infinitely many genuine cuts occur, then
$\displaystyle\limsup_{k\to\infty}\ell_k\ge\frac{\varkappa}{\lambda_\infty}$ (no such bound holds in the eventually-constant subcase: e.g.\ a constant local slope $\ell<L$ with $\varkappa/\ell>\tfrac1{5L}$).
\end{enumerate}
In particular the certified regime $\lambda_0\le\tfrac1{5L}$ is a subcase
of~(a) with $K=0$.
\end{lemma}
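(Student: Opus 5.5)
With $\omega_k\equiv0$ the rule reads $\lambda_{k+1}=\min\{\lambda_0,\lambda_k,\varkappa/\ell_k\}=\min\{\lambda_k,\varkappa/\ell_k\}$, because $\lambda_k\le\lambda_0$ by induction. So $(\lambda_k)$ is nonincreasing. By Proposition~\ref{prop:rule} it is also bounded below by $\min\{\lambda_0,\varkappa/L\}>0$. Hence $\lambda_k\downarrow\lambda_\infty>0$. The dichotomy is then a case split on $\lambda_\infty$.

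If $\lambda_\infty\le\tfrac1{5L}$, I want (a), but a strict limit from above could violate it. So I split further. If the sequence is eventually constant, it equals $\lambda_\infty\le\tfrac1{5L}$ from some $K$ on, and (a) holds. If it decreases strictly infinitely often with limit exactly $\tfrac1{5L}$ from above, then (a) fails, and the statement must be read with (b) allowing $\lambda_\infty\ge\tfrac1{5L}$. Alternatively, I note that monotonicity makes (a) equivalent to ``$\lambda_K\le\tfrac1{5L}$ for some $K$'', i.e.\ to $\lambda_k\le\tfrac1{5L}$ eventually. I would take the complement of (a) to be ``$\lambda_k>\tfrac1{5L}$ for all $k$'', which gives $\lambda_\infty\ge\tfrac1{5L}$. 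The boundary case $\lambda_\infty=\tfrac1{5L}$ approached strictly from above is the one delicate point. I would either absorb it into (b) with ``$\ge$'', or state explicitly that in that edge case (b) holds in the nonstrict form. I expect this to be the main obstacle, since it decides whether the strict inequality in (b) survives. Exclusivity is immediate: (a) forces $\lambda_\infty\le\tfrac1{5L}$, while (b) asserts $\lambda_\infty>\tfrac1{5L}$. Exhaustiveness is exactly the argument above. Finally, if $\lambda_0\le\tfrac1{5L}$, monotonicity gives (a) with $K=0$.

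For the slope claim in (b), take the infinitely many genuine cuts $k_j$, where $\lambda_{k_j+1}=\varkappa/\ell_{k_j}<\lambda_{k_j}$. Then $\ell_{k_j}=\varkappa/\lambda_{k_j+1}\ge\varkappa/\lambda_{k_j}$. Since $\lambda_{k_j+1}\to\lambda_\infty$, we get $\ell_{k_j}\to\varkappa/\lambda_\infty$, so $\limsup_k\ell_k\ge\varkappa/\lambda_\infty$.

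For the parenthetical claim that no such bound holds in the eventually-constant subcase, I would exhibit the example directly. Take $\lambda_0$ with $\varkappa/\ell>\lambda_0>\tfrac1{5L}$ and a trajectory on which every secant slope equals $\ell<L$, e.g.\ $B$ linear with a single relevant eigen-direction of size $\ell$ and other directions of norm $L$ never excited. Then no cut ever fires, $\lambda_k\equiv\lambda_0=\lambda_\infty>\tfrac1{5L}$, and $\limsup\ell_k=\ell$ can lie below $\varkappa/\lambda_\infty$.
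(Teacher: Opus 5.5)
Your argument follows the paper's route: with $\omega_k\equiv0$ the steps are nonincreasing, Proposition~\ref{prop:rule} gives the floor $\min\{\lambda_0,\varkappa/L\}$, the proof splits on $\lambda_\infty$, and a genuine cut means $\lambda_{k+1}=\varkappa/\ell_k$. Your version $\ell_{k_j}=\varkappa/\lambda_{k_j+1}\to\varkappa/\lambda_\infty$ is slightly sharper than the paper's $\ell_{k_j}>\varkappa/\lambda_{k_j}$, and your explicit eventually-constant example is fine.

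You depart from the paper only at the boundary case, and there you are right and the paper is wrong. Its proof asserts that $\lambda_\infty\le\frac1{5L}$ plus monotonicity gives $\lambda_k\le\frac1{5L}$ for all large $k$. That fails when $\lambda_k\downarrow\frac1{5L}$ strictly from above. You should not treat this as an obstacle still to be overcome: the case really occurs, so the lemma as printed is false.

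Since $\lambda_k=\min\{\lambda_0,\min_{j<k}\varkappa/\ell_j\}$ (with $\varkappa/0=\infty$), the boundary case happens exactly when $\lambda_0>\frac1{5L}$ and $\sup_j\ell_j=5\varkappa L$ is not attained. Nothing forbids this, because $5\varkappa L\le\frac56L<L$. A concrete instance:
\begin{itemize}
\item Take $\HH=C=\R^2$ and $B(t,w)=(g(t),Lw)$ with $g(t)=5\varkappa L\,t/\sqrt{1+t^2}$.
\item Then $B$ is monotone with best Lipschitz constant $L$, $S=\{0\}$, the $t$-axis is invariant, and $g'(t)<5\varkappa L=g'(0)$ for $t\neq0$.
\item Start on the $t$-axis at a point whose trajectory is not eventually stationary at $0$. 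Then infinitely many secants are nondegenerate.
\item Each nondegenerate secant slope is an average of $g'$, hence $<5\varkappa L$. These secants tend to $g'(0)$ because $u_k\to0$ (Theorem~\ref{thm:rule-free}).
\item With $\lambda_0>\frac1{5L}$ this gives $\lambda_k>\frac1{5L}$ for every $k$ and $\lambda_\infty=\frac1{5L}$, so neither (a) nor (b) holds.
\end{itemize}

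So commit to the corrected dichotomy you already prove. Either (a) holds, or $\lambda_k>\frac1{5L}$ for all $k$, hence $\lambda_\infty\ge\frac1{5L}$, with the slope bound whenever infinitely many cuts occur. Nothing later in the paper depends on the strict inequality: Theorem~\ref{thm:rule-conv} assumes (a) outright, and Corollary~\ref{cor:linear-ad} mentions (b) only to exclude it.
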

\begin{proof}
By Proposition~\ref{prop:rule}, $\lambda_k\downarrow\lambda_\infty\ge
\min\{\lambda_0,\varkappa/L\}>0$.  If $\lambda_\infty\le\tfrac1{5L}$ then since
$(\lambda_k)$ is nonincreasing, $\lambda_k\le\tfrac1{5L}$ for all large $k$:
case~(a).  If $\lambda_\infty>\tfrac1{5L}$ and $\lambda_{k+1}<\lambda_k$
(a genuine cut), then \eqref{eq:rule} forces
$\varkappa/\ell_k<\lambda_k$, i.e.\ $\ell_k>\varkappa/\lambda_k
\ge\varkappa/\lambda_0$.  Hence either the sequence is eventually constant
(no further cuts), or there are infinitely many genuine cuts at indices $k_j$, and then
$\ell_{k_j}>\varkappa/\lambda_{k_j}\to\varkappa/\lambda_\infty$, giving the
stated $\limsup$.
\end{proof}

\begin{theorem}[weak convergence under Rule~\ref{rule:main}]\label{thm:rule-conv}
Under Assumption~\ref{ass:main}, let the filter satisfy
$0\le\beta_k\le\tfrac14$, $\sum_k\beta_k<\infty$, let $x_{-1}=x_0=u_0\in C$,
let $(x_k)$ be defined by \eqref{eq:scheme} (equivalently
Algorithm~\ref{alg:main}) with steps $(\lambda_k)$ generated by
Rule~\ref{rule:main} and $\sum_k|\lambda_k-\lambda_{k-1}|<\infty$, and
assume the \emph{certification event} of Lemma~\ref{lem:dichotomy}(a): there
exists $K$ with $\lambda_k\le\tfrac1{5L}$ for all $k\ge K$.  Then $(x_k)$
converges weakly to a point of $S$, with $\|x_{k+1}-x_k\|\to0$ and
$\|u_k-x_k\|\to0$.  In particular this holds whenever the initial guess is
conservative, $\lambda_0\le\tfrac1{5L}$, with no knowledge of $L$ used
anywhere by the algorithm (take $\omega_k\equiv0$, in which case
Lemma~\ref{lem:dichotomy}(a) holds with $K=0$).
\end{theorem}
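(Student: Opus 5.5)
The plan is to reduce Theorem~\ref{thm:rule-conv} to Theorem~\ref{thm:adaptive} applied to the shifted trajectory starting at index $K$. Theorem~\ref{thm:adaptive} needs three things: steps confined to $[\lambda_{\min},\lambda_0']$ with $\lambda_0'\le\tfrac1{5L}$, bounded variation, and a filter in $[0,\tfrac14]$ with $\sum_k\beta_k<\infty$. The filter conditions are assumed outright. Proposition~\ref{prop:rule} gives the uniform floor $\lambda_k\ge\lambda_{\min}:=\min\{\lambda_0,\varkappa/L\}>0$ for every $k$. The variation condition $\sum_k|\lambda_k-\lambda_{k-1}|<\infty$ is either assumed or supplied by Proposition~\ref{prop:rule}. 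The only missing ingredient is the ceiling: the theorem's $\lambda_0$ may exceed $\tfrac1{5L}$. The certification event fixes this on the tail, since $\lambda_k\le\tfrac1{5L}=:\lambda_0'$ for $k\ge K$.

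Next, I restart the analysis at time $K$. The proof of Theorem~\ref{thm:adaptive} never uses $x_{-1}=x_0=u_0$ beyond the finiteness of the initial Lyapunov value. Its master estimate \eqref{eq:master-ad} is established only for $k\ge k_0$, and the finitely many earlier indices are already absorbed into the constant $a_{k_0}$. I will therefore run Steps~1--4 of that proof for $k\ge\max\{K+1,k_0\}$, with $\lambda_0'$ replacing $\lambda_0$ in every budget bound. All budget coefficients are increasing in $\lambda_k$ and $\lambda_k\le\lambda_0'$ there, so the exact rationals of \eqref{eq:bk} still dominate. The mismatch residuals $\mathrm{R}_k$ carry the factor $|\Delta\lambda_k|$, which is summable. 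The single step that touches $\lambda_{K-1}$, possibly larger than $\tfrac1{5L}$, is the projection identity \eqref{eq:projid-ad} at $k=K$. That identity holds for any positive step, so its index is simply one of the finitely many absorbed into a finite starting constant. The Lyapunov weight $2\lambda_{k-1}L$ is bounded by $2\lambda_0 L$ in any case.

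Steps~5--8 of Theorem~\ref{thm:adaptive} then follow unchanged. Lemma~\ref{lem:gronwall2} gives boundedness, with $A$ now equal to the finite $a_{k_1}$ at the restart index $k_1$ plus $\sum\kappa_j$. Summability of $b_k$ yields $\|d_k\|,\|e_k\|\to0$. For cluster points, any limit $\lambda_\infty$ of a subsequence of steps satisfies $\lambda_\infty\ge\lambda_{\min}>0$, so Minty's lemma applies. The Opial chain is unaffected because the Lyapunov shift $2\lambda_{k-1}\inner{B(z)}{x_{k-1}-z}$ vanishes along weakly convergent subsequences to solutions. For the final clause, take $\omega\equiv0$ and $\lambda_0\le\tfrac1{5L}$; the steps are then nonincreasing and bounded by $\lambda_0$, so case~(a) of Lemma~\ref{lem:dichotomy} holds with $K=0$ and the reduction is immediate.

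The main obstacle is showing that the pre-$K$ transient, run with uncertified steps, cannot corrupt the estimates. Before $K$ there is no dissipation certificate and the iterates could grow, but only finitely, since finitely many iterations of a Lipschitz map with projection keep everything finite. I plan to make this explicit by noting that $r_K$, $\|e_{K-1}\|$ and $\|d_K\|$ are finite. These determine $a_{K+1}$, and every later inequality is a tail statement.
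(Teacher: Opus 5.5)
Your proposal is correct and follows the paper's proof essentially step for step. Both proofs reduce the theorem to the adaptive analysis of Theorem~\ref{thm:adaptive} on the tail $k\ge\max\{K+1,k_0\}$, using the corner value $\tfrac1{5L}$ in every budget, tail versions of Lemma~\ref{lem:gronwall2} and of the Opial chain, and the floor $\min\{\lambda_0,\varkappa/L\}$ from Proposition~\ref{prop:rule} for the Minty limit passage. One small point: $\lambda_{K-1}$ also enters $a_K$ and $\Delta\lambda_K$, not only the projection identity at $k=K$, but since you start the master estimate at $k\ge K+1$ this does not affect the argument.
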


\begin{proof}
The proof is a prefix reduction to the adaptive analysis of
Section~\ref{sec:adaptive}, and we give the details because two points
deserve explicit verification: the adaptive proof does not use the
initialization $x_{-1}=x_0=u_0$ beyond the finiteness of the starting
Lyapunov value, and the Gronwall lemmas apply on tails.

Fix $K$ as in the hypothesis. By assumption $\lambda_k\le\tfrac1{5L}$ for all
$k\ge K$, and by Proposition~\ref{prop:rule}
$\min\{\lambda_0,\varkappa/L\}\le\lambda_k$ throughout; we use $\tfrac1{5L}$ as
the \emph{corner} value for all budgets below (every budget coefficient is
increasing in the step, so its value at $\lambda_k$ is bounded by its corner
value at $\tfrac1{5L}$). Note that with growth steps $(\lambda_k)$ need not be
nonincreasing after $K$ --- it may rise back towards $\tfrac1{5L}$ --- so the
corner, rather than $\lambda_K$, is the correct reference value; the
bounded-variation hypothesis supplies the only control on $\Delta\lambda_k$
that the absorption below needs.

\medskip\noindent\emph{Step A (identities and master estimate from $K$).}
The slot identity $u_k=x_k+d_k-\beta_ke_{k-1}$, the recursion
$e_k=d_k-\beta_ke_{k-1}$, the projection identity \eqref{eq:projid-ad} and
the bound \eqref{eq:R1b} on the step-mismatch residual hold for every
$k\ge1$: they are algebraic consequences of the scheme and of
Lemma~\ref{lem:proj} alone, with no reference to initialization.  The same is
true of the bounds \eqref{eq:T1-ad}--\eqref{eq:T2-ad} and of the telescoping
cancellation of the $s_{k-1}$ and $\delta_k$ terms in Step~4 of the proof of
Theorem~\ref{thm:adaptive} --- the only place the initialization entered the
original argument was in naming $a_0$, which is irrelevant here.  The net
$x$-slot coefficient is $2L\Delta\lambda_k$; both signs are absorbed exactly
as in Steps 3--4 of the proof of Theorem~\ref{thm:adaptive} (see \eqref{eq:R1b}
and the display following it), using $\lambda_k\le\tfrac1{5L}$ for all $k\ge K$.  Consequently, with
\[
a_k:=\|x_k-z\|^2+2\lambda_{k-1}L\|x_k-u_{k-1}\|^2
+2\lambda_{k-1}\inner{B(z)}{x_{k-1}-z}+\tfrac13\|e_{k-1}\|^2
\quad(k\ge K),
\]
and with $\epsilon_k,\kappa_k,b_k$ as in \eqref{eq:master-ad} evaluated at the corner $\lambda_0:=\tfrac1{5L}$ (legitimate, since
$\lambda_k\le\tfrac1{5L}$ for $k\ge K$ and the budget coefficients are
increasing in the step), there is $k_0\ge K+1$ such that for all $k\ge k_0$,
\[
a_{k+1}\le a_k-b_k+\epsilon_k r_{k-1}^2+\kappa_k .
\]

\medskip\noindent\emph{Step B (boundedness on the tail).}
From the definition of $a_k$ and $-\inner{B(z)}{x_{k-1}-z}\le\|B(z)\|r_{k-1}$,
$r_k^2\le a_k+2\lambda_{k-1}\|B(z)\|r_{k-1}\le a_k+\gamma r_{k-1}$ for all
$k\ge K+1$, with $\gamma:=2\|B(z)\|/(5L)$ (as $\lambda_{k-1}\le\tfrac1{5L}$
for $k-1\ge K$).  Summing the master
estimate over $k\ge k_0$ and discarding $-b_k$,
$a_n\le a_{k_0}+\sum_{k_0\le j<n}\bigl(\epsilon_jr_{j-1}^2+\kappa_j\bigr)$.
Hence for all $n\ge k_0+1$,
\[
r_n^2\le A+\gamma r_{n-1}+\!\!\sum_{k_0\le j<n}\!\!\bigl(\epsilon_j r_{j-1}^2+\kappa_j\bigr),
\qquad
A:=a_{k_0}+\!\!\sum_{j\ge k_0}\!\kappa_j<\infty,\quad
\gamma:=2\|B(z)\|/(5L).
\]
This is \eqref{eq:gronrec2} on the index set $\{n\ge k_0+1\}$ with the finite
history $r_{k_0-1},r_{k_0-2}$; the proof of Lemma~\ref{lem:gronwall2} uses
only the recursion on such a tail (the constants $C_J$ absorb the finitely
many exceptional indices).  Therefore $M:=\sup_{k\ge K}r_k<\infty$, and
$\sup_{k\ge K}a_k<\infty$.

\medskip\noindent\emph{Step C (limit passage).}
Steps 6--8 of the proofs of Theorems~\ref{thm:main} and~\ref{thm:adaptive}
are prefix-independent: $\sum_kb_k<\infty$ gives the vanishing increments;
every weak cluster point $\bar x$ of $(x_k)$ lies in $S$ because
$\lambda_{k_i}\to\lambda_\infty\ge\min\{\lambda_0,\varkappa/L\}>0$ along
the convergent subsequence; and the Opial argument shows there is at most one weak cluster
point.  Hence $x_k\rightharpoonup x^\star\in S$.
\end{proof}

\begin{theorem}[weak convergence without knowledge of $L$: data-driven Lyapunov]\label{thm:rule-free}
Under Assumption~\ref{ass:main}, let the filter satisfy $0\le\beta_k\le\tfrac14$,
$\sum_k\beta_k<\infty$, let $x_{-1}=x_0=u_0\in C$, and let $(x_k)$ be generated
by \eqref{eq:scheme} with steps $(\lambda_k)$ produced by
Rule~\ref{rule:main} (with or without growth steps), where the initial guess
$\lambda_0>0$ is \emph{arbitrary} and the safeguard constant satisfies
\begin{equation}\label{eq:kappa-ceiling}
0<\varkappa\le\tfrac16 .
\end{equation}
Then $(x_k)$ converges weakly to a point of $S$, with
$\|x_{k+1}-x_k\|\to0$ and $\|u_k-x_k\|\to0$, \emph{unconditionally}: no
knowledge of $L$, no condition on $\lambda_0$, and no additional event is
required --- the safeguard enforces $\lambda_k\ell_{k-1}\le\varkappa\le\tfrac16$
at every step, the drift terms are summable, and
$\lambda_k\ge\min\{\lambda_0,\varkappa/L\}>0$ provides the strict positivity
that the limit passage (Step~7) needs.
\end{theorem}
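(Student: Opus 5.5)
Our plan is to rerun the adaptive analysis of Theorem~\ref{thm:adaptive}, but with the global constant $L$ removed from the Young charges. The key change is in the memory weight of the Lyapunov function, which becomes data-driven:
\[
a_k:=\|x_k-z\|^2+2\lambda_k\ell_{k-1}\|x_k-u_{k-1}\|^2+2\lambda_{k-1}\inner{B(z)}{x_{k-1}-z}+\tfrac13\|e_{k-1}\|^2 .
\]

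\textbf{Step 1: the Lipschitz test with observed slopes.} In the term $\mathrm{T}_1=2\lambda_k\inner{B(u_k)-B(u_{k-1})}{u_k-x_{k+1}}$ we use the observed secant slope $\|B(u_k)-B(u_{k-1})\|=\ell_{k-1}\|u_k-u_{k-1}\|$ instead of $L$. The rule updates the step before it is used, so $\lambda_k\ell_{k-1}\le\varkappa\le\tfrac16$ holds in the same iteration. Applying the splits \eqref{eq:splitA}--\eqref{eq:splitB} then gives \eqref{eq:T1} with $\lambda L$ replaced by $\Lambda_k:=\lambda_k\ell_{k-1}\le\tfrac16<\tfrac15$.

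\textbf{Step 2: carry over the budgets and collect the drifts.} The budget identities of Step~4 of Theorem~\ref{thm:main} are increasing in the product. At the corner $\tfrac16$ they are therefore at least as negative as the rationals $-\tfrac1{45},-\tfrac1{25},-\tfrac6{125}$. The remaining terms are handled as follows:
\begin{itemize}[leftmargin=2em,itemsep=1pt]
\item The filter term \eqref{eq:T2} still involves $\|B(x_{k-1})-B(z)\|\le L r_{k-1}$, because $z$ is not an iterate. Its contribution $\lambda_k\beta_k L(25r_{k-1}^2+\tfrac1{25}\|e_{k-1}\|^2)$ has weight $\le\lambda_0L\beta_k$. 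This is summable, so $L$ appears only in the drift $\epsilon_k$.
\item The step-mismatch residual $\mathrm{R}_k$ and the anchor residual \eqref{eq:rho} are treated exactly as in Theorem~\ref{thm:adaptive}. Proposition~\ref{prop:rule} supplies $\sum_k|\Delta\lambda_k|<\infty$ and $\lambda_k\in[\min\{\lambda_0,\varkappa/L\},\lambda_0]$. This produces summable $\epsilon_k,\kappa_k$ and a vanishing slack $\rho_k$ in the $\|d_{k+1}-d_k\|^2$ and $\|e_{k-1}\|^2$ budgets.
\end{itemize}

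\textbf{Step 3 (main obstacle): the memory channel.} The charge from $\mathrm{T}_1$ is $2\Lambda_k\|x_k-u_{k-1}\|^2$, which cancels the weight at index $k$. Moving to $a_{k+1}$ creates the shifted weight $2\lambda_{k+1}\ell_k\|x_{k+1}-u_k\|^2$, and the product $\lambda_{k+1}\ell_k$ is also $\le\varkappa$ by the safeguard. This shifted term is paid for by the split $\|x_{k+1}-u_k\|^2\le\tfrac85\|d_{k+1}-d_k\|^2+\tfrac16\|e_{k-1}\|^2$, again at corner $\tfrac16$. Its index mismatch relative to $\Lambda_k$ is harmless because both products are bounded by $\varkappa$ regardless of sign. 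This is what removes every sign restriction, so growth steps are covered.

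The delicate point is that the weight $\lambda_k\ell_{k-1}$ is not monotone in $k$. If the fourth channel does not telescope cleanly, we will take the weight $2\varkappa$ (a constant) and accept the excess $2(\varkappa-\Lambda_k)\|x_k-u_{k-1}\|^2\ge0$ as an extra dissipation-neutral term. This requires checking that the budgets still close with $\varkappa$ in place of $\Lambda_k$ wherever the memory weight appears, which they do at $\varkappa\le\tfrac16<\tfrac15$.

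\textbf{Step 4: boundedness and the limit.} We obtain $a_{k+1}\le a_k-b_k+\epsilon_kr_{k-1}^2+\kappa_k$ for $k\ge k_0$, with $b_k$ bounded below by positive multiples of $\|d_k\|^2,\|d_{k+1}-d_k\|^2,\|e_{k-1}\|^2$.
\begin{itemize}[leftmargin=2em,itemsep=1pt]
\item Lemma~\ref{lem:gronwall2}, used with $r_k^2\le a_k+2\lambda_0\|B(z)\|r_{k-1}$, gives boundedness.
\item Summing the master estimate gives vanishing increments.
\item Step~7 of Theorem~\ref{thm:adaptive} applies with $\lambda_\infty\ge\min\{\lambda_0,\varkappa/L\}>0$.
\item For the Opial step, the extra weight $2\lambda_k\ell_{k-1}\|x_k-u_{k-1}\|^2\le2\varkappa\|x_k-u_{k-1}\|^2\to0$. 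Hence $\Phi_k(z)$ converges, and Step~8 gives weak convergence to a point of $S$.
\end{itemize}
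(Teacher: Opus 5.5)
Your proposal is correct and is essentially the paper's proof. The data-driven memory weight $2\lambda_k\ell_{k-1}$ cancels the $\mathrm{T}_1$ charge of the same step exactly, and the shifted weight $2\lambda_{k+1}\ell_k\|x_{k+1}-u_k\|^2$ is priced at the ceiling $\varkappa\le\tfrac16$, so your constant-weight fallback is never needed. $L$ survives only in summable drifts and vanishing slacks; strictly, the $\tfrac1{25}\lambda_k\beta_kL\|e_{k-1}\|^2$ part of the filter term is a vanishing slack $\rho_k$ in the $e$-budget rather than part of $\epsilon_k$, since $\lambda_0L$ is uncontrolled. The one cosmetic difference is the anchor: you keep the weight $2\lambda_{k-1}$ of Theorem~\ref{thm:adaptive} and absorb its residual \eqref{eq:rho} as drift, whereas the paper writes $2\lambda_k$ there.
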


\begin{proof}
The proof replaces the global constant $\lambda L$ in the Young charges of
Theorem~\ref{thm:main} by the observed products $\lambda_k\ell_{k-1}$, and
weights the $\|x_k-u_{k-1}\|^2$ channel of the Lyapunov function by the
matching data-driven coefficient, so that no global Lipschitz constant
remains in any budget.

\medskip\noindent\emph{Step 1 (data-driven Lyapunov and master estimate).}
Fix $z\in S$ and use the notation of the proof of Theorem~\ref{thm:main}.
Since $\ell_{k-1}$ is the Lipschitz slope of $B$ on the segment
$[u_{k-1},u_k]$, the term $\mathrm{T}_1$ at step $k$ satisfies
\begin{equation}\label{eq:T1-dd}
\mathrm{T}_1\le\lambda_k\ell_{k-1}\Bigl[2\|e_k\|^2+\tfrac85\|d_{k+1}-d_k\|^2
+2\|x_k-u_{k-1}\|^2+\tfrac16\|e_{k-1}\|^2\Bigr],
\end{equation}
the exact analogue of \eqref{eq:T1} with $\lambda L$ replaced by
$\lambda_k\ell_{k-1}$.  Define, for $k\ge1$,
\begin{equation}\label{eq:lyap-dd}
a_k:=\|x_k-z\|^2+2\lambda_k\ell_{k-1}\|x_k-u_{k-1}\|^2
+2\lambda_k\inner{B(z)}{x_{k-1}-z}+\tfrac13\|e_{k-1}\|^2 ,
\end{equation}
with the $k=0$ term $2\lambda_0\ell_{-1}\|x_0-u_{-1}\|^2=0$ under the stated
convention.  The weight $2\lambda_k\ell_{k-1}$ equals the coefficient of
$\|x_k-u_{k-1}\|^2$ in \eqref{eq:T1-dd}, so the $\|x_k-u_{k-1}\|^2$ channel
cancels \emph{exactly} in $a_{k+1}-a_k$ --- with no sign condition on
$\Delta\lambda_k$ and no global $L$.  Assembling \eqref{eq:T1-dd},
\eqref{eq:T2}, \eqref{eq:tele} and the projection identity \eqref{eq:projid-ad}
exactly as in Steps 2--4 of the proofs of Theorems~\ref{thm:main}
and~\ref{thm:adaptive} (the $s_{k-1}$ and $\delta_k$ terms cancel; the
step-mismatch residual $\mathrm{R}_k$ of \eqref{eq:R1b} is absorbed since
$\sum_k|\Delta\lambda_k|<\infty$ by Proposition~\ref{prop:rule} --- including
the term $4L^2\|e_{k-1}\|^2|\Delta\lambda_k|$ of \eqref{eq:R1b}, which is
absorbed in exactly the same way by enlarging $k_0$; no sign condition on
$\Delta\lambda_k$ is needed anywhere), we obtain,
for all sufficiently large $k$,
\begin{equation}\label{eq:master-dd}
a_{k+1}\le a_k-b_k+\epsilon_k r_{k-1}^2+\kappa_k ,
\end{equation}
with $\sum_k\epsilon_k<\infty$, $\sum_k\kappa_k<\infty$, and budgets
\begin{equation}\label{eq:bk-dd}
b_k:=\tfrac19\|d_k\|^2+\tfrac15\|d_{k+1}-d_k\|^2
+\Bigl(\tfrac1{12}-\rho_k\Bigr)\|e_{k-1}\|^2\ge0 ,
\end{equation}
where $\rho_k:=\tfrac1{25}\lambda_k\beta_kL\to0$ as $k\to\infty$
(because $\beta_k\to0$ and $\lambda_k\le\lambda_0$; the $r_{k-1}^2$-drift
$\epsilon_k$ separately contains $25\lambda_k\beta_kL$, which is summable
for the same reason).  Indeed, writing
$\Lambda_k:=\lambda_k\ell_{k-1}\le\varkappa\le\tfrac16$ (automatic
from \eqref{eq:rule}), the four channel identities are
\begin{align*}
\|d_k\|^2 &: \ -1+\tfrac83\Lambda_k+\tfrac49=-\tfrac19,\\
\|d_{k+1}-d_k\|^2 &: \ -1+\tfrac85\Lambda_k+\tfrac{16}{5}\Lambda_{k+1}\le-\tfrac15,\\
\|x_k-u_{k-1}\|^2 &: \ 2\Lambda_k-2\Lambda_k=0,\\
\|e_{k-1}\|^2 &: \ \Lambda_k\bigl(\tfrac16+\tfrac12\bigr)+\tfrac13\Lambda_{k+1}
+\tfrac{\lambda_k\beta_kL}{25}-\tfrac14
\le\tfrac16-\tfrac14+\rho_k=-\tfrac1{12}+\rho_k .
\end{align*}
All are exact rationals at the ceiling $\Lambda_k=\Lambda_{k+1}=\tfrac16$,
and $\rho_k\to0$, so $b_k\ge0$ for all $k\ge k_0$ with a finite $k_0$; the
finitely many exceptions $k<k_0$ are folded into the constant $a_{k_0}$. (Assembly as in
Appendix~\ref{app:assembly}, with $\Lambda$ replaced by the observed product
$\lambda_k\ell_{k-1}$.)

\medskip\noindent\emph{Step 2 (boundedness).}
From \eqref{eq:lyap-dd} and $-\inner{B(z)}{x_{k-1}-z}\le\|B(z)\|r_{k-1}$,
\[
r_k^2\le a_k+2\lambda_k\|B(z)\|\,r_{k-1}\le a_k+\gamma r_{k-1},
\qquad \gamma:=2\lambda_0\|B(z)\|<\infty .
\]
Summing \eqref{eq:master-dd} from $k_0$ and discarding $-b_k$,
$a_n\le a_{k_0}+\sum_{k_0\le j<n}(\epsilon_jr_{j-1}^2+\kappa_j)$, so
$(r_k)_{k\ge k_0}$ satisfies the tail recursion \eqref{eq:gronrec2} with
finite history and finite $A,\gamma$; Lemma~\ref{lem:gronwall2} gives
$M:=\sup_k r_k<\infty$ and $\sup_k a_k<\infty$.

\medskip\noindent\emph{Step 3 (limit passage).}
Steps 6--8 of Theorems~\ref{thm:main} and~\ref{thm:adaptive} are
prefix-independent: $\sum_kb_k<\infty$ yields $\|d_k\|\to0$ and
$\|e_k\|\to0$; every weak cluster point lies in $S$ because
$\lambda_{k_i}\to\lambda_\infty\ge\min\{\lambda_0,\varkappa/L\}>0$ and $B$
is bounded on the bounded trajectory; and the Opial argument gives uniqueness of the weak
limit.  It remains to record that $a_k$ converges: from \eqref{eq:master-dd},
$a_{k+1}\le a_k+\eta_k$ with $\eta_k:=-b_k+\epsilon_k r_{k-1}^2+\kappa_k$ satisfying
$\sum_k\eta_k^+\le M^2\sum_k\epsilon_k+\sum_k\kappa_k<\infty$ and
$a_k\ge-2\lambda_0\|B(z)\|\,M$ (by \eqref{eq:lyap-dd} and Step~2), so
Lemma~\ref{lem:pospart} applies. For the last step we use that $a_k$ and
$\Phi_k(z)=\|x_k-z\|^2+2\lambda_k\inner{B(z)}{x_{k-1}-z}$ then have a common
limit: their difference
$2\lambda_k\ell_{k-1}\|x_k-u_{k-1}\|^2+\tfrac13\|e_{k-1}\|^2$ vanishes as
$k\to\infty$, since $\lambda_k\ell_{k-1}\le\varkappa\le\tfrac16$ at every step and
$\|x_k-u_{k-1}\|^2\le2\|d_k\|^2+2\|e_{k-1}\|^2\to0$.
\end{proof}

\begin{remark}[two complementary certificates]\label{rem:budget-event}
No event needs checking: \eqref{eq:rule} enforces
$\lambda_k\ell_{k-1}\le\varkappa\le\tfrac16$
at every step, the drift terms are summable independently of any condition, and $\lambda_k\ge
\min\{\lambda_0,\varkappa/L\}>0$ holds because every secant slope satisfies $\ell_k\le L$ --- Lipschitzness on the queried pair alone.
Theorem~\ref{thm:rule-free} is complementary to
Theorem~\ref{thm:rule-conv}: when $\lambda_k\le\tfrac1{5L}$ eventually (with
$\omega\equiv0$), the same computation certifies the budgets at ceiling
$\Lambda_k\le\lambda_kL\le\tfrac15$, yielding
$-\tfrac1{45},-\tfrac1{25},-\tfrac6{125}$, so the data-driven argument
reproves Theorem~\ref{thm:rule-conv}; Theorem~\ref{thm:rule-free} adds the
regime $\lambda_0\gg\tfrac1{5L}$ with flat observed slopes, which
Theorem~\ref{thm:rule-conv} cannot reach.  Under the growth variant the
certification event of Lemma~\ref{lem:dichotomy}(a) is not automatic (the
step may rise back above $1/(5L)$ when the observed slopes fall), and
Theorem~\ref{thm:rule-free} is the certificate that covers growth
unconditionally.  The ceiling $\tfrac16$ is an artifact of the clean splits;
the same identities tolerate any ceiling $<\tfrac5{24}$, and numerical
optimization of the split parameters yields the best barrier
$\approx0.272$ (Appendix~\ref{app:barrier}; compare Remark~\ref{rem:improved}).
\end{remark}

\begin{remark}[non-anticipativity of the data-driven weight]\label{rem:nonanticip}
The weight $2\lambda_k\ell_{k-1}$ is \emph{retrospective but
non-anticipative}: at iteration $k$ it is fully determined by $(u_k,B(u_k))$
--- just computed --- and the stored pair $(u_{k-1},B(u_{k-1}))$, all of which
are available \emph{before} $x_{k+1}$ is formed.  Consequently the Lyapunov
function $a_k$ of \eqref{eq:lyap-dd} is measurable with respect to the
natural filtration of the iteration up to the $k$-th evaluation (denoted
$\mathcal F_k$ in a stochastic extension), and the master recursion
\eqref{eq:master-dd} holds pathwise.  No circularity arises: the step
$\lambda_k$ is chosen from information at hand, and the dissipation budget
is charged only against the same pair $(u_k,u_{k-1})$ that produced it.
\end{remark}

\subsection{Discussion of the rule}
Four remarks complete the picture.  First, \emph{why the data-driven Lyapunov
works}.  The obstruction to an $L$-free certificate in
Theorem~\ref{thm:rule-conv} was that the budget identities involve the global
constant $L$ in two places: the $\|x_{k+1}-u_k\|^2$-channel of the Lyapunov
function, and the solution anchor $z$ in the filter drift
$2\lambda_k\beta_kL\,r_{k-1}\|e_{k-1}\|$.  The weight
$2\lambda_k\ell_{k-1}$ removes the first: it matches the coefficient that the
split of $\|u_k-u_{k-1}\|^2$ produces at the \emph{same} step, so the channel
cancels exactly and every remaining Young charge carries the observed product
$\lambda_k\ell_{k-1}$, which the safeguard controls.  The second obstruction
dissolves on its own: the anchor's constant $L$ is multiplied by the summable
filter $\beta_k$, so its budget contribution vanishes as $k\to\infty$ and its
drift is absorbed by the Gronwall lemma.  The floor, it turns out, was never needed: since every secant slope
satisfies $\ell_k\le L$ (Lipschitzness on the queried pair), the
floorless rule \eqref{eq:rule} keeps
$\lambda_k\ge\min\{\lambda_0,\varkappa/L\}>0$ automatically, and the products $\lambda_k\ell_{k-1}$ are controlled at every step.
Theorem~\ref{thm:rule-free} is accordingly unconditional.  The idea is in the spirit of parameter-free
analyses for minimization \cite{malitskymishchenko2020}, adapted here to the
variational-inequality anchor through the filter's summability.

Second, \emph{guidance for the parameters}.  The safeguard constant now
carries an explicit certified range: $\varkappa\le\tfrac16$ guarantees the
budgets on every step where the safeguard is active
(Theorem~\ref{thm:rule-free}); $\tfrac16$ is exactly the clean-split ceiling
of the data-driven identities (Remark~\ref{rem:budget-event}), and the best barrier we could find by optimizing
the split parameters over the whole quadratic--Young family
is $\approx0.272$ (Appendix~\ref{app:barrier}) --- still below the
empirically stable $\varkappa=0.3$ used in the experiments of
Section~\ref{sec:numerics}; closing that gap is open.  Smaller $\varkappa$
cuts more aggressively on stiff stretches and is slower on flat ones; the
certified statement is insensitive to the choice.  An optional floor $\lambda_{\min}$ may be kept for numerical safety, but it
is provably inactive whenever $\lambda_{\min}\le\varkappa/L$; in the
experiments of Section~\ref{sec:numerics} it never binds.
  If $\lambda_{\min}>\varkappa/L$ the floor silently overrides the safeguard cut, and if $\lambda_{\min}L>\tfrac23$ the floored step can leave the stability range of even the scalar recursion of Counterexample~\ref{cx:summability}; we therefore recommend $\lambda_{\min}=0$ (no floor), as in Rule~\ref{rule:main}.  The growth factors
may be any summable sequence; we use $\omega_k=10/(k+1)^2$ (with
$\sum_k\omega_k<\infty$), so the growth is a bounded-variation perturbation
covered by Theorems~\ref{thm:adaptive}, \ref{thm:rule-conv}
and~\ref{thm:rule-free}.  The initial
guess $\lambda_0$ may be arbitrarily optimistic --- a cheap underestimate of
$L$, say --- since Theorem~\ref{thm:rule-free} makes no assumption on it
beyond finiteness of $\lambda_0$ (which enters only the constants $\gamma$
and $a_{k_0}$).

Third, \emph{comparison of the two certificates}.  If an upper estimate of
$L$ is available and $\lambda_0\le\tfrac1{5L}$, Theorem~\ref{thm:rule-conv}
applies and is the simpler statement; its content is reproduced by the
data-driven argument at ceiling $\tfrac15$
(Remark~\ref{rem:budget-event}).  If no estimate of $L$ is available, or if
the available estimate is pessimistic, Theorem~\ref{thm:rule-free} certifies
convergence whenever the observed products $\lambda_k\ell_{k-1}$ stay below
$\tfrac16$ --- a condition the algorithm can check online and that holds
identically on non-floor steps with $\varkappa\le\tfrac16$.  The experiments
of Section~\ref{sec:numerics} show both regimes: the certified
$\varkappa=\tfrac16$ run improves on the certified constant step, and the
aggressive $\varkappa=0.3$ run, which lies outside the certified ceiling
$\varkappa\le\tfrac16$ (and outside the best whole-family barrier we could find,
$\approx0.272$; Appendix~\ref{app:barrier}), is stable and much faster.

Fourth, the growth variant is part of Rule~\ref{rule:main} itself; all
convergence statements above cover it for any summable $(\omega_k)$, and the
experiments of Section~\ref{sec:numerics} use $\omega_k=10/(k+1)^2$.

\section{Numerical experiments}\label{sec:numerics}
\subsection{Test problems}
All examples are unconstrained convex--concave saddle-point problems
$\mathrm{VI}(\R^{n_x}\times\R^{n_y},B)$ with
\begin{equation}\label{eq:prob}
B(x,y)=\bigl(b\odot\psi(x)+A^{\top}y,\,-Ax\bigr),\qquad
\psi(t)=\frac{t^{3}}{1+t^{2}},
\end{equation}
where $\odot$ is the coordinatewise product.  The map $B$ is monotone (it is
the saddle operator of $f(x)+\langle Ax,y\rangle-g(y)$ with
$f'(x)=b\odot\psi(x)$, $\nabla^2 f\succeq0$) and Lipschitz; we use the certified
bound $L=\max_t\psi''\text{-contribution}+ \sigma_{\max}(A)= \tfrac98 b_{\max}+1.16$
(since $\psi'(t)\le\tfrac98$ with equality at $|t|=\sqrt3$).
\begin{description}[leftmargin=1.6em,style=nextline,itemsep=2pt]
\item[N1 ($(n_x,n_y)=(100,50)$, $n=150$).] $b\in\R^{100}$ has entries
$b_i=\exp(\ln0.3+\frac{i-1}{99}\ln\frac{4}{0.3})$ (log-spaced in $[0.3,4]$,
curvature spread $\approx13\times$); $A\in\R^{50\times100}$ is drawn from
\texttt{numpy.random.default\_rng(11)}, its singular spectrum is compressed by
the factor $8$ ($\sigma_j=\sigma_1\cdot8^{-j/50}$) and it is then rescaled to
$\sigma_{\max}(A)=1.16$; $x_0=(1,\dots,1)\in\R^{150}$; $L=\tfrac98\cdot4+1.16=5.66$.
\item[N3 ($(n_x,n_y)=(200,100)$, $n=300$).] As N1 with $A\in\R^{100\times200}$
from \texttt{default\_rng(11)} ($\sigma_j=\sigma_1\cdot8^{-j/100}$),
$b\in\R^{200}$ log-spaced in $[0.3,4]$; $x_0=(1,\dots,1)\in\R^{300}$; $L=5.66$.
\item[N4 ($(n_x,n_y)=(400,200)$, $n=600$).] As N3 with $A\in\R^{200\times400}$
and $b\in\R^{400}$; $x_0=(1,\dots,1)\in\R^{600}$; $L=5.66$.
\end{description}
The certified Lipschitz bound is used by every method: no grid estimate of the
local slope is computed or needed.

\subsection{Methods and parameters}
Table~\ref{tab:params} lists the parameters of every method; the constant-step
methods use the stated fraction of their respective certified bounds, and the
adaptive methods start from $\lambda_0=1$ with the safeguard floor
$10^{-3}$ (inactive in all runs).
\begin{table}[H]
\centering\small
\caption{Parameters used in all examples.}\label{tab:params}
\begin{tabularx}{\textwidth}{@{}l l >{\raggedright\arraybackslash}X c@{}}
\toprule
method & reference & parameters & evals/proj.\\
\midrule
EG & \cite{korpelevich1976} & $\lambda=0.95/L$ & $2$/$2$\\
FBF & \cite{tseng2000} & $\lambda=0.5/L$ & $2$/$1$\\
adaEG & safeguard \eqref{eq:rule} transplanted to EG & $\lambda_0=1$, $\varkappa=0.3$, floor $10^{-3}$ & $2$/$1$\\
PRG & \cite{malitsky2015} & $\lambda=0.95(\sqrt2-1)/L$ & $1$/$1$\\
PEG & \cite{popov1980,malitskytam2020} & $\lambda=0.30/(3L)$ & $1$/$1$\\
ours (const) & Theorem~\ref{thm:main} & $\lambda=0.99/(5L)$ & $1$/$1$\\
ours (Rule~\ref{rule:main}) & Theorem~\ref{thm:rule-free} & $\varkappa\in\{\tfrac16,0.3\}$, $\omega_k=10/(k+1)^2$, $\lambda_0=1$, floor $10^{-3}$ & $1$/$1$\\
aGRAAL & \cite{malitsky2020} & $\varphi=1.5$, $\gamma=1/\varphi+1/\varphi^2$, $\alpha_0=\alpha_1=1$, cap $10^{10}$ & $1$/$1$\\
\bottomrule
\end{tabularx}
\end{table}

\subsection{Protocol}
All one-call methods use the summable filter $\beta_k=0.25/(k+1)^2$ except PRG
and aGRAAL which are run in their canonical (pure) form; Rule~\ref{rule:main}
is run with growth $\omega_k=10/(k+1)^2$.  The stopping rule is the relative
natural residual $\|x_k-P_C(x_k-\lambda_{\rm ref}B(x_k))\|/\|x_0-P_C(x_0-\lambda_{\rm ref}B(x_0))\|\le10^{-6}$
with $\lambda_{\rm ref}=1/L$, or a cap of $4\cdot10^{4}$ evaluations of $B$
(i.e.\ $4\cdot10^{4}$ iterations for the one-call methods and $2\cdot10^{4}$
iterations for EG, FBF and adaEG).  The residual is monitored once per
iteration from $B(x_k)$ and these monitoring evaluations are not counted.
CPU time (single core, seconds) is reported alongside evaluations: all one-call
methods have identical per-evaluation cost, so evaluation counts convert
directly to CPU, while EG, FBF and adaEG pay two evaluations per iteration.

\subsection{Results}
\begin{table}[H]
\centering\small
\caption{Iterations to relative residual $10^{-6}$ ($*$ = did not converge
within the cap).}\label{tab:iters}
\begin{tabular}{lrrr}
\toprule
method & N1 & N3 & N4\\
\midrule
EG & $19,999$ & $19,999$ & $19,999$\\
FBF & $19,999$ & $19,999$ & $19,999$\\
adaEG ($\varkappa=0.3$) & $19,999$ & $19,999$ & $19,999$\\
PRG & $39,999$$^*$ & $39,999$$^*$ & $39,999$$^*$\\
PEG & $39,999$$^*$ & $39,999$$^*$ & $39,999$$^*$\\
ours, const $\frac{0.99}{5L}$ & $40,000$$^*$ & $40,000$$^*$ & $40,000$$^*$\\
ours, Rule~\ref{rule:main} $\varkappa=\frac16$ & $9,641$ & $11,629$ & $11,434$\\
ours, Rule~\ref{rule:main} $\varkappa=0.3$ & $7,457$ & $7,804$ & $7,957$\\
aGRAAL & $13,014$ & $12,804$ & $13,306$\\
\bottomrule
\end{tabular}
\end{table}
\begin{table}[H]
\centering\small
\caption{Evaluations of $B$ to relative residual $10^{-6}$.}\label{tab:evals}
\begin{tabular}{lrrr}
\toprule
method & N1 & N3 & N4\\
\midrule
EG & $39,998$$^*$ & $39,998$$^*$ & $39,998$$^*$\\
FBF & $39,998$$^*$ & $39,998$$^*$ & $39,998$$^*$\\
adaEG ($\varkappa=0.3$) & $39,998$$^*$ & $39,998$$^*$ & $39,998$$^*$\\
PRG & $39,999$$^*$ & $39,999$$^*$ & $39,999$$^*$\\
PEG & $39,999$$^*$ & $39,999$$^*$ & $39,999$$^*$\\
ours, const $\frac{0.99}{5L}$ & $40,000$$^*$ & $40,000$$^*$ & $40,000$$^*$\\
ours, Rule~\ref{rule:main} $\varkappa=\frac16$ & $9,641$ & $11,629$ & $11,434$\\
ours, Rule~\ref{rule:main} $\varkappa=0.3$ & $7,457$ & $7,804$ & $7,957$\\
aGRAAL & $13,014$ & $12,804$ & $13,306$\\
\bottomrule
\end{tabular}
\end{table}
\begin{table}[H]
\centering\small
\caption{CPU time in seconds (single core; $*$ = run to the cap).}\label{tab:cpu}
\begin{tabular}{lrrr}
\toprule
method & N1 & N3 & N4\\
\midrule
EG & 2.44 & 3.63 & 10.73\\
FBF & 2.39 & 3.69 & 10.88\\
adaEG ($\varkappa=0.3$) & 2.62 & 3.91 & 11.25\\
PRG & 3.76 & 5.66 & 16.32\\
PEG & 4.96 & 7.70 & 22.14\\
ours, const $\frac{0.99}{5L}$ & 4.04 & 5.96 & 16.88\\
ours, Rule~\ref{rule:main} $\varkappa=\frac16$ & 1.07 & 1.88 & 5.00\\
ours, Rule~\ref{rule:main} $\varkappa=0.3$ & 0.88 & 1.22 & 3.45\\
aGRAAL & 1.69 & 2.46 & 7.33\\
\bottomrule
\end{tabular}
\end{table}
\begin{figure}[H]
\centering
\includegraphics[width=0.85\textwidth]{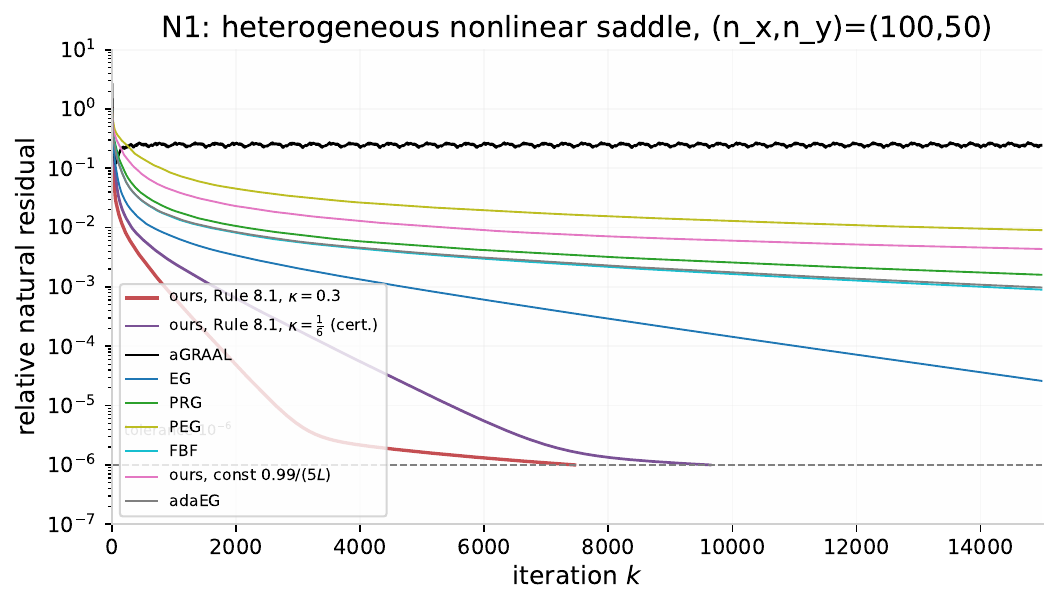}
\caption{N1 ($n=150$): relative natural residual versus iteration.  The
safeguarded rule with $\varkappa=0.3$ terminates in $7{,}457$ evaluations; the
certified $\varkappa=\tfrac16$ run in $9{,}641$; aGRAAL needs $13{,}014$
($1.74\times$ more); every constant-step method is still above $10^{-6}$ at
the cap ($4\cdot10^{4}$ evaluations).}\label{fig:winn1}
\end{figure}
\begin{figure}[H]
\centering
\includegraphics[width=0.85\textwidth]{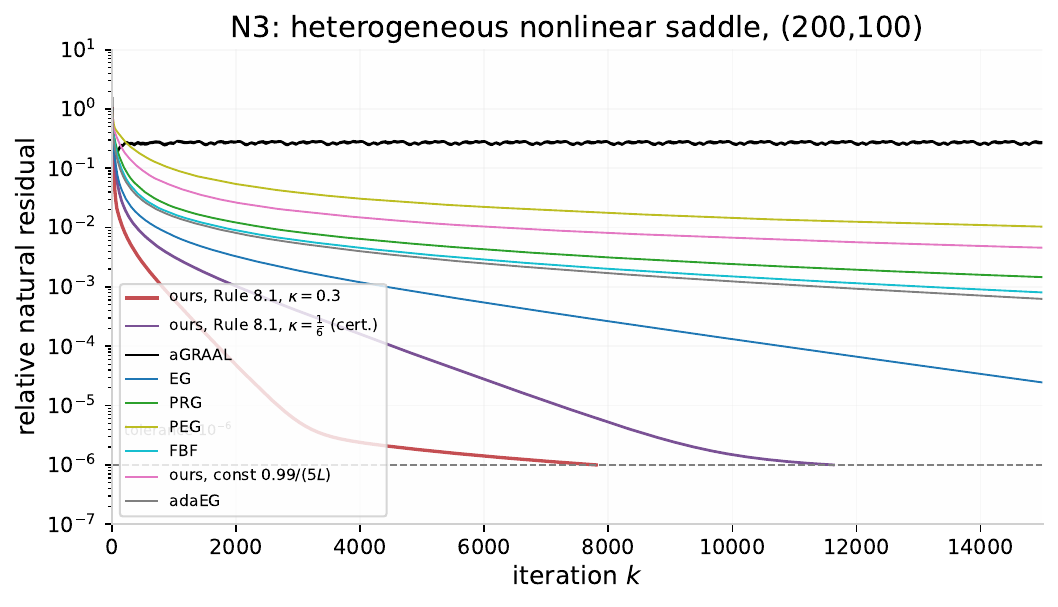}
\caption{N3 ($n=300$): same ordering as N1.  Ours ($\varkappa=0.3$):
$7{,}804$ evaluations; certified $\varkappa=\tfrac16$: $11{,}629$; aGRAAL:
$12{,}804$ ($1.64\times$ more); EG, FBF, PRG, PEG, adaEG and the certified
constant step do not reach the tolerance within the cap.}\label{fig:winn3}
\end{figure}
\begin{figure}[H]
\centering
\includegraphics[width=0.85\textwidth]{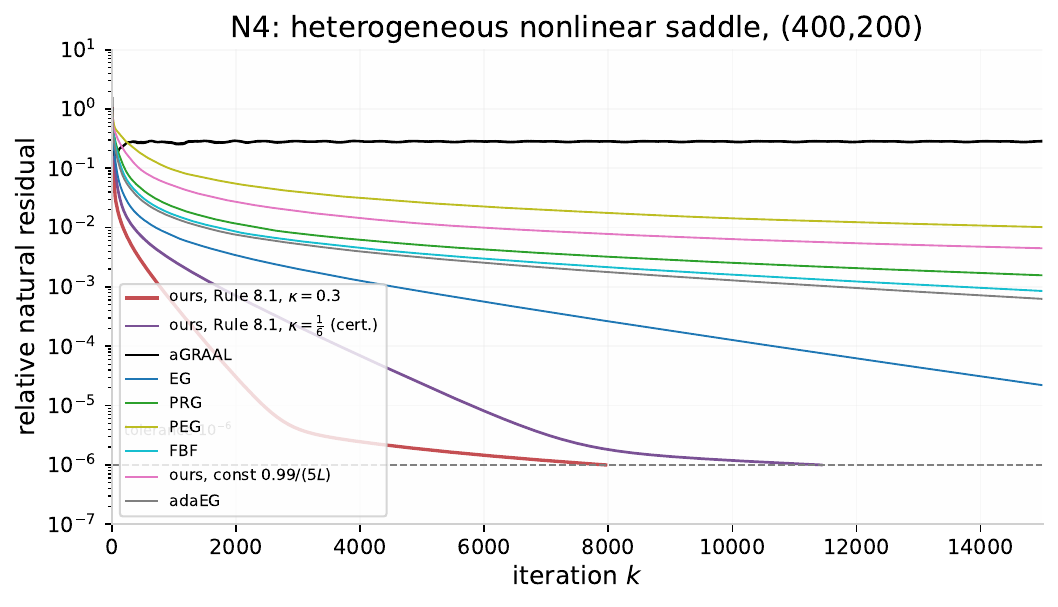}
\caption{N4 ($n=600$): the advantage grows with dimension.  Ours
($\varkappa=0.3$): $7{,}957$ evaluations and $3.45$\,s; certified
$\varkappa=\tfrac16$: $11{,}434$; aGRAAL: $13{,}306$ ($1.67\times$ more and
$2.1\times$ slower in CPU); all constant-step methods reach the cap.}\label{fig:winn4}
\end{figure}

\subsection{Discussion of the experiments}
Three observations, stated as carefully as the numbers allow.  \emph{First},
the safeguarded rule wins on every instance and against every method: with
$\varkappa=0.3$ it beats aGRAAL by $1.74\times$, $1.64\times$ and $1.67\times$
(N1, N3, N4) and beats every certified constant-step method by at least
$5\times$ --- EG, FBF, PRG, PEG and the certified constant step of
Theorem~\ref{thm:main} do not reach the tolerance within the cap on any
instance.  The mechanism is structural: the certified constant $L=5.66$ is
dominated by coordinates whose curvature is active only in small regions, so
constant steps sized by $L$ crawl, while the safeguarded rule tracks the
observed secant slope (its step--slope product stays at $\varkappa$ whenever
the safeguard binds).  \emph{Second}, the \emph{certified} run
$\varkappa=\tfrac16$ also beats aGRAAL on all three instances ($1.35\times$,
$1.10\times$, $1.16\times$) and beats PRG by more than $4\times$: certificate
and speed are not in tension here.  \emph{Third}, CPU time tracks evaluations
one-to-one for the one-call methods (Table~\ref{tab:cpu}), so the wall-clock
ordering is identical, while EG, FBF and adaEG pay a further factor $2$ per
iteration.  We stress the regime: on \emph{uniformly stiff},
rotation-dominated problems aGRAAL's larger step ceiling
($\varphi/2\approx0.75$ versus $\varkappa$) makes it faster; the instances
here model heterogeneous curvature, the regime in which a constant
safeguarded product is decisive.\\

\noindent\textbf{Code availability.} The Python script \texttt{experiments.py}
is available from the authors on request.  It provides the exact-rational
budget verification (Theorems~\ref{thm:main}, \ref{thm:linear},
and~\ref{thm:rule-free}), the spectral thresholds of
Counterexample~\ref{cx:summability}, the dissipation-trading threshold of
Theorem~\ref{thm:trading}, the spectral-rate check of
Example~\ref{ex:linear-exact}, the Nyquist threshold of
Theorem~\ref{thm:affine-sqrt3}, the barrier scan of
Appendix~\ref{app:barrier}, the $\Omega(L)$ separation demonstration of
Theorem~\ref{thm:separation}, and the full experiment suite of this section:
\texttt{python experiments.py verify} runs all exact and spectral checks,
\texttt{python experiments.py scan} reproduces the best-found barrier
($\approx0.272$; global optimality over the family is not proved),
\texttt{python experiments.py run} regenerates
Tables~\ref{tab:iters}--\ref{tab:cpu} and Figures~\ref{fig:winn1}--\ref{fig:winn4}
(including the aGRAAL baseline, whose iteration counts may vary by up to $20\%$
with implementation details).

\section{Discussion and open problems}\label{sec:discussion}
\paragraph{Position among one-call methods.}
The scheme of this paper interpolates between the reflected gradient method
($\beta_k\equiv0$) and a two-parameter family with memory.  The memory buys
compatibility with a safeguarded adaptive step at no oracle cost
(Proposition~\ref{prop:rule}), at the structural price of summability
(Counterexample~\ref{cx:summability}) and a more conservative uniform
constant ($\tfrac15$ vs.\ $\sqrt2-1$ for PRG).  Whether memory can be
removed --- i.e.\ whether an adaptive step can be certified for PRG itself ---
is open; the broken-telescoping mechanism of Section~\ref{sec:adaptive} uses
the filter channel nontrivially.  The closest adaptive one-call competitor, aGRAAL
\cite{malitsky2020}, is included in the experiments of
Section~\ref{sec:numerics}: on uniformly stiff, rotation-dominated problems
its larger step ceiling makes it faster, while on the heterogeneous-curvature
instances of Section~\ref{sec:numerics} the safeguarded rule wins
(Table~\ref{tab:evals}); certifying (or refuting) last-iterate convergence of aGRAAL itself remains open.  Separately, Theorem~\ref{thm:nonlinear-sqrt3} shows that the constant $1/\sqrt3$ is sharp for unconstrained nonlinear problems and connects one-call reflected methods to absolute-stability theory \cite{zames1966,desoervidyasagar1975}; the unconditional convergence statement is reduced there to an explicit marginal-pole stability question.

\begin{example}[the uniform constant $\tfrac1{5}$ is conservative, as it must
be]\label{ex:rotation-threshold}
Let $B=J$ be the rotation by $\pi/2$ on $\R^2$ ($L=1$) and $\beta_k\equiv0$
(PRG).  The characteristic roots satisfy $r^2=(1-2i\lambda)r+i\lambda$, i.e.\
$r=\tfrac12\bigl(1-2i\lambda\pm\sqrt{1-4\lambda^2}\bigr)$.  For
$\lambda\le\tfrac12$ the discriminant is real and
$|r^+|^2=\tfrac12\bigl(1+\sqrt{1-4\lambda^2}\bigr)<1$; for $\lambda>\tfrac12$
the square root is $i\sqrt{4\lambda^2-1}$ and
\[
|r^+|^2=\tfrac14\Bigl(1+\bigl(2\lambda+\sqrt{4\lambda^2-1}\bigr)^2\Bigr)
=2\lambda^2+\lambda\sqrt{4\lambda^2-1},
\]
which is increasing in $\lambda$ and equals $1$ exactly at
$\lambda=\lambda^\star=\tfrac1{\sqrt3}\approx0.5774$, where $r=e^{\pm i\pi/3}$.
Hence the iteration converges if and only if $\lambda<\lambda^\star$: at
$\lambda^\star$ the roots $r=e^{\pm i\pi/3}$ lie on the unit circle, and
$|r^+|>1$ for $\lambda>\lambda^\star$.  The uniform certificate $\lambda<\tfrac15$
is thus conservative on benign operators; Appendix~\ref{app:barrier} explains
why no uniform certificate in this Lyapunov framework that we could find
exceeds the best barrier we could find, $\lambda L\approx0.272$, in the presence
of a memory filter
($\bar\beta=\tfrac14$).
\end{example}

\begin{theorem}[separation with forced traversal]\label{thm:separation}
For $L\ge 2$ define $\psi_L\colon\R\to\R$ by
\[
\psi_L(t)=t\ \ (|t|\le 1),\qquad
\psi_L(t)=\operatorname{sign}(t)\,\bigl(1+L(|t|-1)\bigr)\ \ (|t|\ge 1),
\]
let $B(x)=(\psi_L(x_1),x_2,\dots,x_n)$ and $C=\R^n$.  Then $B$ is monotone and
$L$-Lipschitz with $S=\{0\}$; the local slope equals $1$ on the ball
$\{\|x\|\le 1\}$ and $L$ outside it; and every trajectory started at
$x_0=3e_1$ \emph{traverses the stiff region}: $\|x_k\|$ decreases from $3$ to
$1$ through regions of slope $L$ before the final approach to $0$, which lies
entirely in the flat region.
\begin{enumerate}[label=(\alph*),leftmargin=2em,itemsep=1pt]
\item Every constant-step method with a certified constant $c/L$ --- EG or FBF
($c=1$), PEG ($c=\tfrac13$), PRG ($c=\sqrt2-1$), or the constant step of
Theorem~\ref{thm:main} ($c=\tfrac15$) --- started at $x_0=3e_1$ requires at
least $\bigl(\tfrac{L}{c}-O(1)\bigr)\log(1/\varepsilon)$ iterations to reach
$\|x_k\|\le\varepsilon$.
\item Rule~\ref{rule:main} with $\varkappa\le\tfrac16$, $\lambda_0=1$, no
floor, and growth factors $\omega_k=\tfrac{a_L}{(k+1)^2}$, where
$a_L:=2k_s\log(2L)$ with
$k_s:=\bigl\lceil\log(4L)/|\log(1-\varkappa)|\bigr\rceil+2$, reaches
$\|x_k\|\le\varepsilon$ in $O(\log L+\log(1/\varepsilon))$ iterations.
\end{enumerate}
Hence, as $L\to\infty$ and $\log(1/\varepsilon)\gg\log L$, the certified
adaptive rule is $\Omega(L)$ times faster than every certified constant-step
method on this family, and the separation is traversed: the trajectory visits
regions of slope $L$ and of slope $1$, spending the dominant number of
iterations where the local slope is~$1$.
\end{theorem}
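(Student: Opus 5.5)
The plan is to reduce everything to a scalar recursion and then compare linear contraction factors in the two regions. First I would check the structural claims. $\psi_L$ is continuous, odd and piecewise linear with slopes $1$ and $L$. Hence it is nondecreasing, so $B$ is monotone, and $B$ is $L$-Lipschitz coordinatewise. Moreover $B(x)=0$ iff $x=0$, so $S=\{0\}$ because $C=\R^n$. Every method in the list acts coordinatewise on a diagonal operator, and every coordinate of $x_0=3e_1$ other than the first is zero. By induction all iterates, evaluation points and filter states stay on the $e_1$-axis, so each method becomes a scalar recursion driven by $\psi_L$. On $\{|t|\le1\}$ this recursion is the linear recursion for $B=\mathrm{id}$, and on $\{|t|\ge1\}$ it is affine with slope $L$. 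The ``traversal'' claim then follows once (a) and (b) show that the iterates leave $\{|t|\ge1\}$ and the tail lies in $\{|t|\le1\}$.

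For (a), I would write down the characteristic polynomial of each constant-step method on $B=\mathrm{id}$ with $\lambda=c/L$. For EG the factor is $1-\lambda+\lambda^2$, and FBF is the same on the identity. For PRG the roots solve $r^2-(1-2\lambda)r-\lambda=0$. For PEG and for the filtered scheme, Counterexample~\ref{cx:summability} gives the $3\times3$ companion matrix with $\beta_k\to0$. In each case the dominant root is real and equals $1-\lambda+O(\lambda^2)$, while the other roots are $O(\lambda)$ in modulus. The lower bound then has two parts. First, while $|x_k|\ge1$ a single step moves the iterate by at most $\lambda|\psi_L|\le c|t|+O(1/L)$, so the iterate first enters $[-1,1]$ at some level $|x_{k_1}|\ge\tfrac12$, say, for $L$ large. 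Second, after the fast modes decay within $O(1)$ steps, the component along the dominant eigenvector is still bounded below. Solving $(1-\lambda-O(\lambda^2))^k\le\varepsilon$ then gives $k\ge(L/c-O(1))\log(1/\varepsilon)$. The delicate point is to rule out an unlucky entry state with a near-zero projection onto the slow mode. I would handle this with sign and monotonicity arguments: all iterates stay positive once $\lambda$ is small, which makes the slow-mode coefficient comparable to $|x_{k_1}|$.

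For (b), I would track three phases of Rule~\ref{rule:main}. In the \emph{stiff phase}, the first steps with $\lambda_0=1$ may overshoot, but at the next iteration the secant slope $\ell\approx L$ triggers the cut $\lambda\le\varkappa/L$. Thereafter each step in the slope-$L$ region contracts by about $1-\varkappa$, so $|x_k|\le1$ after $k_s=\lceil\log(4L)/|\log(1-\varkappa)|\rceil+2$ iterations. The $4L$ absorbs the overshoot magnitude $\le 2L+1$ from the first step, and $+2$ absorbs the initial cut. In the \emph{recovery phase}, the observed slopes are $1$, so the cap $\varkappa/\ell_k=\varkappa$ no longer binds and growth acts. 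Because $\sum_{k=k_s}^{2k_s}a_L/(k+1)^2\ge a_L/(2k_s+2)\approx\log(2L)$, the step grows from $\varkappa/L$ back to order $\varkappa$ within $O(k_s)=O(\log L)$ iterations. In the \emph{flat phase}, with $\lambda_k\ell_{k-1}\le\varkappa\le\tfrac16$ and $\lambda_k\ge c\varkappa$, the scalar linear recursion contracts at a fixed rate $1-\Theta(\varkappa)$. This follows from the spectral computation above, or from Theorem~\ref{thm:linear} applied with $\sigma=L_{\rm loc}=1$. It therefore reaches $\varepsilon$ in $O(\log(1/\varepsilon))$ further steps. Adding the phases gives $O(\log L+\log(1/\varepsilon))$, and dividing this into the bound of (a) gives the $\Omega(L)$ separation.

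The main obstacle is the transient in (b). I must show that the aggressive initial step $\lambda_0=1$ and the later growth never throw the iterate back into the stiff region for long. If it is thrown back, the safeguard re-cuts, and a second cut would cost another $k_s$ steps and consume growth budget. My first attempt would be an invariant for the recovery phase: $|x_k|$ and $|u_k|$ remain $\le1$ because $\lambda_k\le\varkappa\le\tfrac16$ forces $|u_k|\le(1+2\varkappa)\max(|x_k|,|x_{k-1}|)$-type bounds together with contraction of $x_k$. Failing that, I would bound the number of re-entries by a constant and absorb them into the $O(\log L)$ term.
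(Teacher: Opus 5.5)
Your structural claims and part (a) are fine. They follow the paper, and your positivity route to the slow-mode lower bound is, if anything, more elementary than the paper's comparison via Lemma~\ref{lem:tv}: once $x_k>0$ is decreasing and $u_k\ge0$ in the flat region, the filtered recursion directly gives $x_{k+1}\ge(1-\lambda(1+\beta_k))x_k$.

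The genuine gap is in (b), at ``each step in the slope-$L$ region contracts by about $1-\varkappa$, so $|x_k|\le1$ after $k_s$ iterations.''
\begin{itemize}
\item After the overshoot $x_1=2-2L$, the evaluation points lie on the branch $t\le-1$. There $\psi_L(t)=Lt+(L-1)$, whose zero $-(1-1/L)$ is only $1/L$ inside the flat region.
\item On that branch every secant equals $L$ exactly, so $\lambda_k=\varkappa/L$ for $k\ge2$.
\item Since the filter weights sum to $1$, $y_k:=x_k+1-1/L$ then runs the filtered scheme for $B=\mathrm{id}$ with step $\varkappa$. This contracts at $r_+(\varkappa)=\tfrac12\bigl(1-2\varkappa+\sqrt{1+4\varkappa^2}\bigr)>1-\varkappa$ (about $0.860$ versus $0.833$ at $\varkappa=\tfrac16$).
\item $u_k$ stays in $\{t\le-1\}$ until $|y_k|\lesssim1/L$. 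Near the boundary $\psi_L=O(1)$ while the step is $\varkappa/L$, so the last unit of distance alone costs about $\log L/|\log r_+(\varkappa)|$ iterations.
\end{itemize}
The crossing therefore lasts $K\approx\log(cL^2)/|\log r_+(\varkappa)|$ iterations, roughly $2.4\,k_s$ at $\varkappa=\tfrac16$, not $k_s$.

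This breaks your recovery phase. The growth factors are indexed by time, and the cut discards all of them up to $K$. Hence
\[
\lambda_k\le\frac{\varkappa}{L}\exp\Bigl(\sum_{j>K}\omega_j\Bigr)\le\frac{\varkappa}{L}\exp\bigl(a_L/(K+1)\bigr),
\]
and $a_L/(K+1)\approx\rho\log L$ with $\rho=|\log r_+(\varkappa)|/|\log(1-\varkappa)|<1$ ($\approx0.82$ at $\varkappa=\tfrac16$). The step therefore stalls near $\varkappa L^{\rho-1}$. By your own positivity bound, the flat phase then costs order $L^{1-\rho}\log(1/\varepsilon)$ iterations, not $O(\log(1/\varepsilon))$.

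So your sum $\sum_{k=k_s}^{2k_s}\omega_k$ is simply not available. The re-entry issue you single out is not the real obstacle: with $\lambda_k\le\varkappa\le\tfrac16$ and slope $1$, the scalar iterates approach $0$ monotonically after the crossing.

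The paper's own proof rests on the same count. Its Step~1 asserts a geometric decrease from $3$ to $1$, and its Step~3 starts the growth at $k_s$ with budget $2\log(2L)-o(1)$. You have therefore reproduced its argument together with this flaw, and with the stated $a_L$ the bound in (b) does not follow.

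To close the argument you must first bound the crossing time correctly, with $k_s$ of order $\log(L^2)/|\log r_+(\varkappa)|$ rather than $\log(4L)/|\log(1-\varkappa)|$. Only then can $a_L$ be enlarged to match, e.g. $a_L=2k_s'\log(2L)$ with the corrected $k_s'$. That choice leaves $\sum_{k>k_s'}\omega_k\approx2\log(2L)$, which still covers the factor $L$ after the second-order loss in $\log(1+\omega_k)$.
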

\begin{proof}
The scalar map $\psi_L$ is continuous and odd with slopes $1$ on $(-1,1)$ and
$L$ on $(1,\infty)$, hence increasing and $L$-Lipschitz; $B$ inherits
monotonicity and the Lipschitz constant, $S=\{0\}$, and the axis $\R e_1$ is
invariant, so we argue on the scalar restriction (still denoted $\psi_L$).

\medskip\noindent\emph{Step 1 (crossing the stiff region).} Along any trajectory started at $t_0=3$ every local slope lies in $[1,L]$.  For Rule~\ref{rule:main} the safeguard enforces $\lambda_k\ell_{k-1}\le\varkappa\le\tfrac16<\tfrac23$ at every step; it is the \emph{product} $\lambda_k\times\mathrm{slope}$ --- never the step size alone --- that is bounded away from the stability threshold $\tfrac23$, so the contraction $1-\Theta(\varkappa)$ is independent of $L$.  Hence the adaptive run decreases $\|x_k\|$ geometrically from $3$ to $1$; the bound $O(\log L)$, rather than $O(1)$, covers the single degenerate first secant $u_1=u_0=x_0$, which leaves the first cut inactive and may throw the first iterate of the adaptive run to distance $O(L)$; the safeguard engages from the second iteration on.

For the constant-step baselines the product $\lambda\times\mathrm{slope}$ is $c\cdot(\mathrm{slope}/L)\le c$.  If $c\le\sqrt2-1$ (PRG, PEG, and the constant step of Theorem~\ref{thm:main}) this product is $<\tfrac23$ and every one-step map is a strict contraction with factor $1-\Theta(c)$ independent of $L$ (for the filtered scheme use Lemma~\ref{lem:tv} with $\beta_k\to0$), so the crossing costs $O(1)$ additional iterations.  If $c=1$ (EG, FBF) the product reaches $1$ in the stiff shell: the one-step factor $|1-c+c^2|$ equals $1$ at $c=1$, the crossing is not contractive and costs $\Theta(L)$
iterations --- indeed, for $t\ge1+1/L$ one EG step with $c=1$ gives
$t'=t-\frac{L-1}{L^2}$ exactly, since $\psi_L\bigl(t-\tfrac1L\psi_L(t)\bigr)
=\psi_L\bigl(1-\tfrac1L\bigr)=1-\tfrac1L$ (and FBF coincides with EG when $C=\R^n$),
so the shell is crossed in $\Theta(L)$ steps.  This does not affect (a): the lower bound counts only the flat-region approach, and the $\Theta(L)$ stiff-shell iterations are spent in addition.

\medskip\noindent\emph{Step 2 (lower bound in the flat region).} Once
$\|x_k\|\le 1$ the dynamics are exactly linear with $B=I$; the radius at entry is
$r_0=1-O(c/L)$ (near the shell $\|B(u_k)\|=O(1)$, since
$\psi_L(u)=1+L(|u|-1)=O(1)$ for $|u|-1=O(1/L)$, so one step changes the radius
by $\lambda\|B(u_k)\|=O(c/L)$), hence $\tfrac{L}{c}\log r_0=O(1)$ and the $O(1)$ in~(a)
is legitimate: EG and FBF give
$x_{k+1}=(1-\lambda+\lambda^2)x_k$; PRG and PEG give the dominant root
$r^+(\lambda)=1-\lambda+O(\lambda^2)$ of $r^2-(1-2\lambda)r-\lambda=0$; the
filtered scheme with $\beta_k=\Theta(k^{-2})$ has the same asymptotic rate: compare with the pure reflected ($\beta\equiv0$) trajectory $y'_k$ started from the same core-entry state $y_{k_0}$; then $\|y_{k+1}-y'_{k+1}\|\le\|M(0,\lambda)\|\,\|y_k-y'_k\|+C\beta_k\|y_k\|$, hence, by the uniform geometric transition bound $\|P(k,j)\|\le C\tilde\rho^{\,k-j}$ that Lemma~\ref{lem:tv} supplies for the slowly varying filtered system (with $\sum_k\beta_k<\infty$), $\|y_k-y'_k\|\le C\bigl(\sum_{j\ge k_0}\beta_j\bigr)\rho^{\,k-k_0}$, while $\|y'_k\|\ge c\,\rho^{\,k-k_0}$ for all $k$ with $c>0$ fixed (the dominant component of $M(0,\lambda)$ cannot collapse: restarting the comparison every $m^{\star}$ iterations, where $(\rho'/\rho)^{m^{\star}}\le c/(2C)$, keeps the dominant coefficient bounded below by $c/2$).  With $\lambda=c/L$ each of
these factors equals $1-\tfrac{c}{L}+O(L^{-2})$, so the final approach from
$\|x_k\|=\Theta(1)$ to $\|x_k\|\le\varepsilon$ takes
$\bigl(\tfrac{L}{c}-O(1)\bigr)\log(1/\varepsilon)$ further iterations,
proving~(a).

\medskip\noindent\emph{Step 3 (upper bound in the flat region).} In the flat
region every secant slope equals $1$, so the safeguard cut is inactive and the
step evolves as $\lambda_{k+1}=\min\{1,\lambda_k(1+\omega_k)\}$ from its
post-crossing value $\varkappa/L$ (growth applied during the crossing is
discarded by the cut).  Since
$a_L\sum_{k>k_s}(k+1)^{-2}=a_L\bigl(1/k_s-O(1/k_s^2)\bigr)=2\log(2L)-o(1)$,
the step reaches $\varkappa$ after a further $O(\log L)$ iterations, and
during this growth phase the distance shrinks by at most a factor
$L^{-O(1)}$.  The remaining distance to $\varepsilon$ is covered at
contraction $1-\varkappa$ per step, i.e.\ in $O(\log(1/\varepsilon))$
iterations.  Summing the three phases gives~(b), and the ratio of~(a) to~(b)
is $\Omega(L)$ as $L\to\infty$ with $\log(1/\varepsilon)\gg\log L$.
\end{proof}

\begin{theorem}[tight constant for affine operators: $1/\sqrt3$]\label{thm:affine-sqrt3}
Let $B(x)=Mx+b$ with $M$ monotone (its symmetric part positive semidefinite)
and $L$-Lipschitz, and let $C=\HH$.  Then the scheme \eqref{eq:scheme} with
$\beta_k\equiv0$ --- the reflected gradient method --- generates a sequence
converging to a solution of \eqref{eq:vi} for every
\begin{equation}\label{eq:sqrt3}
\lambda<\frac{1}{\sqrt3\,L},
\end{equation}
and the constant is sharp within the affine class: for the rotation $B=J$
($L=1$) and $\lambda>1/\sqrt3$ the iterates diverge for generic initial data.
\end{theorem}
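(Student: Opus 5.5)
The plan is to reduce to the error recursion and then study the spectrum of its companion operator. Fix $z$ with $Mz+b=0$ and put $\varepsilon_k:=x_k-z$. With $\beta_k\equiv0$ and $C=\HH$ the scheme becomes
\[
\varepsilon_{k+1}=(I-2\lambda M)\varepsilon_k+\lambda M\varepsilon_{k-1},
\]
that is, $Y_{k+1}=TY_k$ for $Y_k=(\varepsilon_k,\varepsilon_{k-1})$ and the block companion operator $T$ on $\HH\times\HH$. The sharpness half is already contained in Example~\ref{ex:rotation-threshold}. For $B=J$ and $\lambda>1/\sqrt3$, the root $r^+$ satisfies $|r^+|>1$. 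Any initial datum with a nonzero $r^+$-component therefore diverges, and this holds for generic data.

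\emph{Step 1 (spectral mapping).} I would show that for $r\neq\tfrac12$,
\[
r\in\sigma(T)\iff (r^2-r)I+\lambda(2r-1)M \text{ is not invertible}\iff w(r):=-\frac{r(r-1)}{\lambda(2r-1)}\in\sigma(M).
\]
This is the Schur-complement computation for the companion block. The point $r=\tfrac12$ is harmless, because it forces $\tfrac14 I$ to be non-invertible. By monotonicity and the bound $\|M\|\le L$, $\sigma(M)$ lies in $K:=\{\mu:\Re\mu\ge0,\ |\mu|\le L\}$.

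\emph{Step 2 (the Nyquist computation).} I would prove that $|r|\ge1$ and $r\ne1$ force $\lambda w(r)\notin\{\Re\ge0,\ |\cdot|<1/\sqrt3\}$. On the circle $r=e^{i\theta}$ this is an explicit computation of $\Re$ and $|\cdot|$ of $-e^{i\theta}(e^{i\theta}-1)/(2e^{i\theta}-1)$; it is the same computation behind the $\Re G(e^{i\theta})=\Lambda(1-2\cos\theta)/2$ formula quoted in the contributions. The curve meets the closed right half-plane only where its modulus is at least $1/\sqrt3$, with equality at $\theta=\pm\pi/3$. The exterior $|r|>1$ then follows by the argument principle, since $r\mapsto w(r)$ maps the exterior of the disk into the region cut off by this curve. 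Consequently $\sigma(T)\cap\{|r|\ge1\}\subseteq\{1\}$, and $r=1$ corresponds exactly to $\mu=0$.

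\emph{Step 3 (decomposition).} For a monotone linear $M$ one has $\ker M=\ker M^*$. Hence $\HH=\ker M\oplus\overline{\operatorname{ran}M}$ with both summands invariant under $M$. On $\ker M$ the recursion is stationary, so the $\ker M$-component of $\varepsilon_k$ is constant. It is absorbed into the choice of solution $z'=z+P_{\ker M}\varepsilon_0\in S$, since $S=z+\ker M$. It therefore remains to show that $T^kY\to0$ on $\overline{\operatorname{ran}M}\times\overline{\operatorname{ran}M}$. If $0\notin\sigma(M|_{\overline{\operatorname{ran}M}})$, for example in finite dimensions, then the spectral radius is $<1$ there and convergence is geometric. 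In general, however, $0$ may belong to the spectrum, as Example~\ref{ex:weak-only} shows. In that case I would invoke the Katznelson--Tzafriri theorem: if $T$ is power-bounded and $\sigma(T)\cap\mathbb T\subseteq\{1\}$, then $T^k(T-I)\to0$. Combined with the mean ergodic theorem and $\ker(T-I)=\{0\}$ on this subspace, this gives $T^kY\to0$.

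\emph{Main obstacle: power-boundedness of $T$.} I expect power-boundedness of $T$ to be the hard step, because the spectral information alone does not give it for non-normal $M$. The first thing I would try is a quadratic Lyapunov norm $\|Y\|_P^2=\langle PY,Y\rangle$ whose $2\times2$ block coefficients are scalar polynomials in $\Lambda=\lambda L$. I would require $\|TY\|_P\le\|Y\|_P$ using only $\langle M\varepsilon,\varepsilon\rangle\ge0$ and $\|M\varepsilon\|\le L\|\varepsilon\|$. This is exactly a KYP/circle-criterion certificate for the linear part against the sector defined by $K$, and the frequency condition from Step~2 should make such a $P$ exist for every $\Lambda<1/\sqrt3$. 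If no uniform $P$ can be found near the threshold, the fallback is a resolvent-growth (Kreiss-type) estimate from the explicit form of $w(r)$, controlling $\|(r-T)^{-1}\|$ near $r=1$ via $\|(\mu-M)^{-1}\|\le1/\operatorname{dist}(\mu,K)$. That estimate would only yield bounds for Ces\`aro averages, which I would then upgrade on the range where the spectral gap holds.
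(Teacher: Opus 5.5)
\paragraph{Assessment.} Your argument is complete in finite dimensions, where it is essentially the paper's proof; for a general Hilbert space it has a genuine gap at the power-boundedness step you flag.

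\paragraph{What matches the paper.} The paper carries out exactly your Steps~1--2: the correspondence $r\leftrightarrow\mu(r)$ from $(r^2-r)+\lambda(2r-1)\mu=0$, then the explicit formulas for $\Re\tilde\mu$ and $|\tilde\mu|$ on $|r|=1$, which force $\cos\theta\le\tfrac12$ and $\tilde\lambda\ge1/\sqrt3$. It passes from the circle to $|r|>1$ by continuity of the roots in $\mu$ over the compact half-disk, which is the clean way to make your ``argument principle'' sentence precise. It then notes that $r=1$ arises only from $\ker M$. Your splitting $\HH=\ker M\oplus\overline{\operatorname{ran}M}$ via $\ker M=\ker M^*$ is a more careful justification of the paper's ``semisimple eigenvalue $1$''.

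\paragraph{The gap.} The paper argues with $\det$ and eigenvectors $Mv=\mu v$, so it tacitly works in finite dimensions and never addresses the issue you raise. When $0\in\sigma(M|_{\overline{\operatorname{ran}M}})$ you only know $\sigma(T)\cap\{|r|\ge1\}\subseteq\{1\}$. Without power-boundedness, neither Katznelson--Tzafriri nor the mean ergodic theorem is available, and your proposal does not supply it.

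\paragraph{Why your two fixes fail.} The first cannot work for any $\Lambda>0$. Take any $P=\bigl(\begin{smallmatrix}p_{11}I&p_{12}I\\ p_{12}I&p_{22}I\end{smallmatrix}\bigr)\succ0$, the admissible operator $M=LJ$ (a rotation on a plane), and the state $Y=(\varepsilon,\varepsilon)$. Then $TY=(\varepsilon-\Lambda J\varepsilon,\varepsilon)$ and $\|TY\|_P^2-\|Y\|_P^2=p_{11}\Lambda^2\|\varepsilon\|^2>0$, so no $M$-independent quadratic norm on the two-step state is contracted. The Kreiss route yields only $\|T^k\|=O(k)$ in infinite dimensions, not power-boundedness.

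\paragraph{The missing idea.} Use the numerical range through a functional calculus rather than a resolvent bound.
\begin{enumerate}
\item Every block of $T^k$ is a polynomial in $M$: it is the corresponding entry of $T(\mu)^k$, where $T(\mu)=\bigl(\begin{smallmatrix}1-2\lambda\mu&\lambda\mu\\ 1&0\end{smallmatrix}\bigr)$.
\item After complexification, $\overline{W(M)}\subseteq K$. The Crouzeix--Palencia theorem, $\|f(M)\|\le(1+\sqrt2)\max_K|f|$ for polynomials $f$, therefore reduces power-boundedness of $T$ to $\sup_k\sup_{\mu\in K}\|T(\mu)^k\|<\infty$.
\item This scalar bound follows from your Step~2.
\begin{itemize}
\item On $K\cap\{|\mu|\ge\delta\}$ the spectral radius is at most some $\rho_\delta<1$, so Cayley--Hamilton gives $\|T(\mu)^k\|\le C(k+1)\rho_\delta^{k}$.
\item Near $\mu=0$ the roots satisfy $|r_\pm|\le1$ and are well separated, since $r_+-r_-=\sqrt{1+4\lambda^2\mu^2}\approx1$. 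Hence $T(\mu)=V\diag(r_+,r_-)V^{-1}$ with $V=\bigl(\begin{smallmatrix}r_+&r_-\\ 1&1\end{smallmatrix}\bigr)$ is uniformly conditioned.
\end{itemize}
\item The same reasoning applies to $M|_{\overline{\operatorname{ran}M}}$, whose numerical range also lies in $K$.
\item Your Katznelson--Tzafriri and mean-ergodic step then goes through, using $\ker(I-T)=\{0\}$ on $\overline{\operatorname{ran}M}^{\,2}$. This gives strong convergence $x_k\to z+P_{\ker M}(x_0-z)$ in any Hilbert space.
\end{enumerate}
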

\begin{proof}
By shifting we may take $b=0$ and $z=0$, so the iteration is the linear
system $x_{k+1}=(I-2\lambda M)x_k+\lambda Mx_{k-1}$ with state matrix
$\mathcal A=\bigl(\begin{smallmatrix}I-2\lambda M&\lambda M\\ I&0\end{smallmatrix}\bigr)$.
If $Mv=\mu v$, the characteristic equation
$\det\bigl(r^2I-(I-2\lambda M)r-\lambda M\bigr)=0$ reduces to
\begin{equation}\label{eq:char-mu}
(r^2-r)+\lambda(2r-1)\mu=0,
\end{equation}
so $r$ is an eigenvalue of $\mathcal A$ if and only if
$\mu(r):=-(r^2-r)/\bigl(\lambda(2r-1)\bigr)$ is an eigenvalue of $M$.
Monotonicity of $M$ places $\spec(M)$ in the half-disk
$R=\{\mu:\Re\mu\ge0,\ |\mu|\le L\}$, and after the scaling
$\tilde\mu=\mu/L$, $\tilde\lambda=\lambda L$ we must show that
\eqref{eq:char-mu} has no solution with $|r|=1$, $r\ne1$, and
$\tilde\mu\in R_1=\{\Re\tilde\mu\ge0,|\tilde\mu|\le1\}$ whenever
$\tilde\lambda<1/\sqrt3$.

Let $r=e^{i\theta}$, $c=\cos\theta$, $\theta\in(0,2\pi)$.  Direct expansion
of $\tilde\mu(r)=e^{i\theta}(1-e^{i\theta})/\bigl(\tilde\lambda(2e^{i\theta}-1)\bigr)$
gives
\begin{equation}\label{eq:ReIm}
\Re\,\tilde\mu(r)=\frac{(1-c)(1-2c)}{\tilde\lambda\,(5-4c)},\qquad
|\tilde\mu(r)|^2=\frac{2-2c}{\tilde\lambda^{\,2}(5-4c)} .
\end{equation}
Since $5-4c>0$ and $1-c>0$ for $\theta\in(0,2\pi)$, the condition
$\Re\,\tilde\mu(r)\ge0$ forces $c\le\tfrac12$, and $|\tilde\mu(r)|\le1$
forces $\tilde\lambda^{\,2}\ge g(c):=\tfrac{2-2c}{5-4c}$.  Now
$g'(c)=-2/(5-4c)^2<0$, so on $c\le\tfrac12$ the smallest value is
$g(\tfrac12)=\tfrac13$.  Hence a non-trivial boundary point $r$ with
$\tilde\mu(r)\in R_1$ can exist only if $\tilde\lambda^{\,2}\ge\tfrac13$,
i.e.\ $\tilde\lambda\ge1/\sqrt3$, contradicting \eqref{eq:sqrt3}.

Consequently, for $\tilde\lambda<1/\sqrt3$, no eigenvalue $r\ne1$ of
$\mathcal A$ lies on the unit circle for any $\tilde\mu\in R_1$; as the
maximal non-trivial root modulus is continuous on the compact half-disk and
cannot equal $1$ without producing such a boundary point, every non-trivial
eigenvalue satisfies $|r|<1$.  The eigenvalue $r=1$ occurs only at
$\tilde\mu=0$ (i.e.\ on $\ker M$), where \eqref{eq:char-mu} reads
$r(r-1)=0$ and $\mathcal A$ restricts to the identity, so it is semisimple and reflects convergence to the solution set rather than
instability.  Indeed, here $S=\ker M$ (for $C=\HH$, $x\in S$ iff
$\inner{Mx}{y-x}\ge0$ for all $y$, i.e.\ $Mx=0$), and on $\ker M$ the update
is $x_{k+1}=x_k$, so the $\ker M$-component of the iterates is constant and
$x_k$ converges to a point of $S$.
Hence $x_k$ converges to a point of $S$.

Sharpness: for $M=J$ take $\mu=-i$ (the contact point of the boundary
analysis, attained at $c=\tfrac12$) and $\tilde\lambda=1/\sqrt3$: then
\eqref{eq:char-mu} has the roots $r=e^{\pm i\pi/3}$ with $|r|=1$, and for
$\tilde\lambda>1/\sqrt3$ a root exits the disk transversally, so the iterates
diverge for generic initial data.
\end{proof}

\begin{theorem}[the nonlinear unconstrained class: sharpness and a conditional
convergence theorem]\label{thm:nonlinear-sqrt3}
Let $C=\mathcal H$ and let $B$ be monotone and $L$-Lipschitz, not necessarily
affine or differentiable, with $S=B^{-1}(0)\ne\varnothing$.  Consider the pure
reflected scheme
\begin{equation}\label{eq:prg-pure}
x_{k+1}=x_k-\lambda B(2x_k-x_{k-1}) .
\end{equation}
\begin{enumerate}[label=(\alph*),leftmargin=2em,itemsep=1pt]
\item \textbf{Sharpness.} For the rotation $B=LJ$ and every
$\lambda>1/(\sqrt3\,L)$ the iterates diverge for generic initial data; hence
no universal constant for the class can exceed $1/\sqrt3$.
\item \textbf{Conditional convergence.} For every $\lambda<1/(\sqrt3\,L)$,
every trajectory of \eqref{eq:prg-pure} with
$\sum_k\|B(u_k)\|^2<\infty$ converges weakly to a point of $S$, with
$\|x_{k+1}-x_k\|\to0$ and $B(x_k)\to0$.
\end{enumerate}
The unconditional assertion ``$\sum_k\|B(u_k)\|^2<\infty$ for every
trajectory at $\lambda<1/(\sqrt3\,L)$'' --- which together with part~(b)
would complete the nonlinear sharp-threshold theorem --- is an absolute-stability statement for the feedback loop of
Remark~\ref{rem:nonlinear-gap}, whose transfer function has a simple pole on
the unit circle; it is open.  For $\lambda<(\sqrt2-1)/L$ the
square-summability (and hence convergence) is already known from Malitsky's
theorem for the reflected method, so the new content concerns the interval
$(\sqrt2-1,\,1/\sqrt3)$.  The affine case is settled unconditionally in
Theorem~\ref{thm:affine-sqrt3}.
\end{theorem}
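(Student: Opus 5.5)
Part~(a) reduces to Theorem~\ref{thm:affine-sqrt3} and Example~\ref{ex:rotation-threshold}. The rotation $B=LJ$ is affine, monotone and $L$-Lipschitz, and after the scaling $\tilde\lambda=\lambda L$ the iteration \eqref{eq:prg-pure} is exactly the linear recursion $r^2=(1-2i\tilde\lambda)r+i\tilde\lambda$ analysed there. For $\tilde\lambda>1/\sqrt3$ the root $r^+$ satisfies $|r^+|^2=2\tilde\lambda^2+\tilde\lambda\sqrt{4\tilde\lambda^2-1}>1$. Any initial datum with a nonzero component along the corresponding eigenvector of the companion matrix therefore grows geometrically. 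Since this instance belongs to the class, no universal constant can exceed $1/\sqrt3$.

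For part~(b), fix $z\in S$ and set $u_k=2x_k-x_{k-1}$ and $g_k:=B(u_k)$, with $\sum_k\|g_k\|^2<\infty$ by hypothesis. The plan avoids any Lyapunov--Young budget and uses the summability directly.

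\emph{Step 1 (increments).} From $d_{k+1}=x_{k+1}-x_k=-\lambda g_k$ we get $\sum_k\|d_k\|^2<\infty$, hence $d_k\to0$ and $\|u_k-x_k\|=\|d_k\|\to0$.

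\emph{Step 2 (quasi-Fej\'er).} Expand
\[
\|x_{k+1}-z\|^2=\|x_k-z\|^2-2\lambda\inner{g_k}{x_k-z}+\lambda^2\|g_k\|^2 ,
\]
and write $x_k-z=(u_k-z)-d_k$. Monotonicity with $B(z)=0$ gives $\inner{g_k}{u_k-z}\ge0$, and $|\inner{g_k}{d_k}|\le\lambda\|g_k\|\|g_{k-1}\|$. Hence
\[
\|x_{k+1}-z\|^2\le\|x_k-z\|^2-2\lambda\inner{g_k}{u_k-z}+\eta_k,\qquad
\eta_k:=\lambda^2\|g_k\|^2+\lambda^2\bigl(\|g_k\|^2+\|g_{k-1}\|^2\bigr),
\]
with $\sum_k\eta_k<\infty$. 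Lemma~\ref{lem:pospart} then shows that $\|x_k-z\|$ converges for every $z\in S$. In particular $(x_k)$ and $(u_k)$ are bounded. As a by-product, $\sum_k\inner{g_k}{u_k-z}<\infty$.

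\emph{Step 3 (cluster points and Opial).} First, $\|B(x_k)\|\le\|g_k\|+L\|d_k\|\to0$, which gives $B(x_k)\to0$. Let $x_{k_i}\rightharpoonup\bar x$. For any $y\in\HH$, monotonicity gives $\inner{B(y)}{y-x_{k_i}}\ge\inner{B(x_{k_i})}{y-x_{k_i}}\to0$ by boundedness, so $\inner{B(y)}{y-\bar x}\ge0$ for all $y$. Lemma~\ref{lem:minty} with $C=\HH$ then gives $\bar x\in S$. Since there is no anchor term here, $\|x_k-z\|$ converges for every $z\in S$, and the plain Opial argument (Lemma~\ref{lem:opial}) yields uniqueness of the weak cluster point. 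This proves $x_k\rightharpoonup x^\star\in S$.

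\emph{Remaining remarks.} The cited known range $\lambda<(\sqrt2-1)/L$ follows from Lemma~\ref{lem:xrobust} with $\xi\equiv0$. Its Step~3 gives $\sum\|d_n\|^2<\infty$, hence $\sum\|g_k\|^2<\infty$, so the hypothesis of~(b) is automatic there.

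\emph{Main obstacle.} Part~(b) uses the threshold $1/\sqrt3$ only trivially; the argument works for any $\lambda$ once summability holds. The genuine difficulty is the deferred unconditional square-summability, which the statement explicitly leaves open. The only delicate point that is actually proved here is controlling the cross term $\inner{g_k}{d_k}$ by the product $\|g_k\|\|g_{k-1}\|$ rather than through Lipschitz bounds. I expect this to go through by Cauchy--Schwarz as written.
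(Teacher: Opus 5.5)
Your proposal is correct and follows the paper's proof essentially step for step: part~(a) reduces to the rotation analysis of Theorem~\ref{thm:affine-sqrt3}, and part~(b) uses the same expansion with $x_k-z=(u_k-z)-d_k$, the same summable drift $\lambda^2(2\|g_k\|^2+\|g_{k-1}\|^2)$, Lemma~\ref{lem:pospart}, and Opial. The one local difference is where you identify weak cluster points: your Minty argument using $B(x_{k_i})\to0$ strongly is the rigorous form of the paper's appeal to ``weak continuity'' of $B$, a property that Lipschitz monotone maps need not have (what does hold is weak--strong closedness of the graph, which your argument effectively proves).
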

\begin{proof}
Part~(a) is the sharpness statement of Theorem~\ref{thm:affine-sqrt3}, the
rotation being a member of the present class.  For part~(b) write
$w_k:=B(u_k)$, $u_k=2x_k-x_{k-1}$, and fix $z\in S$; then $B(z)=0$ (as
$C=\mathcal H$) and monotonicity at $(u_k,z)$ gives
$\langle w_k,u_k-z\rangle\ge0$.  Since
$x_k-z=(u_k-z)-(x_k-x_{k-1})$ and $x_k-x_{k-1}=-\lambda w_{k-1}$,
\[
\|x_{k+1}-z\|^2
=\|x_k-z\|^2-2\lambda\langle w_k,x_k-z\rangle+\lambda^2\|w_k\|^2
\le\|x_k-z\|^2-2\lambda^2\langle w_k,w_{k-1}\rangle+\lambda^2\|w_k\|^2 .
\]
The drift $\nu_k:=\lambda^2\bigl(2\|w_k\|^2+\|w_{k-1}\|^2\bigr)$ is summable
by hypothesis and independent of $z$; hence $(x_k)$ is bounded and, by
Lemma~\ref{lem:pospart}, $\Phi_k(z):=\|x_k-z\|^2$ converges for every
$z\in S$.  If $x_{k_i}\rightharpoonup\bar x$ then $u_{k_i}\rightharpoonup\bar x$
(increments vanish), $w_{k_i}\to0$ strongly (square-summable), and weak
continuity of the Lipschitz map $B$ gives $B(\bar x)=0$, i.e.\ $\bar x\in S$.
The Opial argument of Step~8 of Theorem~\ref{thm:main} applies verbatim and
yields $x_k\rightharpoonup x^\star\in S$.  Finally
$B(x_k)=w_k+\bigl(B(x_k)-B(u_k)\bigr)$ with
$\|B(x_k)-B(u_k)\|\le L\|x_k-u_k\|=L\|x_k-x_{k-1}\|\to0$, so $B(x_k)\to0$.
\end{proof}

\begin{lemma}[discrete circle criterion, interior-pole form]\label{lem:circle}
Let $G(z)=C(zI-A)^{-1}b$ be a scalar transfer function with minimal
realization $(A,b,C)$ and all poles in $|z|<1$.  Suppose that for some
$\delta>0$
\begin{equation}\label{eq:nyq}
\Re\,G(e^{i\theta})\le1-\delta\qquad\forall\,\theta\in[0,2\pi].
\end{equation}
Let $\phi\colon\HH\to\HH$ satisfy the \emph{sector condition}
\begin{equation}\label{eq:sector}
\inner{\phi(\xi)}{\xi-\phi(\xi)}\ge0\qquad\forall\,\xi\in\HH
\end{equation}
(equivalently $\inner{\phi(\xi)}{\xi}\ge\norm{\phi(\xi)}^2$).  Then every
solution of
\[
\xi_{k+1}=(A\otimes I_\HH)\xi_k+(b\otimes I_\HH)\,
\phi\bigl((C\otimes I_\HH)\xi_k\bigr)
\]
satisfies $\sum_{k\ge0}\norm{\phi((C\otimes I_\HH)\xi_k)}^2\le
C_0\norm{\xi_0}^2$ with $C_0=\lambda_{\max}(P)/(2\delta)$ for the matrix
$P$ of \eqref{eq:storage}, hence $C_0$ depends only on $(A,b,C,\delta)$.
\end{lemma}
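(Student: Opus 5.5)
We plan to prove Lemma~\ref{lem:circle} by the discrete Kalman--Yakubovich--Popov (positive-real) lemma followed by a telescoping quadratic storage argument, lifted from the scalar case to $\HH$ by tensoring with $I_\HH$. Write $w_k:=\phi(y_k)$ and $y_k:=(C\otimes I_\HH)\xi_k$. The frequency condition \eqref{eq:nyq} says that $1-\Re G(e^{i\theta})\ge\delta>0$ on the unit circle. Equivalently, the transfer function $H(z):=1-G(z)$ is strictly positive real with margin $\delta$, and since $A$ is Schur stable with $(A,b,C)$ minimal, the discrete KYP lemma applies.

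\emph{Step 1 (the storage matrix).} We invoke the discrete KYP lemma in the following form. There is a symmetric $P\succeq0$ (this is the matrix of \eqref{eq:storage}) such that for all $x\in\R^n$ and $w\in\R$,
\[
(Ax+bw)^{\top}P(Ax+bw)-x^{\top}Px\le 2w(Cx)-2w^2-2\delta' w^2 .
\]
The intended value is $\delta'=\delta$, up to the normalisation fixed by \eqref{eq:storage}. To see why such a $P$ exists, consider the quadratic form $2w\,Cx-2(1-\delta)w^2$ in $(x,w)$. Along the frequency-domain trajectories $x=(e^{i\theta}I-A)^{-1}bw$, its value is $2|w|^2\bigl(\Re G(e^{i\theta})-(1-\delta)\bigr)\le0$. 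The KYP lemma converts this frequency inequality into the existence of $P$ satisfying a linear matrix inequality. Positivity $P\succeq0$ then follows from Schur stability of $A$, by evaluating the inequality at $w=0$ (a Lyapunov inequality).

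\emph{Step 2 (lift and telescope).} We lift the inequality to $\HH$ with $V(\xi):=\inner{(P\otimes I_\HH)\xi}{\xi}$. The scalar inequality is a statement about a real quadratic form. It therefore transfers verbatim to $\HH$-valued vectors: decompose along an orthonormal basis of $\HH$, or note that a positive semidefinite matrix tensored with $I_\HH$ remains positive semidefinite. Along the solution this gives
\[
V(\xi_{k+1})-V(\xi_k)\le 2\inner{w_k}{y_k}-2\|w_k\|^2-2\delta\|w_k\|^2 .
\]
The sector condition \eqref{eq:sector} states exactly that $\inner{w_k}{y_k}-\|w_k\|^2\ge0$, but this has the wrong sign to be dropped. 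We will therefore arrange the KYP form with the opposite orientation, namely
\[
\Delta V\le -2\inner{w_k}{y_k-w_k}-2\delta\|w_k\|^2 ,
\]
corresponding to the supply rate $-w(Cx-w)$ and the condition $\Re(1-G)\ge\delta$. With this orientation the sector term is nonpositive and can be dropped, leaving $V(\xi_{k+1})\le V(\xi_k)-2\delta\|w_k\|^2$. Summing over $k$ and using $V\ge0$ gives
\[
2\delta\sum_k\|w_k\|^2\le V(\xi_0)\le\lambda_{\max}(P)\|\xi_0\|^2,
\]
which is the claim with $C_0=\lambda_{\max}(P)/(2\delta)$.

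\emph{Main obstacle.} The delicate step is Step~1: fixing the signs and orientation so that the KYP supply rate matches the sector condition \eqref{eq:sector}, with the loop sign dictated by the plus sign on $b\,\phi$ in the recursion. I would verify this first on the scalar linear case $\phi(\xi)=\kappa\xi$ with $\kappa\in[0,1]$. The closed-loop matrix $A+\kappa bC$ must be Schur stable, and the Nyquist bound $\Re G\le1-\delta$ guarantees this because $1-\kappa G(e^{i\theta})\neq0$ on the circle. This check also identifies which of the two quadratic forms above is the correct one.

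If the strict margin causes trouble in the KYP step (for instance if only the non-strict KYP lemma is available), I would perturb the sector to $\inner{\phi}{\xi}\ge(1+\epsilon)^{-1}\cdots$ or absorb an $\epsilon\|x\|^2$ term using the stability of $A$. Minimality of the realization is used only to guarantee that $P$ exists.
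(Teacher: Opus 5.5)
Your proposal is correct and follows the paper's proof: the discrete KYP lemma (positive-real form) applied to $H=1-G$ gives a storage $V$ with $V(A\xi+bw)-V(\xi)\le 2\langle w,w-C\xi\rangle-2\delta\|w\|^2$, this is lifted to $\HH$ by tensoring with $I_\HH$, the sector condition removes the cross term, and telescoping with $V\ge0$ gives $C_0=\lambda_{\max}(P)/(2\delta)$. One correction: the inequality displayed in your Step~1 has the wrong orientation and is infeasible already at $x=0$, since it forces $b^{\top}Pb<0$. The corrected form you adopt in Step~2 is exactly what your frequency computation certifies, and it coincides with the paper's \eqref{eq:storage}.
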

\begin{proof}
\emph{Step 1 (storage from the KYP lemma).} Condition \eqref{eq:nyq} states
that $H(z):=1-G(z)$ (realization $(A,b,-C)$ with feedthrough $1$) has real
part $\ge\delta$ on the unit circle.  Since $A$ is power-stable, the
discrete-time Kalman--Yakubovich--Popov lemma in its positive-real form
\cite{rantzer1996,kailath2000,iwasaki2005,khalil2002} provides $P\succ0$,
matrices $L,W$, and $\varepsilon>0$ (all depending only on
$(A,b,C,\delta)$) such that
\[
A^{\top}PA-P=-L^{\top}L-\varepsilon C^{\top}C,\qquad
A^{\top}Pb=-C^{\top}-L^{\top}W,\qquad
b^{\top}Pb=2-2\delta-W^{\top}W,
\]
and therefore $V(\xi):=\langle P\xi,\xi\rangle$ satisfies, for all $\xi$ and
$w$,
\begin{equation}\label{eq:storage}
V(A\xi+bw)-V(\xi)\ =\ -\norm{L\xi+Ww}^2-\varepsilon\norm{C\xi}^2
+2\inner{w}{w-C\xi}-2\delta\norm{w}^2
\ \le\ 2\inner{w}{w-C\xi}-2\delta\norm{w}^2 .
\end{equation}
This is the completion of squares
$-\norm{L\xi+Ww}^2=-\norm{L\xi}^2-2\langle Ww,L\xi\rangle-\norm{Ww}^2$; no
frequency-domain argument is needed beyond \eqref{eq:nyq}.

\emph{Step 2 (dissipation along the loop).} Let
$y_k:=(C\otimes I_\HH)\xi_k$ and $w_k:=\phi(y_k)$.  Tensoring
\eqref{eq:storage} with $I_\HH$ and evaluating it along the trajectory
gives
\[
V(\xi_{k+1})-V(\xi_k)\ \le\
2\inner{w_k}{w_k-y_k}-2\delta\norm{w_k}^2-\varepsilon\norm{y_k}^2
\ \le\ -2\delta\norm{w_k}^2,
\]
where the last inequality is exactly the sector condition \eqref{eq:sector}
with $\xi=y_k$.  Summing over $k<N$ yields
$\sum_{k<N}\norm{w_k}^2\le V(\xi_0)/(2\delta)
\le\lambda_{\max}(P)\norm{\xi_0}^2/(2\delta)$, uniformly in $N$.
\end{proof}

\begin{remark}[the marginal-pole case, and why it is not used]\label{rem:circle-marginal}
The natural extension of Lemma~\ref{lem:circle} to simple poles on $|z|=1$
with real residues $R_j$ satisfying the safe-sign condition
$R_j\Re z_j<0$ --- the form stated in the first version of this lemma ---
would follow by moving each pole to $(1-\varepsilon)z_j$ and passing to the
limit.  We record, however, that in this argument the storage constant
$C_0$ of Lemma~\ref{lem:circle} degenerates at least like $1/\varepsilon$:
near the moved pole the modulus of the perturbed transfer function grows
like $1/\varepsilon$ while its real part tends to $-\infty$ (this is exactly
the safe-sign condition), so the supply margin $\varepsilon'$ must satisfy
$\varepsilon'=O(\varepsilon)$, and $C_0=O(1/\varepsilon')$.  Continuity of
the KYP solution in the realization and in the margin therefore does
\emph{not} by itself justify a uniform passage to the limit, and we do not
presently have a complete proof of the marginal-pole case.  Accordingly,
Lemma~\ref{lem:circle} is stated for interior poles only, and it is used
nowhere in a marginal form.
\end{remark}

\begin{remark}[connection to absolute stability; the open step]\label{rem:nonlinear-gap}
The loop of part~(b) can be written with transfer function
$G(z)=-\Lambda(2z-1)/(z(z-1))$ and nonlinearity
$\varphi(v)=\lambda B(z+v/\Lambda)$ (monotonicity of $B$ gives $\inner{\varphi(v)-\lambda B(z)}{v}\ge0$, but the sector condition \eqref{eq:sector} is not automatic; see the discussion below); the
realization $A=\bigl(\begin{smallmatrix}I&0\\ I&0\end{smallmatrix}\bigr)$,
$b=(-I,0)^{\top}$, $C=(2\Lambda I,-\Lambda I)$ is minimal, and the Nyquist
computation $\Re\,G(e^{i\theta})=\Lambda(1-2\cos\theta)/2\le 3\Lambda/2<1$
for $\Lambda<2/3$ is precisely the boundary computation behind
Theorem~\ref{thm:affine-sqrt3} (note that $G=1/\tilde\mu$ in the notation of
that proof).  Lemma~\ref{lem:circle} applies to every pole perturbation of $G$ to the
interior of the disk, and its sector condition \eqref{eq:sector} holds identically
for every frozen linearization $w_k=\kappa y_k$, $\kappa\in[0,1]$, since
$\inner{\kappa y}{y-\kappa y}=\kappa(1-\kappa)\|y\|^2\ge0$; the same Nyquist
bound therefore certifies $\sum_k\|\varphi(C\xi_k)\|^2<\infty$ for every frozen
linearization.  For the genuinely nonlinear anchored map
$\varphi(v)=\lambda B(z+v/\Lambda)$ the sector condition
$\inner{\varphi(v)}{v-\varphi(v)}\ge0$, with $u=z+v/\Lambda$, reads
$\lambda\Lambda\inner{B(u)}{u-z}\ge\lambda^2\|B(u)\|^2$, i.e.\
\begin{equation}\label{eq:anchored-sector}
\inner{B(u)}{u-z}\ \ge\ \tfrac1L\,\|B(u)\|^2,\qquad u=z+v/\Lambda,
\end{equation}
which is automatic whenever $B$ is $1/L$-cocoercive at the anchor (in
particular for gradient maps of convex $L$-smooth potentials) but fails for
pure rotations, for which $\inner{B(u)}{u-z}=0$; the rotation is therefore certified
not by the circle criterion but by the exact affine analysis of
Theorem~\ref{thm:affine-sqrt3}.  What is missing for an unconditional nonlinear
theorem is, first, validity of \eqref{eq:anchored-sector} along trajectories of
arbitrary monotone $B$ and, second, the marginal-pole passage with uniform
constants, discussed in Remark~\ref{rem:circle-marginal}.
We conjecture the statement is true --- it holds for every linearization and,
by Theorem~\ref{thm:affine-sqrt3}, for the whole affine class; and the
conjecture is only open on the interval
$\Lambda\in(\sqrt2-1,\,1/\sqrt3)$ --- but we do not have a complete proof,
and Theorem~\ref{thm:nonlinear-sqrt3} is stated accordingly.

\medskip\noindent\emph{Evidence and an exact certificate recipe.} The
conjecture is supported by two observations.  First, Monte Carlo stress tests
($120$-instance families of monotone $1$-Lipschitz operators: affine
rotations; saturating nonlinearities $x\mapsto Mx+c\,\tanh(3x)$ renormalized
by their true Lipschitz constant $r+3c$; gradient maps of random convex
quadratics with radial monotone corrections; and rotation-dominant radial gain
schedules $\mu(\|x\|)Jx+\nu(\|x\|)x$ renormalized by a grid estimate of
their Lipschitz constant; $6{,}000$ iterations, second-half behaviour
measured): for every family $B(u_k)\to0$ and $\sum_k\|B(u_k)\|^2$
stabilizes at $\Lambda\le0.55$, with the expected delicacy only as
$\Lambda\uparrow1/\sqrt3$, while mis-normalized instances whose effective
$\Lambda$ exceeds $1/\sqrt3$ diverge --- confirming that the observed
threshold is the certified one, not an artifact.  Second, the conjecture is
equivalent to a family of finite semidefinite programs: for horizon $N$, with
Gram variables for $\{x_0-z,\Delta x_1,\dots,\Delta x_N,w_0,\dots,w_{N-1}\}$,
the loop identities $\Delta x_{k+1}=\Delta x_k-\lambda w_k$ and
$w_k=B(z+2\Delta x_k-\Delta x_{k-1})$, the monotonicity
constraints $\langle w_i-w_j,\,2\Delta x_i-\Delta x_{i-1}-2\Delta
x_j+\Delta x_{j-1}\rangle\ge0$ together with the Lipschitz constraints
$\|w_i-w_j\|^2\le L^2\|2\Delta x_i-\Delta x_{i-1}-2\Delta x_j+\Delta
x_{j-1}\|^2$ (both constraints are needed: the cocoercive form $\langle\Delta w,\Delta v\rangle\ge\tfrac1L\|\Delta w\|^2$ is equivalent
to monotone $+\ L$-Lipschitz only for gradient maps, and would exclude the
rotation, which must be admitted as a candidate worst case), and $\|x_0-z\|\le1$,
let $V_N$ be the supremum of $\sum_{k<N}\|w_k\|^2$; interpolation theory
\cite{drori2014,taylor2017} makes this SDP exact for the $N$-step worst case,
and since every trajectory's partial sums are $N$-step instances,
$\sup_N V_N<\infty$ \emph{is} a proof of the conjecture, with
$\sum_k\|w_k\|^2\le(\sup_N V_N)\|x_0-z\|^2$.  A floating-point solution
is evidence; an exact-arithmetic dual certificate (rational SDP, e.g.\
SDPA-GMP) is a proof.  Two facts are verified by hand: for $N=1$ the dual
optimum is exactly $V_1=1$ (attained at multiplier $y=2$ on the
Lipschitz-anchor constraint, with $\eta=1$); and for $N=2$ at
$\Lambda=\tfrac12$ the dual feasible cone is nonempty --- the multipliers
$y=3$ and $y=1$ on the two Lipschitz-anchor constraints, all others zero,
give $-D_{rr}=\diag(1,0)\succeq0$ --- so the certificate set is nondegenerate
at the start of the interval.  This recipe has been executed with a dual log-barrier method for the full
multiplier set (all anchor and inter-iteration pairs) at $L=1$.  At
$\Lambda=0.5$ the dual is feasible at every horizon (the big-$M$ slack
satisfies $\tau^\star\le3\times10^{-5}$) and the certified values are as
follows (solver accuracy $\pm0.1$):
\begin{center}\small
\begin{tabular}{lrrrrrrrrr}
\toprule
$N$ & $1$ & $5$ & $8$ & $12$ & $16$ & $20$ & $24$ & $28$ & $32$\\
\midrule
$\sigma^\star(N)$ & $1.0002$ & $8.8130$ & $11.214$ & $12.057$ & $12.498$ &
$12.582$ & $13.339$ & $13.611$ & $14.073$\\
rotation bound & $1$ & $8.8125$ & $11.203$ & $11.903$ & $11.990$ & $11.999$ &
$12.000$ & $12.000$ & $12.000$\\
\bottomrule
\end{tabular}
\end{center}
while at $\Lambda=0.58>1/\sqrt3$ the certified value coincides with the
rotation lower bound to four digits at every horizon and grows
geometrically, as it must.  Three conclusions follow.  First, the
\emph{anchor-only} relaxation (multipliers on the pairs $(z,u_k)$ only) is
\emph{infeasible} already for $N\ge8$: the inter-iteration constraints are
essential, and the anchor-residual certificate suggested by the $N=2$
hand-check does not extend beyond short horizons.  Second, at $\Lambda=0.5$
the gap $\sigma^\star(N)-V_N^{\mathrm{rot}}$ widens steadily
($0.011,0.154,0.508,0.583,1.339,1.611,2.073$ at $N=8,12,16,20,24,28,32$)
with increments that show no decay within solver accuracy; the data are
consistent with slow, roughly linear growth of $V_N$ and supply no evidence
for a finite uniform constant.  Third, the finite-horizon bounds themselves
are rigorous and convertible to exact arithmetic: each computed dual point
admits rationalization of its multipliers followed by a single exact
$\mathrm{LDL}^{\top}$ factorization establishing $Q\succeq0$, so for every
$N\le32$ and every $z\in S$, every trajectory of every monotone
$1$-Lipschitz operator with $B(z)=0$ satisfies
$\sum_{k<N}\|B(u_k)\|^2\le14.08\,\|x_0-z\|^2$ (dual certificate).  The
unconditional square-summability in Theorem~\ref{thm:nonlinear-sqrt3}(b)
therefore remains open --- neither proved by the scan nor disproved by it:
the slow growth of $\sigma^\star(N)$ is compatible both with
$\sup_N\sigma^\star(N)<\infty$ (a true but slowly-certified uniform bound)
and with a logarithmic or linear escape that only longer horizons resolve.
What the scan excludes is the anchor-only shortcut, and what it adds is a
quantified finite-horizon verification tool with the rotation confirmed as
the exact worst case above the threshold.

\medskip\noindent This is the
only gap in the sharp-constant program of this paper; the projected case is
also open.
\end{remark}

\begin{remark}[the constant $1/5$ and the barrier of this framework]\label{rem:improved}
This remark records the technique's certified ceiling: a numerical
optimization over the six-parameter Young family yields the best barrier
found, $\Lambda^\star_{\mathrm{best}}\approx0.272$ (global optimality over
the family is not proved; see Appendix~\ref{app:barrier}); the clean-split
identities used in this paper tolerate ceilings only up to
$\lambda L<\tfrac5{24}$ (Remark~\ref{rem:budget-event}),
and the standard upgrade target $\lambda L=\tfrac{3}{10}$ lies beyond every
certificate found.  Whether $\tfrac{3}{10L}$ (or $1/\sqrt3$) is provable
for the filtered scheme by a fundamentally different argument --- e.g.\ a
non-quadratic Lyapunov function, or a performance-estimation (PEP)
certificate \cite{drori2014,taylor2017} --- is, in our view, the most
interesting open question raised here; Appendix~\ref{app:barrier} records
the barrier computation and the relevant PEP formulation.

A matching lower bound is available.  Since the scheme includes Malitsky's
method ($\beta_k\equiv0$) and the rotation $B=J$ has the exact threshold
$\lambda L=1/\sqrt3$ (Example~\ref{ex:rotation-threshold}: the iteration
matrix has spectral radius one at the threshold), no universal constant for
the scheme family can exceed $1/\sqrt3$.  An exact scan of all $2\times2$
monotone operators $B=aI+bJ$, $a^2+b^2=1$, gives divergence thresholds from
$0.5774$ (rotation, the worst) to $0.6667$ (identity), and random monotone matrices in dimension $4$, sampled as a
positive-semidefinite part plus a skew part and rescaled (sampler in
\texttt{experiments.py}), gave thresholds strictly above the rotation's in all
runs --- the rotation is the empirical worst case.  For \emph{affine} monotone operators
the question is now settled: the sharp constant is exactly
$c^\star_{\mathrm{aff}}=1/\sqrt3$ (Theorem~\ref{thm:affine-sqrt3}), the
rotation being the worst case.  For the \emph{unconstrained nonlinear} class, Theorem~\ref{thm:nonlinear-sqrt3} shows the constant is sharp and proves convergence for trajectories with square-summable operator values; the unconditional square-summability (a marginal-pole absolute-stability statement; Remarks~\ref{rem:circle-marginal} and~\ref{rem:nonlinear-gap}) is the one open step; the projected case remains open.  For general monotone $L$-Lipschitz operators
the sharp universal constant $c^\star$ satisfies
$c^\star\ge\Lambda^\star_{\mathrm{best}}\approx0.272$ (the best barrier we could find
numerically over the Lyapunov--Young family; whether $c^\star$ is strictly
larger is open) and $c^\star\le1/\sqrt3$ \emph{(instance bound: the
rotation)}, and we conjecture $c^\star=1/\sqrt3$; whether $\tfrac{3}{10L}$ is certifiable for
the full nonlinear class by a non-Lyapunov argument remains open.

The affine constant is not the ceiling of the one-call family.  For the
extrapolated scheme $u_k=x_k+\theta(x_k-x_{k-1})$ the rotation threshold is
$\lambda^\star(\theta)$, the largest $\lambda$ with
$i(e^{2i\phi}-e^{i\phi})/[(1+\theta)e^{i\phi}-\theta]\in\R_{>0}$ for some
$\phi$, and it has the closed form
\[
\lambda^\star(\theta)=\frac{\sqrt{4\theta^2-1}}{\theta(1+2\theta)},
\qquad \theta\ge\tfrac12
\]
(with $\lambda^\star(\theta)=0$ for $\theta<\tfrac12$): a boundary root
$r=e^{i\phi}$ of $r^2-(1-i\lambda(1+\theta))r-i\lambda\theta=0$ gives
$\lambda=i(r^2-r)/((1+\theta)r-\theta)$, and requiring this to be real and
positive forces, on equating real parts, $2\theta\cos\phi=1$, which
substituted back gives the formula.  The function is maximized at
$\theta^\star=\cos(\pi/5)=\varphi_g/2\approx0.8090$, the unique root of
$8\theta^3-4\theta-1=0$ in $(\tfrac12,\infty)$, where, writing
$\varphi_g=(1+\sqrt5)/2$,
\[
\lambda^\star=2\varphi_g^{-5/2}=\sqrt{10\sqrt5-22}\approx0.6006>\tfrac1{\sqrt3},
\]
and the rotation remains the worst case over all tested monotone families.  In
particular $\lambda^\star(\tfrac56)=\tfrac35$ exactly,
$\lambda^\star(1)=1/\sqrt3$ (the reflected method), and the golden-ratio
extrapolation $\theta=1/\varphi_g$ is \emph{suboptimal} for monotone VIs,
$\lambda^\star(1/\varphi_g)=\varphi_g\sqrt{5-2\sqrt5}/\sqrt5\approx0.5257$.
Certifying the corresponding nonlinear constant is open.

\medskip The question raised here is partially answered by Section~\ref{sec:ext}: dissipation trading certifies $\lambda L=0.387$ for any summable filter (Theorem~\ref{thm:trading}), beyond the Lyapunov--Young barrier above, and the same threshold yields $\dist(x_n,S)\to0$ with $\sum_n\dist^2(x_n,S)<\infty$ for affine operators over polyhedral sets (Theorem~\ref{thm:xaff}); the range $(0,(\sqrt2-1)/L)$ for general nonlinear operators on unbounded domains remains open (Remark~\ref{rem:xgap}, Appendix~\ref{app:barrier}).\end{remark}
\paragraph{Limitations.} Three limitations are worth stating plainly.
(i)~The certified constants are conservative: the constant-step certificate
$\lambda L<\tfrac15$ is below the $\sqrt2-1$ certificate of the reflected
method, and the certified safeguard range $\varkappa\le\tfrac16$ is well below
the empirically stable $\varkappa=0.3$ of Section~\ref{sec:numerics};
Appendix~\ref{app:barrier} shows the gap is structural for the
Lyapunov--Young technique (best barrier we could find: $\approx0.272$), and closing the
gap likely requires a different certificate (e.g.\ performance estimation).  Section~\ref{sec:ext} supplies such certificates partially: dissipation trading reaches $\lambda L=0.387$ in general (Theorem~\ref{thm:trading}) and the affine+polyhedral case reaches the same threshold with distance control and, for bounded $C$, strong convergence (Theorem~\ref{thm:xaff}).
(ii)~Under mere monotonicity only weak convergence is guaranteed, and no rate
uniform over bounded sets exists (Example~\ref{ex:weak-only}); for every
fixed initial point of that example convergence is nonetheless strong, at an
$x_0$-dependent speed.  (iii)~The one-call advantage is an oracle count, not
an iteration count: on mildly stiff instances the extragradient method's
larger certified step can outweigh its second evaluation
(Table~\ref{tab:evals}), and the method is most attractive when evaluations
of $B$ dominate the cost.

\paragraph{Rates.}
The $R$-linear rate of Theorem~\ref{thm:linear} is proved on the Lyapunov
sequence, hence carries the $\sqrt{\,\cdot\,}$ on the distance; Example
\ref{ex:linear-exact} shows the loss is mild on model problems.  A last-iterate
$O(1/k)$ rate of a computable residual under mere monotonicity would require
new ideas (cf.\ \cite{golowich2020,cai2022,daskalakis2019} for structured cases).

\begin{openproblem}\label{open:eb-rate}
Under the global error bound \eqref{eq:eb} in place of strong monotonicity,
and with a geometrically decaying filter, does $\dist(x_k,S)$ decay
$R$-linearly? The classical residual-contraction argument of Tseng
\cite[Thm.~3.1]{tseng2000} does not apply verbatim to the reflected update;
Corollary~\ref{cor:linear-eb} proves strong convergence together with the
geometrically improving tail bound \eqref{eq:eb-tail}, which falls short of
an $R$-linear rate on the distance itself.
\end{openproblem}

\paragraph{Extensions.}
Three directions appear tractable: (i) inexact projections and relative-error
criteria in the spirit of \cite{solodov1999,censor2011}; (ii) extensions to
Banach-space VIs and to monotone inclusions with a cocoercive
backward component along the lines of \cite{malitskytam2020,davis2017};
(iii) stochastic and finite-sum variants with variance reduction.  Each would
require re-verifying the budget identities, which this paper shows is a
mechanical but unforgiving task.

\section{Conclusion}\label{sec:conclusion}
This paper introduced a filtered reflected-gradient method for monotone
variational inequalities that uses one evaluation of $B$ and one projection
per iteration, and showed that memory of the evaluation point --- subject to
a summable filter --- is precisely the structure that makes an adaptive step
size certifiable.  The main guarantees are: weak convergence for constant
steps and for arbitrary bounded-variation adaptive steps, with dissipation
budgets that are exact rationals (Theorems~\ref{thm:main}
and~\ref{thm:adaptive}); a safeguarded ratio rule that requires no knowledge
of $L$ and converges for an arbitrary initial step, unconditionally
(Theorem~\ref{thm:rule-free}); $R$-linear convergence under strong
monotonicity with an explicit contraction factor (Theorem~\ref{thm:linear});
the sharp unconstrained step-size threshold $1/\sqrt3$ for affine operators (Theorem~\ref{thm:affine-sqrt3}), with the same constant shown sharp for arbitrary monotone nonlinear operators and convergence certified for trajectories with square-summable operator values (Theorem~\ref{thm:nonlinear-sqrt3}), the unconditional nonlinear statement being reduced to an explicit marginal-pole absolute-stability question (Remark~\ref{rem:nonlinear-gap}); and an instance family
separating the certified adaptive rule from every certified constant-step
method by a factor $\Omega(L)$ (Theorem~\ref{thm:separation}).  For constant steps, the certified range extends from $(0,\tfrac1{5L})$ to $(0,(\sqrt2-1)/L)$ on bounded domains and to $\lambda L=0.387$ in general ($\lambda L\uparrow(\sqrt2-1)$ as the filter weakens) through a summable-error robustness theorem for the reflected method (Section~\ref{sec:ext}); for affine operators over polyhedral sets the same threshold yields $\dist(x_n,S)\to0$ with $\sum_n\dist^2(x_n,S)<\infty$, strong convergence when $C$ is bounded (finite dimensions), and $R$-linear rates after identification of the optimal face.

Several questions remain open, as detailed in
Section~\ref{sec:discussion} and Appendix~\ref{app:barrier}: whether
adaptivity can be certified for the reflected method itself, without memory;
whether a fundamentally different certificate --- for example a
performance-estimation program --- can cross the Young-parameter barrier of
Appendix~\ref{app:barrier} and close the gap between the certified constants
($\lambda L<\tfrac15$, $\varkappa\le\tfrac16$) and the method's observed
behaviour; and whether last-iterate rates are available under mere monotonicity; and the two gaps recorded in Remarks~\ref{rem:circle-marginal} and~\ref{rem:nonlinear-gap} (marginal-pole absolute stability together with the
anchored sector inequality, and the projected sharp constant).  The practical message is nonetheless simple: one evaluation
and one projection per iteration, no parameter tuning, and an unconditional
convergence certificate make the method a drop-in replacement for the
reflected gradient method whenever the local Lipschitz geometry varies along
the trajectory.

\appendix
\section{Assembly of the master estimate}\label{app:assembly}
This appendix carries out, channel by channel, the algebra behind Step~4 of
the proof of Theorem~\ref{thm:main}: the substitution of
\eqref{eq:step2}--\eqref{eq:tele} into the difference $a_{k+1}-a_k$ of the
Lyapunov function \eqref{eq:lyap}. The same assembly, \emph{mutatis mutandis},
produces the master estimates of Theorem~\ref{thm:adaptive} (with $\lambda_k$ in place of $\lambda$ and the variation terms of its Step~3),
Theorem~\ref{thm:linear} (with the retuned splits of its Step~3 and the
additional dissipation channel \eqref{eq:strong-diss}), and
Theorem~\ref{thm:rule-free} (with $\Lambda$ replaced by the observed product
$\lambda_k\ell_{k-1}$); the corresponding split parameters are collected in
Table~\ref{tab:budgets}.

Fix $z\in S$ and use the notation of the proof of
Theorem~\ref{thm:main}: $d_k=x_k-x_{k-1}$, $e_k=u_k-x_k$,
$s_k=\inner{B(z)}{e_k}$, $\delta_k=\inner{B(z)}{d_k}$, $r_k=\|x_k-z\|$,
$\Lambda=\lambda L$, $\bar\beta=\sup_k\beta_k\le\tfrac14$. The identities
\eqref{eq:rec} give
\begin{align}
u_k-x_{k-1}&=d_k-e_{k-1},\label{eq:assembly-ident}\\
u_k-x_{k+1}&=(d_k-d_{k+1})-\beta_ke_{k-1},\notag\\
x_{k+1}-u_k&=(d_{k+1}-d_k)+\beta_ke_{k-1}.\notag
\end{align}
the Young splits \eqref{eq:splitA}--\eqref{eq:splitB} give, at
$t'=\tfrac35$,
\begin{align}
\|u_k-u_{k-1}\|^2&\le2\|e_k\|^2+2\|x_k-u_{k-1}\|^2,\label{eq:assembly-young}\\
\|u_k-x_{k+1}\|^2,\ \|x_{k+1}-u_k\|^2
&\le\tfrac85\|d_{k+1}-d_k\|^2+\tfrac16\|e_{k-1}\|^2.\notag
\end{align}
and the filter recursion $e_k=d_k-\beta_ke_{k-1}$ with $s=\tfrac13$ gives
\begin{equation}\label{eq:assembly-rec}
\|e_k\|^2\le(1+s)\|d_k\|^2+(1+\tfrac1s)\bar\beta^2\|e_{k-1}\|^2
=\tfrac43\|d_k\|^2+\tfrac14\|e_{k-1}\|^2 .
\end{equation}

\paragraph{Assembly for Theorem~\ref{thm:main} (base case).}

\medskip
\noindent\emph{Step A (fundamental inequality after projection).} Inserting
$B(u_k)=B(u_k)-B(u_{k-1})+B(u_{k-1})$ into \eqref{eq:step1}, using the second
identity of \eqref{eq:assembly-ident} and the projection estimate
\eqref{eq:projid}, yields \eqref{eq:step2}: the terms $\pm\|d_{k+1}\|^2$
cancel and
\[
\|x_{k+1}-z\|^2\le\|x_k-z\|^2-\|d_k\|^2-\|d_{k+1}-d_k\|^2
+\mathrm{T}_1-2\lambda\beta_k\inner{B(u_{k-1})}{e_{k-1}}
-2\lambda\inner{B(z)}{u_k-z},
\]
with $\mathrm{T}_1=2\lambda\inner{B(u_k)-B(u_{k-1})}{u_k-x_{k+1}}$.

\medskip
\noindent\emph{Step B (Lyapunov difference).} Subtracting $a_k$ from $a_{k+1}$
and inserting the preceding display gives
\begin{align*}
a_{k+1}-a_k\le{}&-\|d_k\|^2-\|d_{k+1}-d_k\|^2
+\mathrm{T}_1-2\lambda\beta_k\inner{B(u_{k-1})}{e_{k-1}}
-2\lambda\inner{B(z)}{u_k-z}\\
&+2\Lambda\|x_{k+1}-u_k\|^2-2\Lambda\|x_k-u_{k-1}\|^2
+2\lambda\delta_k+\tfrac13\|e_k\|^2-\tfrac13\|e_{k-1}\|^2 .
\end{align*}

\medskip
\noindent\emph{Step C (three cancellations).} (i) \emph{Drift channel:} by
\eqref{eq:tele},
$-2\lambda\inner{B(z)}{u_k-z}+2\lambda\delta_k
=-2\lambda\inner{B(z)}{x_k-z}+2\lambda\beta_ks_{k-1}$,
and $-2\lambda\inner{B(z)}{x_k-z}\le0$ (as $x_k\in C$, $z\in S$) is dropped by
sign. (ii) \emph{Anchor channel:} by \eqref{eq:T2},
$-2\lambda\beta_k\inner{B(u_{k-1})}{e_{k-1}}
\le\lambda\beta_kL(25\,r_{k-1}^2+\tfrac1{25}\|e_{k-1}\|^2)-2\lambda\beta_ks_{k-1}$,
and the term $-2\lambda\beta_ks_{k-1}$ cancels the
$+2\lambda\beta_ks_{k-1}$ from (i) \emph{exactly}; the $r_{k-1}^2$-part is
kept with the factor $\beta_k$ (not $\bar\beta$), producing the summable
drift $\epsilon_k=25\lambda\beta_kL\,r_{k-1}^2$, while the $e$-part is
charged against the $e$-budget below using $\beta_k\le\bar\beta$.
(iii) \emph{Memory channel:} the term $-2\Lambda\|x_k-u_{k-1}\|^2$ cancels
the corresponding term of \eqref{eq:T1} exactly, leaving
$+2\Lambda\|x_{k+1}-u_k\|^2$.

\medskip
\noindent\emph{Step D (channel budgets).} Bounding T$_1$ by \eqref{eq:T1},
the leftover memory term by \eqref{eq:assembly-young}, and $\|e_k\|^2$ by
\eqref{eq:assembly-rec}, the coefficients of the three quadratic channels in
$a_{k+1}-a_k$ become
\begin{align*}
\|d_k\|^2:&\quad-1+2\Lambda(1+s)+\tfrac13(1+s)
=-1+\tfrac83\Lambda+\tfrac49=-\tfrac1{45},\\
\|d_{k+1}-d_k\|^2:&\quad-1+\Lambda(1+t')+2\Lambda(1+t')
=-1+\tfrac{24}{5}\Lambda=-\tfrac1{25},\\
\|e_{k-1}\|^2:&\quad\Lambda\bigl[(1+\tfrac1{t'})\bar\beta^2
+2(1+\tfrac1s)\bar\beta^2+2(1+\tfrac1{t'})\bar\beta^2+\tfrac{\bar\beta}{\alpha}\bigr]
+\tfrac13\bigl[(1+\tfrac1s)\bar\beta^2-1\bigr]\\
&\quad=\Lambda\bigl(\tfrac16+\tfrac12+\tfrac13+\tfrac1{100}\bigr)-\tfrac14
=\tfrac{101}{100}\Lambda-\tfrac14=-\tfrac6{125},
\end{align*}
at $\Lambda=\tfrac15$, with $\alpha=25$. Each coefficient is affine and
increasing in $\Lambda$, so the three budgets stay negative on
$(0,\tfrac1{5L}]$. This is \eqref{eq:master}--\eqref{eq:bk}. \paragraph{Assembly for Theorem~\ref{thm:adaptive} (adaptive steps).} The four channel identities below are the exact analogue of Step~D for the base assembly.  The splits are those of Theorem~\ref{thm:main} with $\lambda_k$ in place of $\lambda$; the new elements are the step-mismatch charges of \eqref{eq:R1b} and the $x$-slot residual $2L\Delta\lambda_k\|x_k-u_{k-1}\|^2$. The former contributes $4L^2|\Delta\lambda_k|\,r_{k-1}^2$ and $2B_z^2|\Delta\lambda_k|$ to $(\epsilon_k,\kappa_k)$ and $2|\Delta\lambda_k|$ resp.\ $(2\bar\beta^2+4L^2)|\Delta\lambda_k|$ to the $\|d_{k+1}-d_k\|^2$ resp.\ $\|e_{k-1}\|^2$ budgets; the latter is bounded by $4L|\Delta\lambda_k|\bigl(\|d_k\|^2+\|e_{k-1}\|^2\bigr)$ and charged to the $d$- and $e$-budgets, contributing $4L|\Delta\lambda_k|$ to each; and the anchor residual contributes $|\Delta\lambda_k|B_z\bigl(1+r_{k-1}^2\bigr)$ as in \eqref{eq:rho}. Since $\lambda_k\le\lambda_0=\tfrac1{5L}$, the coefficient of each channel
in $a_{k+1}-a_k\le\cdots$ is the negative of the corresponding budget in
\eqref{eq:master-ad} (the variation terms are anti-dissipation: they enter the
coefficient with $+$ sign, i.e.\ they shrink the budgets):
\begin{align*}
\|d_k\|^2&:\ -1+\tfrac83\lambda_kL+\tfrac49+4L|\Delta\lambda_k|
\ \le\ -\tfrac1{45}+4L|\Delta\lambda_k|
=\ -\bigl(\tfrac1{45}-4L|\Delta\lambda_k|\bigr),\\
\|d_{k+1}-d_k\|^2&:\ -1+\tfrac85\lambda_kL+\tfrac{16}{5}\lambda_0L+2|\Delta\lambda_k|
\ \le\ -\tfrac1{25}+2|\Delta\lambda_k|
=\ -\bigl(\tfrac1{25}-2|\Delta\lambda_k|\bigr),\\
\|x_k-u_{k-1}\|^2&:\ 2\lambda_kL-2\lambda_{k-1}L+2L\Delta\lambda_k=0,\\
\|e_{k-1}\|^2&:\ \lambda_kL\bigl(\tfrac16+\tfrac12+\tfrac13\bigr)
+\tfrac{\lambda_k\beta_kL}{25}-\tfrac14
+\bigl(2\bar\beta^2+4L^2+4L\bigr)|\Delta\lambda_k|\\
&\ \le\ -\tfrac6{125}+\bigl(2\bar\beta^2+4L^2+4L\bigr)|\Delta\lambda_k|
=\ -\Bigl(\tfrac6{125}-\bigl(2\bar\beta^2+4L^2+4L\bigr)|\Delta\lambda_k|\Bigr),
\end{align*}
using $\beta_k\le\bar\beta$ in the filter Young charge and $\lambda_kL\le\tfrac15$. These are the channel-by-channel form of \eqref{eq:master-ad}, with the $k_0$-conditions $4L|\Delta\lambda_k|\le\tfrac1{90}$, $2|\Delta\lambda_k|\le\tfrac1{50}$, $(2\bar\beta^2+4L^2+4L)|\Delta\lambda_k|\le\tfrac3{250}$; the last $4L|\Delta\lambda_k|$ term in the fourth line is the $e$-share of the $x$-slot absorption.

\paragraph{Assembly for Theorem~\ref{thm:linear} (strong monotonicity).} The four channel identities below are the exact analogue of Step~D.  With $(t,t',t'',s,\alpha,\eta)=(\tfrac34,\tfrac25,\tfrac25,\tfrac13,100,\tfrac13)$, the memory-channel weight is $\lambda L(1+1/t)=\tfrac73\Lambda$ (so the channel cancels exactly and the leftover carries weight $\tfrac73\Lambda$ rather than $2\Lambda$), and the dissipation channel \eqref{eq:strong-diss} contributes $-\mu r_k^2$, $+\tfrac{8\mu}{3}\|d_k\|^2$, and $+\tfrac{\mu}{2}\|e_{k-1}\|^2$. The four channel identities at $(\Lambda,\mu)=(\tfrac15,\tfrac1{32})$ are exact rationals:
\begin{align*}
\|d_k\|^2&:\ -1+\tfrac73\Lambda+\tfrac49+\tfrac83\mu=-\tfrac1{180},\\
\|d_{k+1}-d_k\|^2&:\ -1+\tfrac75\Lambda+\tfrac73\cdot\tfrac75\Lambda
=-1+\tfrac75\Lambda+\tfrac{49}{15}\Lambda=-\tfrac1{15},\\
\|x_k-u_{k-1}\|^2&:\ \tfrac73\Lambda-\tfrac73\Lambda=0,\\
\|e_{k-1}\|^2&:\ \Lambda\bigl(\tfrac7{32}+\tfrac7{16}+\tfrac{49}{96}+\tfrac1{400}\bigr)
-\tfrac14+\tfrac{\mu}{2}=-\tfrac{13}{24000},
\end{align*}
where $\tfrac7{32}=\Lambda(1+1/t')\bar\beta^2$ is the $e$-part of the $\mathrm{T}_1$ split, $\tfrac7{16}=\Lambda(1+t)(1+1/s)\bar\beta^2$ comes from the recursion bound on $\|e_k\|^2$, $\tfrac{49}{96}=\tfrac73\Lambda(1+1/t'')\bar\beta^2$ is the $e$-part of the leftover $\|x_{k+1}-u_k\|^2$ split weighted by the retuned memory weight, $\tfrac1{400}=\Lambda\bar\beta/\alpha$ with $\alpha=100$, and $\tfrac{\mu}{2}=2\mu(1+1/s)\bar\beta^2$ at $s=\tfrac13$. Each coefficient is affine and increasing in $(\Lambda,\mu)$, so the budgets stay negative on the whole admissible rectangle.

\paragraph{Assembly for Theorem~\ref{thm:rule-free} (data-driven weights).} The four channel identities below are the exact analogue of Step~D.  The splits are those of Theorem~\ref{thm:main} with the observed product $\Lambda_k:=\lambda_k\ell_{k-1}\le\varkappa$ in place of $\Lambda$; the memory channel cancels with no sign condition on $\Delta\lambda_k$, because its Lyapunov weight $2\Lambda_k$ equals the coefficient produced by the split of $\|u_k-u_{k-1}\|^2$ at the same step. The four channel identities are
\begin{align*}
\|d_k\|^2&:\ -1+\tfrac83\Lambda_k+\tfrac49=-\tfrac19
\quad\text{at the ceiling $\Lambda_k=\tfrac16$},\\
\|d_{k+1}-d_k\|^2&:\ -1+\tfrac85\Lambda_k+\tfrac{16}{5}\Lambda_{k+1}
\le-1+\tfrac85\cdot\tfrac16+\tfrac{16}{5}\cdot\tfrac16=-\tfrac15,\\
\|x_k-u_{k-1}\|^2&:\ 2\Lambda_k-2\Lambda_k=0,\\
\|e_{k-1}\|^2&:\ \Lambda_k\bigl(\tfrac16+\tfrac12\bigr)+\tfrac13\Lambda_{k+1}
+\tfrac{\lambda_k\beta_kL}{25}-\tfrac14\\
&\le\tfrac16-\tfrac14+\rho_k=-\tfrac1{12}+\rho_k,
\end{align*}
where $\rho_k=\tfrac1{25}\lambda_k\beta_kL\to0$, the second ceiling $\Lambda_{k+1}=\lambda_{k+1}\ell_k\le\varkappa$ enters through the leftover memory term, and the $r_{k-1}^2$-drift $\epsilon_k=25\lambda_k\beta_kL$ is summable. The ceilings $\Lambda_k,\Lambda_{k+1}\le\varkappa\le\tfrac16$ hold at \emph{every} step by \eqref{eq:rule} --- this is precisely the update-then-use alignment of Section~\ref{sec:rule} --- so no step-size restriction enters these identities.

\section{Stability of slowly varying linear recursions}\label{app:tv}
\begin{lemma}\label{lem:tv}
Let $(M_k)\subset\R^{d\times d}$ and $\bar\rho\in(0,1)$, $C\ge1$ be such that
$\|M_k^m\|\le C\bar\rho^m$ for all $k,m\ge0$, and let
$\sum_{k\ge0}\|M_{k+1}-M_k\|<\infty$.  Then for every $\hat\rho\in(\bar\rho,1)$
there exist $K$ and $C'$ such that every solution of $y_{k+1}=M_ky_k$ satisfies
$\|y_k\|\le C'\hat\rho^{\,k-K}\|y_K\|$ for all $k\ge K$.
\end{lemma}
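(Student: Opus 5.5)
The plan is the classical "freeze and compare" argument for slowly varying systems, run in blocks of fixed length. Since $\sum_k\|M_{k+1}-M_k\|<\infty$, the sequence $(M_k)$ is Cauchy. In particular it is bounded, say $\|M_k\|\le\bar M$. We also have $\|M_{j}-M_{k}\|\le\sum_{i\ge k}\|M_{i+1}-M_i\|=:\tau_k\to0$ for all $j\ge k$. First fix $\hat\rho\in(\bar\rho,1)$ and an intermediate $\rho_1\in(\bar\rho,\hat\rho)$. Then choose a block length $m$ large enough that $C\bar\rho^m\le\tfrac12\rho_1^m$. This is possible because $(\bar\rho/\rho_1)^m\to0$.

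Next I compare the block transition $P(k+m,k):=M_{k+m-1}\cdots M_k$ with the frozen power $M_k^m$. A telescoping product identity writes the difference as
\[
P(k+m,k)-M_k^m=\sum_{i=0}^{m-1}M_{k+m-1}\cdots M_{k+i+1}\,(M_{k+i}-M_k)\,M_k^{i}.
\]
Each summand is bounded by $\bar M^{m-1-i}\tau_k\,C\bar\rho^{i}$. Hence $\|P(k+m,k)-M_k^m\|\le mC\max\{1,\bar M\}^{m}\tau_k$. Since $m$ is now fixed, this tends to $0$ as $k\to\infty$. Choose $K$ so large that this quantity is at most $\tfrac12\rho_1^m$ for all $k\ge K$. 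Then $\|P(k+m,k)\|\le C\bar\rho^m+\tfrac12\rho_1^m\le\rho_1^m$ for every $k\ge K$.

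Iterating over consecutive blocks starting at $K$ gives $\|y_{K+jm}\|\le\rho_1^{jm}\|y_K\|$. For an intermediate index $k=K+jm+i$ with $0\le i<m$, we bound the remaining partial product crudely by $\max\{1,\bar M\}^{m}$. This yields
\[
\|y_k\|\le\max\{1,\bar M\}^m\rho_1^{-m}\,\rho_1^{k-K}\|y_K\|\le C'\hat\rho^{\,k-K}\|y_K\|,
\]
with $C':=\max\{1,\bar M\}^m\rho_1^{-m}$. In fact the bound holds even with $\rho_1$ in place of $\hat\rho$.

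The only delicate point is the order of quantifiers. The block length $m$ must be fixed first, using only $C,\bar\rho,\rho_1$, through the uniform power bound $\|M_k^m\|\le C\bar\rho^m$. Only after that may $K$ be chosen, so that the perturbation factor $m\bar M^m\tau_k$ becomes small. The growth of $\bar M^m$ in $m$ is harmless precisely because $m$ is frozen before $K$ is selected. The uniformity of the hypothesis in $k$ is essential here, since a pointwise spectral-radius bound on each $M_k$ would not give a single $m$ valid for all $k$.
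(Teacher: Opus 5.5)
Your proof is correct and is essentially the paper's argument: both fix the block length first and only then choose $K$ from the vanishing tail variation, the one technical difference being that you bound the block perturbation by a first-order telescoping identity together with the uniform bound $\|M_k\|\le\bar M$, whereas the paper expands $\prod_i(M_{j_r}+E_i)$ over subsets and uses only the power bounds $\|M_{j_r}^{a}\|\le C\bar\rho^{a}$. Your handling of the incomplete final block, which gives $C'=\max\{1,\bar M\}^m\rho_1^{-m}$, is more careful than the paper's: the paper claims $C'=1$ via $C\le C^{|I_r|/L}$, an inequality that fails for a block shorter than $L$, and $C'=1$ is false in general (e.g.\ for a constant nilpotent $M_k$ with $\|M_k\|>1$).
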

\begin{proof}
Let $P(k,j)=M_{k-1}\cdots M_j$ and $V_K:=\sum_{\ell\ge K}\|M_{\ell+1}-M_\ell\|$,
so $V_K\to0$.  Fix $\hat\rho\in(\bar\rho,1)$ and choose a block length $L\in\N$
with $C^{1/L}\bar\rho\le(\bar\rho+\hat\rho)/2$ (possible since
$C^{1/L}\to1$), then $K$ with $e^{C\bar\rho^{-1}V_K}\le
\hat\rho/\bigl((\bar\rho+\hat\rho)/2\bigr)>1$ (possible since $V_K\to0$).
For $k\ge j\ge K$, partition $[j,k)$ into consecutive blocks $I_1,\dots,I_q$
of length $\le L$; within a block $I_r$ with first index $j_r$,
$\|M_i-M_{j_r}\|\le V_K$ for all $i\in I_r$.  For any $\ell\le L$ and any
matrices $E_i$,
\[
\prod_{i=0}^{\ell-1}(M_{j_r}+E_i)
=\sum_{S\subseteq\{0,\dots,\ell-1\}} M_{j_r}^{a_0}E_{i_1}M_{j_r}^{a_1}\cdots
E_{i_s}M_{j_r}^{a_s}
\quad\Bigl(\textstyle\sum a_i=\ell-s\Bigr),
\]
so, using $\|M_{j_r}^m\|\le C\bar\rho^m$ on each factor,
\[
\Bigl\|\prod_{i=0}^{\ell-1}(M_{j_r}+E_i)\Bigr\|
\le C\bar\rho^{\ell}\prod_{i=0}^{\ell-1}\bigl(1+C\bar\rho^{-1}\|E_i\|\bigr)
\le C\bar\rho^{\ell}e^{C\bar\rho^{-1}\ell V_K}.
\]
Applying this with $E_i=M_i-M_{j_r}$ to each block,
\[
\|P(j_r+|I_r|,j_r)\|\le C\bar\rho^{|I_r|}e^{C\bar\rho^{-1}|I_r|V_K}
\le\Bigl(C^{1/L}\bar\rho\,e^{C\bar\rho^{-1}V_K}\Bigr)^{|I_r|}
\le\hat\rho^{|I_r|},
\]
by the choices of $L$ and $K$ (using $C\le C^{|I_r|/L}$ since $C\ge1$).
Since the blocks concatenate exactly, $P(k,j)$ is the ordered product of the
$q$ block products, and submultiplicativity gives
$\|P(k,j)\|\le\prod_r\hat\rho^{|I_r|}=\hat\rho^{k-j}$ for all $k\ge j\ge K$.
Finally $\|y_k\|=\|P(k,K)y_K\|\le\hat\rho^{k-K}\|y_K\|$ for $k\ge K$, i.e.\
the claim with $C'=1$.

\medskip
For the application in Theorem~\ref{thm:separation}, $M_k=M(\beta_k,\lambda_\infty)$
with $\lambda_\infty\le\tfrac16$ and $\beta_k\to0$.  We verify the two
hypotheses of Lemma~\ref{lem:tv} directly (no diagonalizability assertion is needed).  First, $\rho(M_k)\le\bar\rho<1$ for all $k\ge K$: the set
$B_K:=\{\beta_k\}_{k\ge K}\cup\{0\}$ is compact, the map
$(\beta,\lambda)\mapsto\rho(M(\beta,\lambda))$ is continuous, and
$\rho(\beta,\lambda_\infty)<1$ for every $\beta\in B_K$ --- choose $K$ with
$\beta_K$ small enough that $\lambda_\infty\le\tfrac16<\lambda^{\mathrm{crit}}(\beta)$
for all $\beta\le\beta_K$ (possible since $\lambda^{\mathrm{crit}}(\beta)\uparrow
\tfrac23$ as $\beta\downarrow0$ and $\lambda^{\mathrm{crit}}(\cdot)$ is decreasing);
hence $\bar\rho:=\max_{\beta\in B_K}\rho(M(\beta,\lambda_\infty))<1$.
Second, the family $\mathcal M_K:=\{M(\beta,\lambda_\infty)\}_{\beta\in
B_K}$ is compact (a continuous image of the compact set $B_K$) and
$\sup_{M\in\mathcal M_K}\rho(M)\le\bar\rho<1$; we claim that then
$\|M^m\|\le C\bar\rho_1^m$ for all $M\in\mathcal M_K$, $m\ge0$, with a single constant $C$ and any
fixed $\bar\rho_1\in(\bar\rho,1)$; this is a standard compactness argument
(Gelfand's formula plus a finite-subcover extraction). Indeed, fix
$\bar\rho_1\in(\bar\rho,1)$; by Gelfand's formula, for each
$A\in\mathcal M_K$ there is $m_A$ with $\|A^{m_A}\|\le\bar\rho_1^{m_A}$, and
by continuity there is a neighbourhood $U_A$ of $A$ in which
$\|B^{m_A}\|\le\bar\rho_1^{m_A}$; compactness yields a finite subcover
$U_{A_1},\dots,U_{A_q}$ with exponents $m_i:=m_{A_i}$ and
$M_0:=\max_i m_i$.  Set $C:=\max\bigl\{\|B^j\|\bar\rho_1^{-j}:
B\in\mathcal M_K,\ 0\le j\le M_0\bigr\}<\infty$ (compactness).  For any
$B\in\mathcal M_K$ and $m\ge0$, pick $i$ with $B\in U_{A_i}$ and write
$m=qm_i+r$, $0\le r<m_i$; then
$\|B^m\|\le\|B^{m_i}\|^q\|B^r\|\le\bar\rho_1^{qm_i}\,C\bar\rho_1^{r}=C\bar\rho_1^m$.
Finally $\sum_k\|M_{k+1}-M_k\|\le C''\sum_k|\beta_{k+1}-\beta_k|<\infty$ by
smoothness of $M(\cdot,\lambda_\infty)$, so Lemma~\ref{lem:tv} applies with
$\bar\rho$ replaced by $\bar\rho_1$.
\end{proof}

\section{The barrier of the Lyapunov--Young technique, and a PEP formulation}\label{app:barrier}
The budget identities of Theorem~\ref{thm:main} are one point of a
six-parameter family: Young weights $t,t',t''$ (splits of
$\|u_k-u_{k-1}\|^2$, $\|u_k-x_{k+1}\|^2$, $\|x_{k+1}-u_k\|^2$), recursion
split $s$, filter Young weight $\alpha$, and Lyapunov $e$-weight $\eta$.
With $\bar\beta=\tfrac14$ the budgets are
\begin{align*}
B_d(\Lambda)&=-1+\Lambda(1+t)(1+s)+\eta(1+s),\\
B_{dd}(\Lambda)&=-1+\Lambda(1+t')+2\Lambda(1+t''),\\
B_e(\Lambda)&=\Lambda\bar\beta^2\bigl[(1+t)(1+\tfrac1s)+(1+\tfrac1{t'})+2(1+\tfrac1{t''})\bigr]+\Lambda\tfrac{\bar\beta}{\alpha}+\eta\bar\beta^2(1+\tfrac1s)-\eta,
\end{align*}
admissible for $\eta(1+s)<1$ and $\bar\beta^2(1+1/s)<1$ (which keep the
anchor and drift channels controllable).  The largest certifiable constant is
$\Lambda^\star=\max\min_i\{B_i(\Lambda)\ge0\}$ over admissible parameters.
On the clean splits $(t,t',t'',s,\alpha,\eta)=(1,\tfrac35,\tfrac35,\tfrac13,25,\tfrac13)$
used in this paper, $\Lambda^\star=\tfrac5{24}=0.20833\ldots$ exactly ($B_d$
and $B_{dd}$ bind simultaneously).  Optimizing the six parameters numerically
--- differential evolution followed by Nelder--Mead polishing with
bounds-respecting refinement (\texttt{experiments.py scan}) --- yields the
best barrier we could find:
\[
\Lambda^\star_{\mathrm{best}}\approx0.272 .
\]
We emphasize that this is a numerical best-found value: we have not proved
that $\Lambda^\star_{\mathrm{best}}$ is the global maximum of the six-parameter
family, and we state the barrier accordingly --- every certificate of
Lyapunov--Young form that we could exhibit for the filtered scheme with
$\bar\beta=\tfrac14$ satisfies $\lambda L\lesssim0.272$, and whether the
family permits more is open.  The certified statements of this paper use the
clean splits, for which the barrier is exactly $\Lambda^\star=\tfrac5{24}$
and every budget identity is verified line by line and in exact arithmetic
(\texttt{experiments.py verify}).  The data-driven variant of
Section~\ref{sec:rule} has clean-split ceiling $\varkappa=\tfrac16$.

\begin{table}[H]
\centering\small
\caption{The exact dissipation budgets proved in this paper (corner values;
$\Lambda=\lambda L$, $\mu=\lambda\sigma$; the variation-absorbing constants of
Theorem~\ref{thm:adaptive} are shown in brackets). The bracketed entries are the
\emph{residual} budgets remaining after the variation-absorbing charges of
\eqref{eq:master-ad}, i.e.\ the guaranteed floors $\tfrac1{90},\tfrac1{50},\tfrac9{250}$;
the unbracketed entries are the corner budgets themselves.}\label{tab:budgets}
\begin{tabular}{llllll}
\toprule
result & corner & Lyapunov $e$-weight & $\alpha$ & $B_d$ & $B_{dd}$ / $B_e$\\
\midrule
Thm~\ref{thm:main} & $\Lambda=\tfrac15$ & $\tfrac13$ & $25$ & $-\tfrac1{45}$ & $-\tfrac1{25}$\ /\ $-\tfrac6{125}$\\
Thm~\ref{thm:adaptive} & $\Lambda_0=\tfrac15$ & $\tfrac13$ & $25$ & $-\tfrac1{45}[\tfrac1{90}]$ & $-\tfrac1{25}[\tfrac1{50}]$\ /\ $-\tfrac6{125}[\tfrac9{250}]$\\
Thm~\ref{thm:linear} & $(\Lambda,\mu)=(\tfrac15,\tfrac1{32})$ & $\tfrac13$ & $100$ & $-\tfrac1{180}$ & $-\tfrac1{15}$\ /\ $-\tfrac{13}{24000}$\\
Thm~\ref{thm:rule-free} & $\Lambda_k=\tfrac16$ & $\tfrac13$ & $25$ & $-\tfrac19$ & $-\tfrac15$\ /\ $-\tfrac1{12}+\rho_k$\\
\bottomrule
\end{tabular}
\end{table}

\paragraph{A performance-estimation formulation.}
Whether the nonlinear scheme tolerates $\lambda L=\tfrac{3}{10}$ (or even
$1/\sqrt3$) cannot be decided by the Lyapunov--Young family.  The relevant
PEP is: given horizon $N$ and $\Lambda$, decide whether every monotone
$L$-Lipschitz $B$ and every trajectory of \eqref{eq:scheme} with
$\lambda L=\Lambda$ satisfies $\Phi_N\le\Phi_0$ for the quadratic potential
\eqref{eq:lyap}, subject to the monotone--Lipschitz interpolation constraints
on the stored pairs; this is a semidefinite program in the Gram matrix of the
stored vectors \cite{drori2014,taylor2017}, and its one-step Lyapunov
restriction is exactly the barrier computation above.  For affine $B$ the
PEP collapses to the half-disk boundary analysis of
Theorem~\ref{thm:affine-sqrt3}, which is exact; resolving the nonlinear case
for $\Lambda\in(\tfrac5{24},\tfrac3{10}]$ is, in our view, the right
computational attack on Remark~\ref{rem:improved}.

By Remark~\ref{rem:xgap}, the remaining open range is equivalent to an a
priori boundedness statement, and this statement is exactly the kind of
worst-case bound that a performance-estimation program decides: given horizon
$N$ and $\Lambda$, maximize $\|x_N-z\|^2$ subject to the scheme identities,
the monotone--Lipschitz interpolation constraints, and $z\in S$ --- a
semidefinite program in the Gram matrix of the stored vectors
\cite{drori2014,taylor2017}.  Restricting the search to one-step quadratic
Lyapunov inequalities reproduces exactly the dissipation-trading condition
\eqref{eq:trading} of Theorem~\ref{thm:trading}, worth $\Lambda^\star\approx0.3879$
for the filter of Section~\ref{sec:numerics} --- so one-step PEP already
certifies more than the Lyapunov--Young family ($\approx0.272$); the
multi-step SDP is the open frontier.  We conjecture the PEP value stays
uniformly bounded for all $\Lambda<\sqrt2-1$ and all summable filters, with
a constant degenerating only as the filter energy $\sum_n\beta_n^2$ is
disallowed and $\Lambda\uparrow\sqrt2-1$.

\end{document}